\newtheorem{theorem}{Theorem}[section]
\newtheorem{lemma}[theorem]{Lemma}
\newtheorem{remark}[theorem]{Remark}
\def\xxint#1#2#3{{\setbox0=\hbox{$#1{#2#3}{\int}$}
  \vcenter{\hbox{$#2#3$}}\kern-.5\wd0}}
\def \RD {{\mathbb R}^d}
\def\varep{\varepsilon}
\newcommand{\average}{-\!\!\!\!\!\!\int}
\begin{document}

\title
{\bf Uniform Regularity Estimates \\ in Parabolic Homogenization}

\author{Jun Geng\thanks{Supported in part by the NNSF of China (11201206) and
Fundamental Research Funds for the Central Universities (LZUJBKY-2013-7).}
\and Zhongwei Shen\thanks{Supported in part by NSF grant DMS-1161154.}}

\date{ }

\maketitle

\begin{abstract}
We consider a family of second-order parabolic systems in divergence form
 with rapidly oscillating and time-dependent coefficients, arising in the theory of homogenization.
We obtain uniform interior $W^{1,p}$, H\"older, and Lipschitz estimates as well  as boundary $W^{1,p}$ and
H\"older estimates, using compactness methods.
As a consequence, we  establish uniform $W^{1,p}$ estimates for the initial-Dirichlet problems 
in $C^{1}$ cylinders.



 \end{abstract}

 \medskip


\section {Introduction and statement of main results}
\setcounter{equation}{0}

The primary purpose of this paper is to study uniform regularity estimates
for a family of second-order parabolic systems in divergence form with rapidly oscillating and time-dependent coefficients, 
arising in the theory of homogenization.
We obtain uniform interior $W^{1,p}$, H\"older, and Lipschitz estimates as well as boundary $W^{1,p}$ and
H\"older estimates, using  compactness methods.
We also establish uniform $W^{1,p}$ estimates for  the
initial-Dirichlet problems in $C^{1}$ cylinders.

More precisely, we consider the family of the parabolic systems
\begin{equation}\label{1-1}
\big(\partial_t +\mathcal{L}_\varep\big) u_\varep =F, \quad \varep>0,
\end{equation}
where
\begin{equation}\label{1-2}
\mathcal{L}_\varepsilon=-{\rm div}\left[A\big({x}/{\varepsilon}, {t}/{\varep^2}\big)\nabla\right]
=
-\frac{\partial}{\partial x_i}
\Big[a_{ij}^{\alpha\beta}\left(\frac{x}{\varepsilon}, \frac{t}{\varep^2}\right)
\frac{\partial}{\partial x_j}\Big]
\end{equation}
(the summation convention on repeated indices is used throughout the paper).
We will assume that the coefficient matrix  
$A(y,s)=\big(a_{ij}^{\alpha\beta}(y,s)\big)$ with $1\le i, j\le d$ and $1\le \alpha, \beta\le m$
is  real,  bounded, measurable, and satisfies the ellipticity condition,
\begin{equation}\label{e1.3}
\mu|\xi|^2\leq a_{ij}^{\alpha\beta}(y,s)\xi_i^{\alpha}\xi_j^\beta\leq \frac{1}{\mu}|\xi|^2 
            \qquad  \text{\rm for any }  \ (y,s)\in \mathbb{R}^{d+1}, \xi=(\xi_i^\alpha)\in \mathbb{R}^{dm},
\end{equation}
where $\mu>0$,
 and the periodicity condition,
\begin{align}\label{e1.4}
A(y+z, t+s)=A(y,t)  \qquad {\rm for} \ (y,t)\in \mathbb{R}^{d+1} \ {\rm and}\ (z,s)\in {\mathbb Z}^{d+1}.
\end{align}
Some  smoothness conditions on $A$ are also needed.

For $(x_0, t_0)\in \mathbb{R}^{d+1}$ and $r>0$, let
$$
Q_r (x_0, t_0) = B(x_0, r) \times (t_0-r^2, t_0),
$$
where $B(x_0, r)=\big\{ x\in \mathbb{R}^d: \, |x-x_0|<r\big\}$.
If $Q=Q_r(x_0, t_0)$ and $\alpha>0$, we will use $\alpha Q$ to denote $Q_{\alpha r} (x_0, t_0)$.
The following two theorems give the interior $W^{1,p}$,
H\"older, and Lipschitz  estimates, which are dilation invariant and uniform in $\varep>0$.

\begin{theorem}\label{main-theorem-1}
Let $A=A(x,t)$ be a matrix satisfying the ellipticity and periodicity conditions (\ref{e1.3})-(\ref{e1.4}).
Assume that $A\in VMO_x$.
Let $u_\varep$ be a weak solution of $\big(\partial_t +\mathcal{L}_\varep\big) u_\varep =\text{\rm div} (f)$
in $2Q$, where $Q=Q_r(x_0, t_0)$ and $f=(f_i)\in L^p (2Q)$ for some $2<p<\infty$.
Then
\begin{equation}\label{1.1-1}
\left(\average_{Q} |\nabla u_\varep|^p\right)^{1/p}
\le C_p \left\{ \frac{1}{r} \left(\average_{2Q} |u_\varep|^2\right)^{1/2}
+\left(\average_{2Q} |f|^p \right)^{1/p} \right\},
\end{equation}
where $C_p$ depends at most on $d$, $m$, $p$, and $A$.
Moreover, if $p>d+2$ and $\alpha =1-\frac{d+2}{p}$, then
\begin{equation}\label{1.1-2}
\| u_\varep\|_{C^{\alpha, \alpha/2} (Q)}
\le  
C \, r^{1-\alpha} \left\{ \frac{1}{r} \left(\average_{2Q} | u_\varep|^2\right)^{1/2}
+\left(\average_{2Q} |f|^p \right)^{1/p} \right\}.
\end{equation}
\end{theorem}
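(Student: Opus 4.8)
The plan is to prove Theorem~\ref{main-theorem-1} by the compactness method of Avellaneda--Lin, adapted to the parabolic setting. The argument rests on two ingredients: a ``one-step improvement'' lemma, obtained from compactness, which says that at a single (fixed) scale the solution $u_\varep$ is $L^2$-close to a solution of the homogenized (constant-coefficient) parabolic system $\partial_t u_0 + \mathcal{L}_0 u_0 = \mathrm{div}(f)$, together with the classical interior regularity (in particular $W^{1,p}$ and $C^{1,1/2}$ Schauder-type estimates) for that limiting operator. The plan is first to establish a Lipschitz (i.e. $\nabla u_\varep \in L^\infty$ locally) estimate of the form
\begin{equation*}
\left(\average_{Q_r}|\nabla u_\varep|^2\right)^{1/2}
\le C\left\{\frac1{r}\left(\average_{Q_{2r}}|u_\varep|^2\right)^{1/2}
+ \sup_{0<\rho\le 2r}\left(\average_{Q_\rho}|f - \overline{f}_\rho|^p\right)^{1/p}\right\}
\end{equation*}
valid for $0<r<1$, and then to combine it with standard (non-uniform, $\varep$-independent after rescaling to the unit scale) interior $W^{1,p}$ and Hölder estimates for $\partial_t + \mathcal{L}_\varep$ with VMO$_x$ coefficients to obtain (\ref{1.1-1}) and, via Morrey/Campanato embedding, (\ref{1.1-2}).

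The key steps, in order, are as follows. First, normalize by translation and dilation so that $Q = Q_1(0,0)$ and $2Q = Q_2$; by subtracting constants and rescaling $u_\varep$ we may assume $\average_{2Q}|u_\varep|^2 \le 1$ and the right-hand side data are suitably normalized. Second, prove the approximation lemma: there exists $\varep_0 > 0$ and, for each small $\theta \in (0,1/4)$, a constant $C$ so that if $0 < \varep < \varep_0$ and $u_\varep$ solves the equation in $Q_2$ with $\average_{Q_2}|u_\varep|^2 \le 1$ and the data small, then
\begin{equation*}
\average_{Q_\theta}|u_\varep - u_0|^2 \le C\,\theta^{2+2\sigma}
\end{equation*}
for some $\sigma \in (0,1)$, where $u_0$ solves the homogenized system; this follows by contradiction using $L^2$ compactness of bounded weak solutions (Caccioppoli plus the Aubin--Lions lemma for the $\partial_t$ term) and the qualitative homogenization theorem for parabolic systems. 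Third, iterate this at dyadic scales $\theta^k$: choosing $\theta$ small enough that $C\theta^{2\sigma} < 1$ and using the $C^{1,1/2}$ interior estimate for $u_0$, one obtains a geometric decay of the excess quantity $\left(\average_{Q_r}|\nabla u_\varep - (\nabla u_\varep)_{Q_r}|^2\right)^{1/2} + \text{(data term)}$, which after summation yields the Lipschitz estimate above down to scale $\varep$; for scales below $\varep$, blow-up reduces matters to the smooth (in the rescaled variable) coefficient case and one applies classical interior estimates. Fourth, feed the Lipschitz bound into a real-variable argument (a Calderón--Zygmund-type $L^p$ theorem, e.g. the $L^p$ extrapolation lemma based on good-$\lambda$ inequalities) together with the $L^p$ theory for parabolic operators with VMO$_x$ coefficients at the unit scale to upgrade to $L^p$ and obtain (\ref{1.1-1}). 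Finally, (\ref{1.1-2}) follows from (\ref{1.1-1}) for $p > d+2$ by the parabolic Sobolev--Morrey embedding $W^{1,p}(Q) \hookrightarrow C^{\alpha,\alpha/2}(Q)$ with $\alpha = 1 - (d+2)/p$, keeping track of the scaling factor $r^{1-\alpha}$.

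The main obstacle I expect is the approximation (homogenization) lemma in Step~2 and its careful quantitative use in the iteration of Step~3. The compactness argument requires a qualitative parabolic homogenization theorem --- that weak solutions $u_\varep$ of $(\partial_t + \mathcal{L}_\varep)u_\varep = \mathrm{div}(f_\varep)$, bounded in the natural energy space, converge (up to a subsequence) weakly to a solution $u_0$ of the constant-coefficient homogenized system --- and one must be careful that the convergence is strong enough in $L^2_{loc}$ to pass to the limit in the $L^2$ excess; this is where the time-dependence of the coefficients and the parabolic scaling $t/\varep^2$ genuinely enter and require the Aubin--Lions compactness for the pair $(u_\varep, \partial_t u_\varep)$. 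A secondary technical point is the treatment of scales below $\varep$: there the oscillating coefficients $A(x/\varep, t/\varep^2)$, while only VMO$_x$, are handled by rescaling $x \mapsto \varep x$, $t \mapsto \varep^2 t$, which converts the problem to one at unit scale with coefficients $A(x, t)$, to which the (scale-invariant, $\varep$-free) VMO$_x$ parabolic $W^{1,p}$/Hölder theory applies directly; assembling the two regimes into a single clean estimate down to scale $r$ requires some bookkeeping but no new idea.
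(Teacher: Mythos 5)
Your overall framework --- a compactness approximation lemma, dyadic iteration down to scale $\varepsilon$, blow-up below scale $\varepsilon$, and a Calder\'on--Zygmund/good-$\lambda$ argument to reach $L^p$ --- is the same shape as the paper's. Two of your steps, however, would fail as written. First, the excess quantity you propose to iterate, $\left(\average_{Q_r}|\nabla u_\varepsilon-(\nabla u_\varepsilon)_{Q_r}|^2\right)^{1/2}$, cannot decay geometrically in homogenization: $\nabla u_\varepsilon$ oscillates at period $\varepsilon$ through the correctors, so this quantity stays of order one as $r\downarrow\varepsilon$. The object one must iterate (the Avellaneda--Lin device, used here in Lemmas \ref{lemma-5.1}--\ref{lemma-5.2} and Remark \ref{remark-5.1}) is the oscillation of $u_\varepsilon$ \emph{after subtracting a corrector-corrected linear profile}, $u_\varepsilon(x,t)-u_\varepsilon(0,0)-\{P_\ell^\beta(x)+\varepsilon\chi_\ell^\beta(x/\varepsilon,t/\varepsilon^2)\}E_\ell^\beta$, because the $\varepsilon\chi$ term soaks up exactly the period-$\varepsilon$ oscillation of the gradient. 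Relatedly, your Step 2 compares $u_\varepsilon$ with the homogenized solution $u_0$ in $L^2$; but $\nabla u_\varepsilon$ and $\nabla u_0$ remain order-one apart even when $u_\varepsilon-u_0$ is small, so without the corrector in the ansatz the iteration does not close at the gradient level.

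Second, below scale $\varepsilon$ you write that blow-up ``reduces matters to the smooth coefficient case.'' It does not: rescaling $x\mapsto\varepsilon x$, $t\mapsto\varepsilon^2 t$ only removes the $\varepsilon$-dependence, and the rescaled coefficients $A(x,t)$ are still merely $VMO_x$, under which the fixed-scale operator $\partial_t+\mathcal{L}_1$ admits local $W^{1,q}$ estimates for $q<\infty$ (Byun, Krylov) but \emph{not} $W^{1,\infty}$ --- this is precisely why the paper's Lipschitz estimate, Theorem \ref{main-theorem-2}, requires H\"older-continuous $A$, whereas Theorem \ref{main-theorem-1} assumes only $VMO_x$. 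The correct intermediate goal is therefore not a Lipschitz bound but the scale-invariant reverse H\"older inequality for local \emph{homogeneous} solutions, $\left(\average_Q|\nabla u_\varepsilon|^q\right)^{1/q}\le C r^{-1}\left(\average_{2Q}|u_\varepsilon|^2\right)^{1/2}$ for all $2<q<\infty$ (Theorem \ref{theorem-10.1}), obtained by combining the $VMO_x$ version of the corrector iteration (Remark \ref{remark-5.1}) at scales $\ge\varepsilon$ with the $VMO_x$ $W^{1,q}$ theory at scales $\le\varepsilon$. That reverse H\"older inequality is exactly what the real-variable lemma (Theorem \ref{real-variable-theorem}) needs, with $F_Q=\nabla v_\varepsilon$ and $R_Q=\nabla(u_\varepsilon-v_\varepsilon)$ where $v_\varepsilon$ solves the inhomogeneous problem with zero data on $4Q$; once you have it, your Steps 4 and 5 carry through, with the caveat that (\ref{1.1-2}) is not a bare Sobolev--Morrey embedding: one needs the equation, via the parabolic Poincar\'e inequality (Lemma \ref{Poincare-lemma}), to control the time oscillation, since $\nabla u_\varepsilon\in L^p$ alone does not control $\partial_t u_\varepsilon$.
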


\begin{theorem}\label{main-theorem-2}
Let $A=A(x,t)$ be a matrix satisfying the ellipticity and periodicity conditions (\ref{e1.3})-(\ref{e1.4}).
Assume that $A$ is H\"older continuous in $(x,t)$.
Let $u_\varep$ be a weak solution of $\big(\partial_t +\mathcal{L}_\varep\big) u_\varep =F$
in $2Q$, where $Q=Q_r(x_0, t_0)$ and $F\in L^p (2Q)$ for some $p>d+2$.
Then
\begin{equation}\label{1.2-1}
\| u_\varep\|_{C^{1, 1/2}(Q)}
\le C_p\left\{ \frac{1}{r} \left( \average_{2Q} |u_\varep|^2\right)^{1/2}
+r \left(\average_{2Q} |F|^p\right)^{1/p} \right\},
\end{equation}
where $C_p$ depends at most on $d$, $m$, $p$, and $A$.
\end{theorem}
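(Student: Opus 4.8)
The strategy is the classical Avellaneda–Lin compactness scheme, adapted to the parabolic setting and building on the already-established interior Hölder bound (Theorem \ref{main-theorem-1}, estimate (\ref{1.1-2})). First I would reduce to the homogeneous case: writing $F\in L^p(2Q)$ with $p>d+2$, one can subtract off a particular solution of the inhomogeneous heat-type equation with the constant (homogenized) coefficients, or more simply absorb the right-hand side into the iteration by tracking an extra additive term of size $r\big(\fint_{2Q}|F|^p\big)^{1/p}$ at each scale; the scaling $\partial_t+\mathcal L_\varepsilon$ is parabolic, so under $x\mapsto rx$, $t\mapsto r^2 t$ the $\varepsilon$ becomes $\varepsilon/r$, and it suffices to prove a ``one-step improvement'' lemma at unit scale. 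By translation and dilation we may assume $Q=Q_1=Q_1(0,0)$.

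The heart of the argument is a \emph{Campanato-type iteration}. The key lemma states: there exist constants $\theta\in(0,1/4)$ and $\varepsilon_0>0$, and $C>0$, such that if $u_\varepsilon$ solves $(\partial_t+\mathcal L_\varepsilon)u_\varepsilon=F$ in $Q_1$ with $0<\varepsilon<\varepsilon_0$ and suitable normalization $\big(\fint_{Q_1}|u_\varepsilon|^2\big)^{1/2}\le 1$, $\big(\fint_{Q_1}|F|^p\big)^{1/p}\le 1$, then there is a linear function (in $x$) $\ell(x)=E+P\cdot x$ such that
\begin{equation*}
\left(\average_{Q_\theta} |u_\varepsilon - \ell|^2\right)^{1/2} \le \theta^{1+\sigma}\left\{ \left(\average_{Q_1}|u_\varepsilon|^2\right)^{1/2} + \left(\average_{Q_1}|F|^p\right)^{1/p}\right\}
\end{equation*}
for some $\sigma\in(0,1)$ (one can take $\sigma=1/2$ for the $C^{1,1/2}$ conclusion, or any $\sigma<1-\frac{d+2}{p}$). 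This is proved by contradiction and compactness: if it failed there would be sequences $\varepsilon_k\to 0$, $A_k$ satisfying (\ref{e1.3})–(\ref{e1.4}) and Hölder with uniform constants, $u_k$ solving $(\partial_t+\mathcal L_{\varepsilon_k,k})u_k=F_k$, normalized, with no good approximating linear function. Using the uniform Hölder estimate (\ref{1.1-2}) together with the Caccioppoli inequality, $\{u_k\}$ is bounded in $C^{\alpha,\alpha/2}(Q_{1/2})$ and in $L^2(0,T;H^1)$; extracting a subsequence, $u_k\to u_0$ uniformly on compact subsets and weakly in $H^1_{loc}$, and $A_k(\cdot/\varepsilon_k,\cdot/\varepsilon_k^2)$ H-converges to the constant homogenized matrix $\widehat A$, so $u_0$ solves $(\partial_t+\mathcal L_0)u_0=F_0$ with constant coefficients. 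By classical interior $C^{1,1/2}$ (indeed $C^\infty_x$, $C^1_t$) regularity for constant-coefficient parabolic systems, $u_0$ is well approximated near $(0,0)$ by its first-order Taylor polynomial $\ell_0(x)=u_0(0,0)+\nabla_x u_0(0,0)\cdot x$, and choosing $\theta$ small (depending only on $d,m,\sigma$, and the regularity constant for $\mathcal L_0$) forces $\big(\fint_{Q_\theta}|u_0-\ell_0|^2\big)^{1/2}\le \tfrac12\theta^{1+\sigma}$; since $u_k\to u_0$ in $L^2(Q_\theta)$ and the candidate linear functions can be taken as the Taylor polynomials of (smoothed) $u_k$, we reach a contradiction for large $k$.

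Given the one-step lemma, the iteration proceeds as usual: for $\varepsilon<\varepsilon_0$, rescaling $u_\varepsilon$ at scale $\theta^j$ keeps the rescaled coefficient's oscillation parameter $\varepsilon/\theta^j$ small \emph{until} $\theta^j\approx \varepsilon$, so one applies the lemma $j_0\sim \log(1/\varepsilon)/\log(1/\theta)$ times to produce linear functions $\ell_j$ with $|P_{j+1}-P_j|$ summable and geometric decay of $\big(\fint_{Q_{\theta^j}}|u_\varepsilon-\ell_j|^2\big)^{1/2}\lesssim \theta^{j(1+\sigma)}$; this yields the bound $\|\nabla u_\varepsilon\|_{L^\infty(Q_{\varepsilon})}$ and a Campanato-type estimate on all scales $\ge\varepsilon$. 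For scales below $\varepsilon$, one rescales so that the coefficients become simply $A(y,s)$ (with $\varepsilon=1$), where the standard Schauder estimate for parabolic systems with Hölder coefficients (which holds non-uniformly but is all that is needed at a fixed scale) gives the $C^{1,1/2}$ control. Patching the two ranges, together with the $L^2$–$L^p$ tradeoff to absorb $F$, yields (\ref{1.2-1}). The case $\varepsilon\ge\varepsilon_0$ is immediate from the fixed-coefficient Schauder theory after rescaling. The main obstacle is the compactness step: one must justify the H-convergence of the time-dependent oscillating operators to a constant-coefficient limit and the qualitative convergence $u_k\to u_0$ with $u_0$ a weak solution of the homogenized system — this requires the parabolic homogenization apparatus (correctors $\chi(y,s)$ solving cell problems on $\mathbb{T}^{d+1}$, the div-curl lemma in space-time) and careful treatment of the parabolic scaling in the Caccioppoli and Hölder bounds.
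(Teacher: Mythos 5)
Your plan diverges from the paper's route, and as written it has a genuine gap in the iteration step.

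The paper proceeds in two stages: it first proves the homogeneous Lipschitz estimate (Theorem~\ref{theorem-5.1}) by the Avellaneda--Lin compactness scheme, and then reduces the inhomogeneous case $\big(\partial_t+\mathcal{L}_\varep\big)u_\varep=F$ to the homogeneous one via the matrix of fundamental solutions $\Gamma_\varep$ for $\partial_t+\mathcal{L}_\varep$, using the uniform estimates $|\nabla_x\Gamma_\varep|\lesssim(|x-y|+|t-s|^{1/2})^{-d-1}$ and $|\nabla_x\nabla_y\Gamma_\varep|\lesssim(|x-y|+|t-s|^{1/2})^{-d-2}$, which are themselves consequences of the homogeneous Lipschitz estimate. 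You instead propose to absorb $F$ directly into the compactness iteration, which is a different route. The more substantive difference, however, is in the approximating objects: in Lemmas~\ref{lemma-5.1} and~\ref{lemma-5.2} the paper approximates $u_\varep$ by the \emph{corrected} affine function $\{P_\ell^\beta(x)+\varep\chi_\ell^\beta(x/\varep,t/\varep^2)\}E_\ell^\beta$ (plus constants), not by a pure affine function $\ell(x)=E+P\cdot x$ as you do. This is not a cosmetic choice: because $\big(\partial_t+\mathcal{L}_\varep\big)\big(P_\ell^\beta+\varep\chi_\ell^\beta(\cdot/\varep,\cdot/\varep^2)\big)=0$ by (\ref{corrector-equation}), subtracting this corrected affine function from $u_\varep$ produces again a solution of the homogeneous equation, and the induction in Lemma~\ref{lemma-5.2} proceeds cleanly.

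With a pure affine $\ell_j(x)=E_j+P_j\cdot x$ the step ``apply the lemma to produce $\ell_{j+1}$ with $|P_{j+1}-P_j|$ summable'' is not justified as stated. To get geometric decay of $\big(\fint_{Q_{\theta^j}}|u_\varep-\ell_j|^2\big)^{1/2}$ you must apply the one-step lemma to a rescaling of $u_\varep-\ell_j$, not of $u_\varep$ itself (otherwise the decay factor does not compound). But $u_\varep-\ell_j$ does not solve the hypothesized equation: it solves
$\big(\partial_t+\mathcal{L}_\varep\big)\big(u_\varep-\ell_j\big)=F+\operatorname{div}\big(A(x/\varep,t/\varep^2)P_j\big)$,
and after the parabolic rescaling to unit scale the extra divergence term does not disappear. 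Your one-step lemma is formulated only for $\big(\partial_t+\mathcal{L}_\varep\big)u=F$ with $F$ in $L^p$, and the contradiction/compactness argument you give for it does not cover this additional oscillatory divergence on the right-hand side. You would either need to reformulate the one-step lemma to allow a right-hand side of the form $F+\operatorname{div}\big(A(\cdot/\varep,\cdot)P\big)$ with $|P|$ bounded, taking care in the compactness step that $A(\cdot/\varep_k)P_k\rightharpoonup \bar AP_0$ and $\operatorname{div}(\bar A P_0)=0$, and then verify that the bound on $|P_j|$ closes under the iteration --- or switch to the corrected affine approximants as the paper does, which sidesteps the issue entirely. Your other suggested reduction, subtracting a particular solution of the \emph{constant-coefficient} inhomogeneous equation, also does not work: if $\big(\partial_t+\mathcal{L}_0\big)w_0=F$ then $\big(\partial_t+\mathcal{L}_\varep\big)(u_\varep-w_0)=\big(\mathcal{L}_0-\mathcal{L}_\varep\big)w_0$, which is not zero and is not an $L^p$ function. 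The paper instead uses the fundamental solution of $\partial_t+\mathcal{L}_\varep$ itself, whose uniform estimates already encode the Lipschitz regularity; this is the step you should not try to replace.
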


We refer the reader to Section 2 for the definition of $VMO_x$, and point out here that
if $A=A(x,t)$ is uniformly continuous in $x$ and measurable in $t$, then $A\in VMO_x$.
In Theorems \ref{main-theorem-1} and \ref{main-theorem-2} we also have used the notation:
\begin{equation}\label{1-5}
\aligned
\| u\|_{C^{\alpha, \alpha/2} (E)}
& =\sup_{\substack{ (x,t), (y,s)\in E \\ (x,t)\neq (y,s)}}
 \frac{|u(x,t)-u (y,s)|}{(|x-y| +|t-s|^{1/2})^\alpha }, \quad 0<\alpha\le 1,\\
 \average_E u & =\frac{1}{|E|}\int_E u.
 \endaligned
 \end{equation}
 
 To describe the boundary estimates, we need to introduce more notation.
 Let $\Omega$ be a bounded domain in $\mathbb{R}^d$.
 For $x_0\in \overline{\Omega}$, $t_0\in \mathbb{R}$, and $0<r<r_0=\text{diam}(\Omega)$, we let
 \begin{equation}\label{1-6}
 \aligned
& \Omega_r (x_0,t_0) = \big[ B(x_0, r) \cap \Omega \big] \times (t_0-r^2, t_0),\\
& \Delta_r (x_0, t_0) =\big[  B(x_0, r)\cap \partial\Omega \big] \times (t_0-r^2, t_0).
\endaligned
\end{equation}
The next theorem provides the uniform boundary $W^{1, p}$ and H\"older estimates.

\begin{theorem} \label{main-theorem-3}
Assume that $A$ satisfies the same conditions as in Theorem \ref{main-theorem-1}.
Let $2<p<\infty$ and $\Omega$ be a bounded $C^{1}$ domain in $\mathbb{R}^d$. Suppose that
\begin{equation}\label{1.3-0}
\big(\partial_t +\mathcal{L}_\varep\big) u_\varep =\text{\rm div} (f) \quad \text{ in } \Omega_{2r} (x_0, t_0) \quad \text{ and } \quad
u_\varep =0 \quad \text{ on } \Delta_{2r} (x_0, t_0),
\end{equation}
for some $x_0\in \partial\Omega$ and $t_0\in \mathbb{R}$, where $f=(f_i)\in L^p(\Omega_{2r} (x_0, t_0))$.
 Then
\begin{equation}\label{1.3-1}
\left(\average_{\Omega_r (x_0, t_0)} |\nabla u_\varep|^p\right)^{1/p}
\le C_p \left\{ \frac{1}{r} \left(\average_{\Omega_{2r} (x_0, t_0)} |u_\varep|^2\right)^{1/2}
+\left(\average_{\Omega_{2r} (x_0, t_0)} |f|^p \right)^{1/p} \right\},
\end{equation}
where $C_p$ depends at most on $d$, $m$, $p$, $A$, and $\Omega$.
Moreover, if $p>d+2$ and $\alpha =1-\frac{d+2}{p}$, then
\begin{equation}\label{1.3-2}
\| u_\varep\|_{C^{\alpha, \alpha/2} (\Omega_r (x_0, t_0))}
\le C r^{1-\alpha}
\left\{ \frac{1}{r} \left(\average_{\Omega_{2r} (x_0, t_0)} |u_\varep|^2\right)^{1/2}
+\left(\average_{\Omega_{2r} (x_0, t_0)} |f|^p \right)^{1/p} \right\}.
\end{equation}
\end{theorem}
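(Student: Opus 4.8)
The plan is to follow the compactness scheme that yields the interior estimates of Theorem~\ref{main-theorem-1}, adapted to a boundary point, and then to pass from a ``$C^{1,\alpha}$ estimate for the homogenized problem'' to an ``$L^p$-averaged gradient estimate for $u_\varep$'' by a real-variable (good-$\lambda$ / Calderón–Zygmund) argument. By translation and dilation we may assume $(x_0,t_0)=(0,0)$ and $r=1$, so that we work in $\Omega_2=\Omega_2(0,0)$ with $u_\varep=0$ on $\Delta_2(0,0)$. The first step is a \emph{boundary approximation lemma}: there exist $\theta\in(0,1/4)$ and $\varep_0>0$, depending on $d,m,\mu,p,A,\Omega$, such that if $0<\varep<\varep_0$ and $u_\varep$ solves \eqref{1.3-0} in $\Omega_2$ with $u_\varep=0$ on $\Delta_2$ and with $\big(\average_{\Omega_2}|u_\varep|^2\big)^{1/2}\le 1$, $\big(\average_{\Omega_2}|f|^p\big)^{1/p}\le\eta$ for a small $\eta$, then there is a function $w$ solving the \emph{homogenized} problem $(\partial_t+\mathcal L_0)w=0$ in $\Omega_\theta$ with $w=0$ on $\Delta_\theta$ such that $\average_{\Omega_\theta}|u_\varep-w|^2\le C\theta^{2+2\sigma}$ for some $\sigma>0$. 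This is proved by contradiction and compactness: if it failed we would get a sequence $\varep_k\to 0$ (or bounded away from $0$, in which case classical $C^{1}$ regularity for a fixed operator applies) and solutions $u_k$ converging weakly; using the $H^1$ energy estimate, the interior and boundary $L^2$-compactness of the operators $\partial_t+\mathcal L_{\varep_k}$ (homogenization of parabolic operators — qualitative convergence of solutions together with flux convergence, which is standard and may be quoted), and the fact that $u_k=0$ on $\Delta_2$ flattens to $w=0$ on $\Delta_\theta$, one extracts a limit $w$ solving the constant-coefficient homogenized system with the same boundary data; since $\Omega$ is $C^1$, boundary $C^{\alpha}$ (indeed $C^{1-}$, or $C^{1,\rho}$ after a further reduction in a $C^{1,\rho}$ subdomain, but $C^1$ suffices here because we only need Hölder decay with exponent $<1$) regularity for $w$ gives $\average_{\Omega_\theta}|w-w(0,0)|^2=\average_{\Omega_\theta}|w|^2\le C\theta^{2+2\sigma}$ (using $w(0,0)=0$), contradicting the assumed lower bound for large $k$.

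The second step \emph{iterates} this one-step improvement. Fix $\theta$ from Step~1. One shows, by induction on $k$, that for $0<\varep<\theta^{k-1}\varep_0$ there is a constant-coefficient solution $w_k$ (solving the homogenized initial–Dirichlet problem in $\Omega_{\theta^k}$, vanishing on $\Delta_{\theta^k}$) with
\begin{equation*}
\Big(\average_{\Omega_{\theta^k}}|u_\varep-w_k|^2\Big)^{1/2}\le \theta^{k(1+\sigma)}\Big\{\big(\average_{\Omega_2}|u_\varep|^2\big)^{1/2}+\big(\average_{\Omega_2}|f|^p\big)^{1/p}\Big\},
\end{equation*}
by applying Step~1 to the rescaled function $v(x,t)=\lambda^{-1}\theta^{-k(1+\sigma)}u_\varep(\theta^k x,\theta^{2k}t)$, which solves an equation of the same type with coefficients $A(\cdot/(\varep\theta^{-k}),\cdot/(\varep\theta^{-k})^2)$ — here the $VMO_x$ hypothesis is what makes the smallness of $\varep/\theta^k$ irrelevant (the oscillating coefficients stay in a fixed VMO class under this rescaling, so the fixed-operator Calderón–Zygmund theory is uniform). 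Summing the telescoping differences $w_{k+1}-w_k$, controlled via interior/boundary gradient bounds for the homogenized operator, yields a Campanato-type decay
$\big(\average_{\Omega_\rho}|u_\varep|^2\big)^{1/2}\le C\rho^{\,}\big\{(\average_{\Omega_2}|u_\varep|^2)^{1/2}+(\average_{\Omega_2}|f|^p)^{1/p}\big\}$ uniformly for $\varep<\varep_0$, which by Campanato's characterization is precisely the boundary Hölder estimate \eqref{1.3-2} after undoing the normalization; for $\varep\ge\varep_0$ it follows from classical boundary regularity for the single operator $\partial_t+\mathcal L_\varep$ on the fixed compact range of $\varep$ (with constants uniform there since the coefficients are bounded and lie in a fixed VMO$_x$ class).

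For the $W^{1,p}$ bound \eqref{1.3-1} the plan is a real-variable argument in the spirit of Caffarelli–Peral / Shen. Having the boundary (and, from Theorem~\ref{main-theorem-1}, interior) ``approximation'' of $u_\varep$ by $w$ with $C^{1,\rho}$ interior and Lipschitz-type boundary control of $\nabla w$, one establishes the following: for every parabolic (sub)cylinder $Q$ centered at a boundary point, after subtracting the solution with the ``bad'' part of $f$, $\nabla u_\varep$ is close in $L^2$-average on $Q$ to a function bounded on $2Q$ by $C\big(\average_{4Q}|\nabla u_\varep|^2\big)^{1/2}$ plus $C\big(\average_{4Q}|f|^p\big)^{1/p}$. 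Feeding this into the parabolic version of the abstract real-variable lemma (which upgrades an $L^2$ estimate to an $L^p$ estimate for $p>2$, given that $\nabla u_\varep$ is locally $L^2$ and $f\in L^p$) produces \eqref{1.3-1}. The $C^1$-ness of $\Omega$ enters only through the $L^2$-solvability of the initial–Dirichlet problem in $\Omega_r$ and the flattening of the boundary used in Step~1; no more regularity is needed because we never claim a boundary Lipschitz estimate (that would fail on merely $C^1$ domains).

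I expect the main obstacle to be Step~1 on the boundary: one must simultaneously (i) pass to the homogenization limit in a domain with a moving boundary piece $\Delta_2$ while keeping the zero boundary condition, which requires care with the weak convergence of conormal fluxes near $\partial\Omega$ and with the compactness of traces, and (ii) arrange that the limiting homogenized problem on the (flattened) $C^1$ boundary genuinely enjoys the Hölder decay $\average_{\Omega_\theta}|w|^2\lesssim\theta^{2+2\sigma}$ — this is where $C^1$ regularity of $\Omega$ is used in an essential way, and one must be sure the exponent $\sigma>0$ can be chosen independently of $\varep$ along the contradicting sequence. Once this single compactness step is in place, the iteration and the real-variable passage to $L^p$ are by now standard.
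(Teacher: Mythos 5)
Your overall plan (a boundary compactness/approximation step followed by a real-variable, Calder\'on--Zygmund-type upgrade to $L^p$) matches the paper's strategy in broad strokes, but there is a genuine error in the decay exponent you claim at the heart of Step~1, and it propagates through the rest of the argument.

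You assert that the homogenized solution $w$ on the flattened $C^1$ boundary satisfies
$\average_{\Omega_\theta}|w|^2\le C\theta^{2+2\sigma}$ for some $\sigma>0$, i.e.\ $|w|$ decays like $\theta^{1+\sigma}$ near the boundary point. That is a $C^{1,\sigma}$-type estimate, and it is \emph{false} for constant-coefficient parabolic systems with zero Dirichlet data on a merely $C^1$ domain: one has only $|w|\lesssim \operatorname{dist}(\cdot,\partial\Omega)^\alpha$ for each $\alpha<1$, hence $\average_{\Omega_\theta}|w|^2\lesssim \theta^{2\alpha}$ with $\alpha<1$, not $\theta^{2+2\sigma}$. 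You even note in passing that a boundary Lipschitz estimate would fail on $C^1$ domains, yet your Campanato conclusion $\bigl(\average_{\Omega_\rho}|u_\varep|^2\bigr)^{1/2}\le C\rho\{\cdots\}$ is exactly such a Lipschitz bound. The paper's Lemma~\ref{lemma-7.5} uses the correct, weaker decay: it fixes $\sigma=(1+\alpha)/2\in(\alpha,1)$ and obtains $\average_{D_r}|w|^2\le C_0 r^{2\sigma}$ from boundary $C^\sigma$ regularity, so the contradiction is $\theta^{2\alpha}\le C_0\theta^{2\sigma}$ with $\sigma>\alpha$ (not $\sigma>1$), which is attainable.

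Once the decay is only $O(\rho^{2\alpha})$ with $\alpha<1$, your iteration yields a boundary H\"older estimate for $u_\varep$ but does \emph{not} directly produce $\nabla u_\varep\in L^p$ near $\partial\Omega$: Caccioppoli then gives $\average_{\Omega_\rho}|\nabla u_\varep|^2\lesssim\rho^{2\alpha-2}$, a negative power. The paper closes this gap with the Whitney-cylinder argument in Theorem~\ref{theorem-9.1}: on interior cylinders of size comparable to $\delta(x)=\operatorname{dist}(x,\partial\Omega)$ the interior $W^{1,p}$ estimate gives $\int_{Q_{c\delta}}|\nabla u_\varep|^p\lesssim \int_{Q_{2c\delta}}|u_\varep/\delta|^p$, and summing over Whitney cylinders turns the boundary H\"older bound $|u_\varep|\lesssim\delta^\alpha$ into $\int_{\Omega_1}|\nabla u_\varep|^p\lesssim \int_{\Omega_1}\delta^{(\alpha-1)p}$; since $\alpha$ can be taken arbitrarily close to $1$, one chooses $\alpha$ so that $(\alpha-1)p>-1$ and the integral converges. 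That observation, together with the paper's logical order---first Theorem~\ref{theorem-7.1} (boundary H\"older, compactness, $f=0$), then Theorem~\ref{theorem-9.1} (reverse H\"older for $\nabla u_\varep$, $f=0$, via the Whitney/$\delta^{(\alpha-1)p}$ trick), then the inhomogeneous $W^{1,p}$ bound (\ref{1.3-1}) by the real-variable argument, and finally (\ref{1.3-2}) as a corollary of (\ref{1.3-1}) through Poincar\'e/Campanato---is what your proposal is missing. The rest of your plan (compactness along sequences $A_k$ and $\psi_k$, using $L^2$-compactness and convergence of fluxes to identify the homogenized limit, reduction to a graph via a rational rotation, and the real-variable step for the inhomogeneous right-hand side) is in the right spirit and broadly consistent with the paper.
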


As we indicated earlier, the family of operators $\{ \mathcal{L}_\varep\}$ arises
in the theory of homogenization. Indeed, consider the initial-Dirichlet problem
\begin{equation}\label{Dirichlet-problem}
\left\{
\aligned
\big( \partial_t+\mathcal{L}_\varep \big) u_\varep & =F & \quad & \text{ in } \Omega\times (0,T),\\
 u_\varep & =0 & \quad & \text{ on } \partial\Omega\times (0,T),\\
 u_\varep  & =0 & \quad &\text{ on } \Omega \times \{ 0\}.
\endaligned
\right.
\end{equation}
Under the ellipticity condition (\ref{e1.3}), it is well known that for any  $F
 \in L^2(0,T; W^{-1,2}(\Omega))$,
 (\ref{Dirichlet-problem}) has a unique weak solution 
in $L^2(0, T; W^{1,2}_0(\Omega))$. Moreover,
the solution $u_\varep$ of (\ref{Dirichlet-problem}) satisfies
\begin{equation}\label{e1.5}
\| u_\varep\|_{L^2(0,T; W^{1,2}_0(\Omega))}
\le C \,
\| F\|_{L^2(0,T; W^{-1,2}(\Omega))},
\end{equation}
where $C$ depends only on $d$, $m$, $\mu$, $\Omega$, and $T$.
With the additional periodicity condition (\ref{e1.4}), it follows from the theory of homogenization that
as $\varep\to 0$, 
$u_\varep \to u_0$ weakly in $L^2(0, T; W_0^{1,2}(\Omega))$ and strongly in $L^2(\Omega\times (0,T))$.
Furthermore, the limiting function $u_0$ is a solution of the initial-Dirichlet problem in $\Omega\times (0,T)$
for some parabolic system with
constant coefficients (see e.g. \cite[pp.140-142]{bensoussan-1978}).

Uniform regularity estimates play an important role in the study of convergence problems in homogenization.
In the elliptic case, where $\mathcal{L}_\varep =-\text{div} \big(A(x/\varep)\nabla\big)$,
the interior Lipschitz estimates, as well as the boundary Lipschitz estimates with Dirichlet conditions
in $C^{1,\alpha}$ domains, were established by
M. Avellaneda and F. Lin  in \cite{AL-1987},
under the ellipticity, periodicity, and H\"older continuity conditions on $A=A(y)$.
Also see related work in \cite{AL-1987-ho, AL-1989-ho,
AL-1989-II, AL-1991, Caffarelli-Peral, Shen-2008, Kenig-Shen-1, Kenig-Shen-2, Kenig-Lin-Shen-1,
Geng-Shen-Song, Shen-Song} for various uniform  estimates in elliptic homogenization.
In particular, under the additional symmetry condition $A^*=A$,
 the boundary Lipschitz estimates with Neumann boundary conditions
in $C^{1, \alpha}$ domains were recently obtained 
by C. Kenig, F. Lin, and Z. Shen in \cite{Kenig-Lin-Shen-1}.
We point out that the Lipschitz estimates in \cite{AL-1987, Kenig-Lin-Shen-1} and in our Theorem \ref{main-theorem-2}
 are sharp in the sense that $\nabla u_\varep$ in general are not uniformly H\"older continuous.
We also mention that uniform regularity estimates in \cite{AL-1987, Kenig-Shen-1, Kenig-Shen-2, Kenig-Lin-Shen-1}
have been used to establish sharp rates of convergence of solutions and eigenvalues 
(see e.g.  \cite{AL-1987, Kenig-Lin-Shen-2, Kenig-Lin-Shen-3, Kenig-Lin-Shen-4}). 

Homogenization of parabolic equations and systems has many applications in mechanics and physics.
This paper represents our attempt to extend the results in \cite{AL-1987} to the
parabolic setting, where the elliptic operator $-\text{div} \big(A(x/\varep)\nabla \big)$
is replaced by the parabolic operator $\partial_t -\text{div} \big(A(x/\varep, t/\varep^2)\nabla\big)$.
To this end our first observation is that by a real-variable argument originated in \cite{Caffarelli-Peral} and further developed in \cite{Shen-2005-bounds}, one may reduce the $W^{1,p}$ estimates (\ref{1.1-1}) and
(\ref{1.3-1}) for
$\big(\partial_t +\mathcal{L}_\varep\big) u_\varep =\text{div} (f)$ to a weak reverse
H\"older inequality for local solutions of $\big(\partial_t +\mathcal{L}_\varep \big) u_\varep =0$.
As for the Lipschitz estimate (\ref{1.2-1}), the nonhomogenous case may be reduced to the homogenous case by
using the matrix of fundamental solutions.

 Our main tool for studying local solutions of $\big(\partial_t +\mathcal{L}_\varep\big) u_\varep =0$
 is a three-step compactness argument, similar to that used by Avellandeda and Lin in \cite{AL-1987}.
 The first step uses the fact that if $\varep_k \to 0$ and $\{ A_k\}$ is a sequence of 
 matrices satisfying (\ref{e1.3})-(\ref{e1.4}), then $\partial_t +\mathcal{L}^k_{\varep_k}=
 \partial_t -\text{div} \big( A_k (x/\varep_k, t/\varep_k^2)\nabla\big)$
 G-converges to a second-order parabolic operator with constant coefficients (see 
 Theorem \ref{homo-theorem-2}), whose solutions possess much better regularity properties.
 The second step is an iteration process and relies on the following rescaling property
 of $ \partial_t +\mathcal{L}_\varep$:
 \begin{equation}\label{rescaling}
 \aligned
& \text{ if } \big(\partial_t +\mathcal{L}_\varep\big) u_\varep =F \text{ and } v(x,t)=u_\varep (\delta x, \delta^2 t),\\
& \text{ then } \left(\partial_t +\mathcal{L}_{\varep/\delta} \right) v= G, \text{ where } G(x,t)=\delta^2 F (\delta x, \delta^2 t).
\endaligned
 \end{equation}
 The last step is a blow-up argument and uses the local  regularity theory 
 for the operator $\partial_t +\mathcal{L}_1$.
 We remark that the desired local regularity estimates are classical if $A(x,t)$ is H\"older continuous
 in $(x,t)$.
 There has been a lot of work on $W^{1,p}$ estimates for elliptic and parabolic operators with
 discontinuous coefficients (see e.g.\,\cite{ Auscher, Byun-Wang-2004, Byun-2005,
Shen-2005-bounds, Byun-2007, Krylov-2007, Kim-Krylov-2007, Byun-Wang-2008, Dong-Kim-2010, Dong-2010} and their references). 
In particular, under the assumption $A\in VMO_x$, the interior and boundary
 $W^{1,p}$ estimates for $\partial_t +\mathcal{L}_1$, which are used in the proof of Theorems 
 \ref{main-theorem-1} and \ref{main-theorem-3}, were established by S. Byun \cite{Byun-2007}
 and N.V. Krylov \cite{Krylov-2007}.
 
Finally,  as a corollary of Theorems \ref{main-theorem-1} and \ref{main-theorem-3},
we also obtain the uniform estimates in $W^{1,p}$ of solutions to the initial-Dirichlet problem (\ref{Dirichlet-problem})
for $1<p<\infty$.

\begin{theorem}\label{main-theorem-4}
Assume that $A$ satisfies the same conditions as in Theorem \ref{main-theorem-1}.
Let $1<p<\infty$ and $\Omega$ be a bounded $C^{1}$ domain in $\RD$.
Then for any  $F\in L^p(0,T; W^{-1, p}(\Omega))$, the unique solution to the
 initial-Dirichlet  problem (\ref{Dirichlet-problem}) in $L^p(0,T; W^{1,p}_0(\Omega))$
  satisfies 
  \begin{equation}\label{L-p-estimate}
\| u_\varep\|_{L^p(0,T; W^{1,p}_0 (\Omega))}
\le C_p\,
\| F\|_{L^p (0, T; W^{-1,p}(\Omega))},
\end{equation}
where $C_p$ depends at most on $d$, $m$, $p$, $A$, $T$, and $\Omega$.
\end{theorem}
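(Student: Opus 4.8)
The plan is to prove (\ref{L-p-estimate}) first for the range $p\ge 2$, by patching together the interior and boundary $W^{1,p}$ estimates of Theorems \ref{main-theorem-1} and \ref{main-theorem-3}, and then to deduce the range $1<p<2$ from the case of the exponent $p'=p/(p-1)>2$ by a duality argument; the endpoint $p=2$ is the energy estimate (\ref{e1.5}).

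\emph{The case $p\ge 2$.} Given $F\in L^p(0,T;W^{-1,p}(\Omega))$, I would first write $F=\operatorname{div}(f)$ with $f\in L^p(\Omega\times(0,T))^{dm}$ and $\|f\|_{L^p(\Omega\times(0,T))}\le C\|F\|_{L^p(0,T;W^{-1,p}(\Omega))}$: for a.e.\ $t$ one picks a representation $F(\cdot,t)=\operatorname{div}(f(\cdot,t))$ realizing the $W^{-1,p}(\Omega)$ norm up to a constant, absorbing the zeroth-order part into a divergence by inverting the Laplacian on $\RD$ and invoking the Calder\'on--Zygmund estimates. Since $f\in L^2(\Omega\times(0,T))$ (finite measure, $p\ge2$), problem (\ref{Dirichlet-problem}) has its energy solution $u_\varep\in L^2(0,T;W_0^{1,2}(\Omega))$, and (\ref{e1.5}) together with $\|F\|_{L^2(0,T;W^{-1,2}(\Omega))}\le\|f\|_{L^2(\Omega\times(0,T))}$ gives $\|u_\varep\|_{L^2(\Omega\times(0,T))}\le C\|f\|_{L^p(\Omega\times(0,T))}$. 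Because $u_\varep$ vanishes on $\Omega\times\{0\}$ and $f\equiv0$ for $t<0$, the zero extension $\tilde u_\varep$ of $u_\varep$ to $\Omega\times(-1,T)$ is a weak solution of $(\partial_t+\mathcal{L}_\varep)\tilde u_\varep=\operatorname{div}(\tilde f)$ there with $\tilde u_\varep=0$ on $\partial\Omega\times(-1,T)$. Covering the compact set $\overline\Omega\times[0,T]$ by finitely many interior cylinders $Q=Q_r(x,t)$ with $2Q\subset\Omega\times(-1,T)$ and boundary cylinders $\Omega_r(x,t)$ with $x\in\partial\Omega$ and $\Omega_{2r}(x,t)$ inside the relevant region (possible since $\Omega$ is a bounded $C^1$ domain), applying Theorem \ref{main-theorem-1} on the former and Theorem \ref{main-theorem-3} on the latter, and summing, I would obtain
\[
\|\nabla u_\varep\|_{L^p(\Omega\times(0,T))}\le C\big\{\|u_\varep\|_{L^2(\Omega\times(0,T))}+\|f\|_{L^p(\Omega\times(0,T))}\big\}\le C\|F\|_{L^p(0,T;W^{-1,p}(\Omega))},
\]
with $C$ depending only on $d,m,p,A,T,\Omega$ and, crucially, independent of $\varep$, since the constants in Theorems \ref{main-theorem-1} and \ref{main-theorem-3} are. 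Poincar\'e's inequality in the $x$-variable (valid as $u_\varep(\cdot,t)\in W_0^{1,p}(\Omega)$) then upgrades this to (\ref{L-p-estimate}).

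\emph{The case $1<p<2$.} Here I would argue by duality; it suffices to prove the a priori bound for $F$ in a dense class, existence and uniqueness in $L^p(0,T;W_0^{1,p}(\Omega))$ following by a routine density argument. Write $F=\operatorname{div}(f)$ as above, fix $g\in L^{p'}(\Omega\times(0,T))^{dm}$, and let $v_\varep$ solve the backward problem $-\partial_tv_\varep+\mathcal{L}_\varep^*v_\varep=\operatorname{div}(g)$ in $\Omega\times(0,T)$, $v_\varep=0$ on $\partial\Omega\times(0,T)$, $v_\varep=0$ on $\Omega\times\{T\}$, where $\mathcal{L}_\varep^*=-\operatorname{div}(A^*(x/\varep,t/\varep^2)\nabla)$. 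Under the time reversal $\tau=T-t$ this becomes a forward problem of the form (\ref{Dirichlet-problem}) with coefficient $B_\varep(x/\varep,\tau/\varep^2)$, where $B_\varep(y,s):=A^*(y,T/\varep^2-s)$ again satisfies (\ref{e1.3}) with the same $\mu$, satisfies (\ref{e1.4}), and lies in $VMO_x$ with the same modulus as $A$, hence uniformly in $\varep$. Since $p'>2$, the already-proved case gives $\|\nabla v_\varep\|_{L^{p'}(\Omega\times(0,T))}\le C\|g\|_{L^{p'}(\Omega\times(0,T))}$ uniformly in $\varep$. On the other hand, testing the equation for $u_\varep$ against $v_\varep$ and the equation for $v_\varep$ against $u_\varep$ — the time boundary terms vanishing because $u_\varep(\cdot,0)=0$ and $v_\varep(\cdot,T)=0$ — yields the identity $\int_0^T\!\!\int_\Omega g\cdot\nabla u_\varep=\int_0^T\!\!\int_\Omega f\cdot\nabla v_\varep$, whence
\[
\Big|\int_0^T\!\!\int_\Omega g\cdot\nabla u_\varep\Big|\le\|f\|_{L^p(\Omega\times(0,T))}\|\nabla v_\varep\|_{L^{p'}(\Omega\times(0,T))}\le C\|f\|_{L^p(\Omega\times(0,T))}\|g\|_{L^{p'}(\Omega\times(0,T))}.
\]
Taking the supremum over $\|g\|_{L^{p'}}\le1$ gives $\|\nabla u_\varep\|_{L^p}\le C\|f\|_{L^p}\le C\|F\|_{L^p(W^{-1,p})}$, and Poincar\'e's inequality finishes the proof.

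The analytic content is carried entirely by Theorems \ref{main-theorem-1} and \ref{main-theorem-3} and the energy estimate (\ref{e1.5}); the two points needing care are bookkeeping in the covering argument for $p\ge2$ — arranging the interior/boundary cylinders and converting the local averaged bounds into a global one, so that the resulting constant depends on $\Omega$ only through its $C^1$ character and is independent of $\varep$ — and the verification that the reflected transposed coefficient $B_\varep$ entering the duality step satisfies the hypotheses of the theorem with all constants uniform in $\varep$. I expect the former to be the more delicate of the two.
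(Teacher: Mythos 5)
Your proof is correct and follows essentially the same route as the paper: reduce to $F=\operatorname{div}(f)$, extend by zero in time (which the vanishing initial condition allows), cover $\overline\Omega\times[0,T]$ by interior and boundary cylinders, invoke Theorems \ref{main-theorem-1} and \ref{main-theorem-3} on each piece together with the energy estimate, and treat $1<p<2$ by duality via the backward adjoint problem. The one place you diverge is in handling the zeroth-order part of $F$: the paper writes $F=g+\operatorname{div}(f)$ and then subtracts a solution $w$ of the heat equation $(\partial_t-\Delta)w=g$ with zero parabolic boundary data, so that $u_\varep-w$ solves an equation with a pure divergence right-hand side $\operatorname{div}(f)-\Delta w-\mathcal{L}_\varep(w)$ (and invokes $W^{1,p}$ estimates for the heat equation in $C^1$ cylinders); you instead absorb $g$ into a divergence directly, using surjectivity of $\operatorname{div}\colon L^p(\Omega)^d\to W^{-1,p}(\Omega)$, which collapses that extra reduction step. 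Your version is marginally more economical; the paper's avoids having to discuss the regularity of the divergence--inversion operator and its measurability in $t$. One small imprecision: writing $g=\operatorname{div}(\nabla\phi)$ with $\nabla\phi\in L^p$ does not actually require Calder\'on--Zygmund bounds (those would give $\nabla^2\phi\in L^p$); the first gradient of the Newtonian potential, cut off to a bounded region, is handled by Young's inequality alone, or one can simply invert the Dirichlet Laplacian on the $C^1$ domain $\Omega$. Your explicit treatment of the duality step (time reversal, verification that the transposed reflected coefficient still satisfies the ellipticity, periodicity, and $VMO_x$ hypotheses with $\varep$-uniform constants) is more detailed than the paper's one-line appeal to ``a simple duality argument'' and is correct.
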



\section{Weak solutions of parabolic systems}
\setcounter{equation}{0}

In this section we recall some properties of weak solutions of the parabolic system
\begin{equation}\label{2.1}
\partial_t u -\text{div }\big( A(x,t)\nabla u\big) =F\qquad \text{ in } \Omega \times (T_0, T_1),
\end{equation}
where the matrix $A$ satisfies the ellipticity condition (\ref{e1.3}) and $\Omega$ is a bounded Lipschitz domain in $\mathbb{R}^d$.

Let $F\in L^2 (T_0, T_1; W^{-1,2} (\Omega))$.
We call $u$ a weak solution of (\ref{2.1}) if $u\in L^2(T_0, T_1; W^{1,2}(\Omega))$ and 
\begin{equation}
-\int_{T_0}^{T_1} \int_\Omega u \cdot \frac{\partial\phi}{\partial t}\, dx dt
+\int_{T_0}^{T_1}\int_\Omega A\nabla u \cdot \nabla \phi\, dx dt
=\int_{T_0}^{T_1} <F, \phi>\, dt,
\end{equation}
for any $\mathbb{R}^m$-valued function $\phi$ in $C_0^\infty (\Omega \times (T_0, T_1))$,
where $<\, , \, >$ denotes the pairing between $W^{1,2}_0(\Omega)$ and its dual $W^{-1, 2}(\Omega)$.

The following two lemmas will be useful to us.

 \begin{lemma} \label{Cacci-Lemma}
Let $u$ be a weak solution of $\partial_t  u -\text{\rm div} (A(x,t)\nabla u)=F+\text{\rm div} (f)$ in
$2Q$, where $Q=Q_{r} (x_0, t_0)$ and $f=(f_i), F\in L^2(2Q)$.
Then $u\in C ((t_0-r^2, t_0); L^2(B(x_0, r)))$ and
\begin{equation}\label{Caccio}
\aligned
\sup_{t\in (t_0-r^2, t_0)} \int_{B(x_0, r)} &  | u(x,t)|^2\, dx
  +\int_Q |\nabla u|^2\, dxdt \\
 & \le C \left\{ \frac{1}{r^2}
\int_{2Q} |u|^2 +\int_{2Q} |f|^2  + r^2 \int_{2Q} |F|^2 \right\},
\endaligned
\end{equation}
where $C$ depends only on $d$, $m$, and $\mu$.
\end{lemma}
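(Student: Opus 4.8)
The plan is to derive (\ref{Caccio}) by the classical energy (Caccioppoli) method, testing the equation against $u$ times a space--time cutoff, while handling the limited time regularity of $u$ by a preliminary Steklov regularization in $t$. Choose $\eta=\eta(x)\in C_0^\infty(B(x_0,2r))$ with $\eta\equiv 1$ on $B(x_0,r)$, $0\le\eta\le 1$, and $|\nabla\eta|\le C/r$; choose $\psi=\psi(t)\in C^\infty(\mathbb{R})$ with $\psi\equiv 0$ for $t\le t_0-4r^2$, $\psi\equiv 1$ for $t\ge t_0-r^2$, and $0\le\psi'\le C/r^2$. The natural test function is $\phi=u\,\eta^2\psi$, but since it contains $u$ itself, which is only known to satisfy $\partial_t u\in L^2(t_0-4r^2,t_0;W^{-1,2})$ locally, I would first replace $u$, $f$, $F$ by their Steklov averages $u^h$, $f^h$, $F^h$ in the time variable. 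These satisfy $\partial_t u^h-\text{div}\big((A\nabla u)^h\big)=F^h+\text{div}(f^h)$ with $\partial_t u^h\in L^2_{\text{loc}}$, so one may legitimately multiply by $u^h\eta^2\psi$, integrate over $B(x_0,2r)\times(t_0-4r^2,\tau)$ for arbitrary $\tau\in(t_0-r^2,t_0)$, integrate by parts, and then let $h\to 0$ using the $L^2_{\text{loc}}$ convergence of Steklov averages to obtain the corresponding identity for $u$ itself.

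In this identity the parabolic term contributes $\tfrac12\int_{B(x_0,2r)}|u(x,\tau)|^2\eta^2\,dx-\tfrac12\iint|u|^2\eta^2\psi'$, the boundary contribution at $t=t_0-4r^2$ vanishing because $\psi(t_0-4r^2)=0$; the divergence-form term, written as $\iint A\nabla u\cdot(\nabla u)\,\eta^2\psi+\iint A\nabla u\cdot u\,\nabla(\eta^2)\,\psi$, is bounded below using the ellipticity condition (\ref{e1.3}) by $\mu\iint|\nabla u|^2\eta^2\psi$ minus a cross term of size $C\iint|\nabla u|\,|u|\,\eta|\nabla\eta|\,\psi$; and the right-hand side gives $\iint F\cdot u\,\eta^2\psi-\iint f\cdot\nabla(u\eta^2\psi)$. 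All the cross and forcing terms are then controlled by Young's inequality: the ones involving $\nabla u$ are absorbed into a fraction of $\mu\iint|\nabla u|^2\eta^2\psi$, and what remains is bounded by $C\big(\tfrac1{r^2}\iint_{2Q}|u|^2+\iint_{2Q}|f|^2+r^2\iint_{2Q}|F|^2\big)$, the factor $r^{-2}$ arising from $|\nabla\eta|^2$ and $|\psi'|$, and the factor $r^2$ from balancing the product $|F|\,|u|$. Taking the supremum over $\tau\in(t_0-r^2,t_0)$ and using $\eta\equiv 1$ on $B(x_0,r)$ and $\psi\equiv 1$ on $(t_0-r^2,t_0)$ yields (\ref{Caccio}).

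Finally, the continuity assertion follows from the energy bound just obtained: on any interior subcylinder one has simultaneously $u\in L^2(t;W^{1,2})$ and, reading off the equation, $\partial_t u\in L^2(t;W^{-1,2})$, and the standard lemma for functions enjoying this pair of properties then gives a representative of $u$ in $C\big((t_0-r^2,t_0);L^2(B(x_0,r))\big)$, with $t\mapsto\|u(\cdot,t)\|_{L^2(B(x_0,r))}^2$ absolutely continuous. The only genuine technical point in the whole argument is the Steklov-averaging step that makes the pairing $\langle\partial_t u,\,u\eta^2\psi\rangle$ rigorous; once it is in place the rest is a routine combination of the ellipticity condition and Young's inequality, essentially identical to the scalar parabolic case.
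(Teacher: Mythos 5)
Your argument is correct and is exactly the standard Caccioppoli energy estimate that the paper simply cites from Ladyzhenskaya--Solonnikov--Ural'tseva (Section III.2): test against $u\eta^2\psi$ after Steklov regularization in $t$, use ellipticity and Young's inequality to absorb the cross and forcing terms, and recover continuity into $L^2$ from the resulting $L^2(W^{1,2})$ and $L^2(W^{-1,2})$ bounds. Nothing is missing; the only small remark is that, since $2Q=B(x_0,2r)\times(t_0-4r^2,t_0)$, the temporal cutoff indeed has $3r^2$ of room to climb from $0$ to $1$, so $|\psi'|\lesssim r^{-2}$ is consistent with the $r^{-2}$ and $r^2$ weights in (\ref{Caccio}).
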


\begin{proof}
This is a local energy estimate for the parabolic system with bounded measurable coefficients.
 See \cite[Section III.2]{Lady} for a proof.
\end{proof}

\begin{lemma}\label{Poincare-lemma}
Let $u$ be a weak solution of $\partial_t u -\text{\rm div} \big(A(x,t)\nabla u\big)=\text{\rm div} (f)$ in $2Q$,
where $Q =Q_r (x_0, t_0)$ and  $f=(f_i)\in L^2(2Q)$.
Then
\begin{equation}\label{Poincare}
\int_{Q}
|u- \average_{Q} u |^2
\le C\, r^2\left\{  \int_{2Q} |\nabla u|^2 +\int_{2Q} |f|^2 \right\},
\end{equation}
where $C$ depends only on $d$, $m$, and $\mu$.
\end{lemma}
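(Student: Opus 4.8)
The plan is to reduce this parabolic Poincar\'e inequality to the classical \emph{spatial} Poincar\'e inequality, using the equation only to control how a suitable spatial average of $u$ oscillates in $t$; this detour through the equation is unavoidable, since the right-hand side involves $\nabla u$ and $f$ but not $\partial_t u$. Write $B=B(x_0,r)$ and $I=(t_0-r^2,t_0)$, so that $Q=B\times I$. Fix once and for all a nonnegative $\psi\in C_0^\infty(B)$ with $\int_B\psi=1$; taking $\psi(x)=r^{-d}\theta\big((x-x_0)/r\big)$ for a fixed $\theta\in C_0^\infty(B(0,1))$ one may assume $\|\psi\|_\infty\le Cr^{-d}$ and $\|\nabla\psi\|_\infty\le Cr^{-d-1}$. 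Set
\[
u_\psi(t)=\int_B u(x,t)\,\psi(x)\,dx .
\]

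The first step is to show $u_\psi\in W^{1,2}(I)$ and to estimate its derivative. Testing the weak formulation of $\partial_t u-\mathrm{div}(A\nabla u)=\mathrm{div}(f)$ against test functions of the form $\phi(x,t)=\psi(x)\eta(t)$ with $\eta\in C_0^\infty(I)$ shows that, in the distributional sense on $I$,
\[
u_\psi'(t)=\int_B\big(f(x,t)-A(x,t)\nabla u(x,t)\big)\cdot\nabla\psi(x)\,dx ;
\]
no a priori time regularity of $u$ is needed for this, only that $u$ is a weak solution. By the Cauchy--Schwarz inequality and the ellipticity bound (\ref{e1.3}),
\[
\int_I|u_\psi'(t)|^2\,dt \;\le\; C\,\|\nabla\psi\|_\infty^2\,|B|\int_{B\times I}\big(|\nabla u|^2+|f|^2\big) \;\le\; \frac{C}{r^{\,d+2}}\int_{2Q}\big(|\nabla u|^2+|f|^2\big).
\]

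The second step is a splitting. Since $\average_Q u$ minimizes $c\mapsto\int_Q|u-c|^2$ over constants $c$, one may compare $u$ to the constant $c=\average_I u_\psi$, obtaining
\[
\int_Q |u-\average_Q u|^2 \;\le\; 2\int_Q |u(x,t)-u_\psi(t)|^2\,dx\,dt \;+\; 2|B|\int_I |u_\psi(t)-\average_I u_\psi|^2\,dt .
\]
For the first term, write $u(x,t)-u_\psi(t)=\int_B\big(u(x,t)-u(y,t)\big)\psi(y)\,dy$, apply Jensen's inequality in $y$, use $\|\psi\|_\infty\le Cr^{-d}$ together with the identity $\int_B\int_B|u(x,t)-u(y,t)|^2\,dx\,dy=2|B|\int_B|u(\cdot,t)-\average_B u(\cdot,t)|^2$, and then the classical spatial Poincar\'e inequality on $B$ for a.e.\ $t$; integrating in $t$ over $I$ bounds this term by $Cr^2\int_{2Q}|\nabla u|^2$. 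For the second term, the one-dimensional Poincar\'e inequality on the interval $I$ of length $r^2$ gives $\int_I|u_\psi-\average_I u_\psi|^2\le Cr^4\int_I|u_\psi'|^2$, and combining with the estimate of the first step together with $|B|\le Cr^d$ bounds it by $Cr^2\int_{2Q}\big(|\nabla u|^2+|f|^2\big)$. Adding the two bounds gives (\ref{Poincare}). The one genuinely delicate point is the first step: one must read off the formula for $u_\psi'$ correctly from the definition of weak solution (where no time regularity of $u$ is assumed), and one must keep track of the powers of $r$ so that the gain $|B|\cdot|I|^2\sim r^{d+4}$ produced by the two Poincar\'e inequalities in the second step exactly absorbs the loss $r^{-d-2}$ in the derivative estimate, leaving the stated factor $r^2$; everything else is routine.
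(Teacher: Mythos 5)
Your proof is correct, and it is essentially the argument that the paper invokes by reference to Struwe's Lemma 3: introduce the weighted spatial average $u_\psi(t)=\int_B u\psi$, read off $u_\psi'$ from the weak formulation tested against $\psi(x)\eta(t)$, and combine the spatial Poincar\'e inequality on $B$ with the one-dimensional Poincar\'e inequality on $I$; the power count $|B|\cdot|I|^2\cdot\|\nabla\psi\|_\infty^2\,|B|\sim r^2$ comes out exactly as you say. One cosmetic slip: from the weak form one gets $u_\psi'(t)=-\int_B (A\nabla u+f)\cdot\nabla\psi\,dx$ rather than $\int_B (f-A\nabla u)\cdot\nabla\psi\,dx$, but since only $|u_\psi'|^2$ is used this is immaterial.
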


\begin{proof} The proof is similar to  that of \cite[Lemma 3]{Struwe-1981}.
\end{proof}

The next lemma follows from the Campanato's characterization of (parabolic) H\"older spaces, whose proof
may be found in \cite{Lieberman}.

\begin{lemma}\label{Cam-Lemma}
Let $u\in L^2 (2Q)$ for some $Q=Q_r(x_0,t_0)$.
Suppose that there exist $\alpha\in (0,1]$ and $N>0$ such that
$$
\average_{Q_\rho (x,t)} |u - \average_{Q_\rho (x,t)} u |^2
\le N^2 \rho^{2 \alpha} \qquad 
\text{ for any } (x,t)\in Q \text{ and } 0<\rho<r/2.
$$
Then 
$$
\| u\|_{C^{\alpha, \alpha/2} (Q)}
\le C\, N,
$$
where $C$ depends only on $d$ and $\alpha$.
\end{lemma}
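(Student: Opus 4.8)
\emph{Proof proposal.} I would prove this by the classical Campanato telescoping argument, the only genuine wrinkle being the backward orientation in time of the cylinders $Q_\rho$. Write $Q_\rho(z)=Q_\rho(x,t)$ for $z=(x,t)$, record $|Q_\rho(z)|=c_d\rho^{d+2}$ (so that $|Q_\sigma(z)|/|Q_\rho(z)|=(\sigma/\rho)^{d+2}$ whenever $Q_\rho(z)\subset Q_\sigma(z)$), and note the elementary inclusion $Q_\rho(z)\subset 2Q$ for $z\in Q$ and $0<\rho<r/2$, so that every invocation of the hypothesis below is legitimate. Recall also that, by its definition in (\ref{1-5}), $\|\cdot\|_{C^{\alpha,\alpha/2}(Q)}$ is just the parabolic H\"older \emph{seminorm}, so it suffices to produce a representative of $u$ with the stated modulus of continuity.

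\emph{Step 1: a continuous representative.} For $z\in Q$ and $0<\rho<r/4$, Cauchy--Schwarz together with the hypothesis applied on $Q_{2\rho}(z)$ gives
\[
\big|\average_{Q_\rho(z)}u-\average_{Q_{2\rho}(z)}u\big|
\le\Big(\frac{|Q_{2\rho}(z)|}{|Q_\rho(z)|}\Big)^{1/2}\Big(\average_{Q_{2\rho}(z)}\big|u-\average_{Q_{2\rho}(z)}u\big|^{2}\Big)^{1/2}
\le 2^{(d+2)/2}N(2\rho)^{\alpha}.
\]
Telescoping this along the dyadic scales $2^{-j}\rho$ and summing the geometric series in $j$ shows that the averages $\average_{Q_{2^{-j}\rho}(z)}u$ form a Cauchy sequence; a short interpolation between consecutive dyadic scales then shows that $\widetilde u(z):=\lim_{\rho\to0^{+}}\average_{Q_\rho(z)}u$ exists for every $z\in Q$ and satisfies $|\widetilde u(z)-\average_{Q_\rho(z)}u|\le C(d,\alpha)N\rho^{\alpha}$ for $0<\rho<r/4$. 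Since the parabolic cylinders $Q_\rho(z)$ shrink regularly to $z$, the Lebesgue differentiation theorem gives $\widetilde u=u$ a.e.\ on $Q$, and we work with $\widetilde u$ from now on.

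\emph{Step 2: the seminorm bound.} Fix $z_1=(x_1,t_1)$ and $z_2=(x_2,t_2)$ in $Q$ and put $\delta:=|x_1-x_2|+|t_1-t_2|^{1/2}$. If $\delta<r/4$, the backward geometry makes it impossible to compare $\average_{Q_\delta(z_1)}u$ with $\average_{Q_\delta(z_2)}u$ through a single cylinder centered at $z_1$ or $z_2$; instead I would introduce the shifted point $z^{*}:=(x_1,\max\{t_1,t_2\})\in Q$ and verify the inclusions $Q_\delta(z_1),Q_\delta(z_2)\subset Q^{*}:=Q_{2\delta}(z^{*})$ — for which the bound $|t_1-t_2|\le\delta^{2}$ is exactly what is needed — with $2\delta<r/2$ and $|Q^{*}|/|Q_\delta(z_i)|=2^{d+2}$. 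Running the estimate of Step 1 with $Q^{*}$ in place of the larger cylinder yields $|\average_{Q_\delta(z_i)}u-\average_{Q^{*}}u|\le C_dN\delta^{\alpha}$ for $i=1,2$, hence $|\average_{Q_\delta(z_1)}u-\average_{Q_\delta(z_2)}u|\le C_dN\delta^{\alpha}$; combining with Step 1 at $z_1$ and $z_2$ gives $|\widetilde u(z_1)-\widetilde u(z_2)|\le C(d,\alpha)N\delta^{\alpha}$. If $\delta\ge r/4$, I would instead join $z_1$ to $z_2$ inside $Q$ by a chain of at most an absolute number of points — moving first in $x$ at fixed time, then in $t$ at fixed $x$ — with consecutive parabolic distances $<r/8$, apply the previous case to each link, and add up; since $\delta\ge r/4$ this again yields $|\widetilde u(z_1)-\widetilde u(z_2)|\le C(d,\alpha)N\delta^{\alpha}$. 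Taking the supremum over $z_1\ne z_2$ in $Q$ proves $\|u\|_{C^{\alpha,\alpha/2}(Q)}\le C(d,\alpha)N$.

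\emph{Main obstacle.} There is no analytic difficulty here: the whole content beyond textbook Campanato theory is the geometry of the backward parabolic cylinders, dealt with by replacing the naive center by the shifted point $z^{*}$ and, at the macroscopic scale $r$, by the short chaining step — which is forced only because the hypothesis controls scales $<r/2$ while two points of $Q$ may be at parabolic distance of order $r$. The one thing worth checking carefully at the outset is precisely the pair of elementary cylinder inclusions flagged above.
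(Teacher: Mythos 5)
Your proof is correct and complete. The paper gives no proof of this lemma and simply cites Lieberman; the argument you give -- telescoping the averages down dyadic scales, identifying the pointwise representative via the Lebesgue differentiation theorem, and then oscillation-comparing two nearby points through the single shifted cylinder $Q_{2\delta}(x_1,\max\{t_1,t_2\})$, with a chaining step to cover the macroscopic case -- is precisely the standard parabolic Campanato argument one finds there. The two cylinder inclusions you flag ($Q_\delta(z_1),Q_\delta(z_2)\subset Q_{2\delta}(z^*)$ and $Q_\rho(z)\subset 2Q$) do check out: the temporal inclusion reduces to $3\delta^2\ge|t_1-t_2|$, which holds because $\delta^2\ge|t_1-t_2|$ by the definition of the parabolic distance, and the spatial one to $|x_1-x_2|\le\delta$.
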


In the case of scalar equations ($m=1)$, it follows from the classical De Giorgi-Nash-Moser estimates that weak solutions
 of (\ref{2.1}) are H\"older continuous of order $\alpha$ for some $\alpha>0$. If $m>1$, this is no longer true for all dimensions, and some additional smoothness condition on $A$ is needed in higher dimensions.
 In particular, it is well known that if $A$ is H\"older continuous in $(x,t)$, then
 $\nabla u$ is locally H\"older continuous.
 However, interior $W^{1,p}$ and H\"older estimates for $u$ hold under much weaker conditions on $A$.

Let $A=A(x,t)$ be a locally integrable function in $\mathbb{R}^{d+1}$.
We say $A\in VMO_x$ if
$$
\lim_{r\to 0} A^\# (r) =0,
$$
where
\begin{equation}\label{A-sharp}
A^\# (r)
=\sup_{\substack{ 0<\rho<r\\ (x,t)\in \mathbb{R}^{d+1}}}
\frac{1}{\rho^2 |B(x,\rho)|^2}
\int_{t}^{t+\rho^2}\int_{y\in B(x,\rho)} \int_{z\in B(x,\rho)}
|A(y,s)-A(z,s)|\, dydz ds.
\end{equation}
It follows from \cite{Byun-2007, Krylov-2007} that if $A\in VMO_x$ and $u$ is a weak solution of
$\partial_t u -\text{div} \big( A(x,t)\nabla u\big) =\text{div}(f)$ in $2Q$ for some $Q=Q_r (x_0, t_0)$ and
$0<r<1$, then
\begin{equation}\label{krylov}
\left(\average_{(7/4)Q} |\nabla u|^p \right)^{1/p}
\le {C_p}\left\{ \frac{1}{r}
\left(\average_{2Q} |u|^2\right)^{1/2}
+\left( \average_{2Q} |f|^p\right)^{1/p}
 \right\},
\end{equation}
where $2<p<\infty$ and $C_p$ depends only $d$, $m$, $p$, and $A$.
Note that if $Q_\rho (x,t)\subset (3/2)Q$, then
$$
\left(\average_{Q_\rho (x,t)} |u -\average_{Q_\rho (x,t)} u|^2\right)^{1/2}
\le C r \left( \frac{\rho}{r} \right)^{1-\frac{d+2}{p}}
\left(\average_{(7/4)Q}\big[ |\nabla u|^p +|f|^p \big]\right)^{1/p},
$$
where we have used Lemma \ref{Poincare-lemma} and H\"older's inequality.
This, together with (\ref{krylov}) and Lemma \ref{Cam-Lemma}, shows that if $p>d+2$ and
$\alpha =1-\frac{d+2}{p}$, then
\begin{equation}\label{krylov-1}
\| u\|_{C^{\alpha, \alpha/2} (Q)}
\le C r^{1-\alpha}
\left\{ \frac{1}{r} \left(\average_{2Q} |u|^2\right)^{1/2}
+\left( \average_{2Q} |f|^p \right) \right\}^{1/p}.
\end{equation}

\begin{remark}\label{remark-2.1}
{\rm 
Suppose that
\begin{equation}\label{VMO-x}
A^\# (r)\le \omega (r) \quad \text{ for } 0<r<1,
\end{equation}
where $\omega(r)$ is a bounded increasing function on $(0,1)$ such that $\lim_{r\to 0^+} \omega (r)=0$.
It was in fact proved in \cite{Byun-2007, Krylov-2007} that the constant $C$ in (\ref{krylov}) depends only on 
$d$, $m$, $p$, $\mu$, and the function  $\omega(r)$.
This will be important to us.
Indeed, let $A_\varep (x,t)=A(x/\varep, t/\varep^2)$.
It is easy to see that if $\varep\ge \varep_0>0$,
$$
\big( A_\varep \big)^\# (r) =A^\# (r/\varep)\le A^\# (r/\varep_0).
$$
As a result, if $\varep\ge \varep_0>0$ and
$\big( \partial_t +\mathcal{L}_\varep \big) u_\varep =\text{div} (f)$ in $2Q$ for some $Q=Q_r (x_0,t_0)$
and $0<r<1$, then $u_\varep$ satisfies the estimates (\ref{krylov})-(\ref{krylov-1})
with constant $C$ depending only on $d$, $m$, $p$, $\mu$, $\varep_0$, and $\omega (r)$.
Therefore we will only need to treat the case where $\varep$ is sufficiently small.
}
\end{remark}


\section{Homogenization of parabolic systems}
\setcounter{equation}{0}

Throughout this section we will assume that $\mathcal{L}_\varep =-\text{div} \big( A(x/\varep, t/\varep^2)\nabla\big)$ with 
coefficient matrix $A(y,s)=\big(a_{ij}^{\alpha\beta} (y,s)\big)$ satisfies the ellipticity condition (\ref{e1.3}) and periodicity condition (\ref{e1.4}).
No additional smoothness condition is needed.

Let $Y=[0,1)^{d+1}$. A function $h=h(y,s)$ is said to be $Y$-periodic if $h$ is periodic with respect to $\mathbb{Z}^{d+1}$.
For each $1\le j\le d$ and $1\le \beta \le m$, let 
$\chi_j^\beta =(\chi_j^{1\beta}, \dots, \chi_j^{m\beta})$ be the weak solution of the following cell problem:
\begin{equation}\label{cell-problem}
\left\{
\aligned
& \frac{\partial \chi_j^\beta}{\partial s}
+\mathcal{L}_1 (\chi_j^\beta)  =-\mathcal{L}_1 (P_j^\beta) \quad \text{ in } \mathbb{R}^{d+1},\\
& \chi^\beta_j =\chi_j^\beta (y,s) \text{ is $Y$-periodic},\\
& \int_Y \chi_j^\beta =0,
\endaligned
\right.
\end{equation}
where $P_j^\beta =y_j (0, \dots, 1, \dots, 0)$ with $1$ in the $\beta^{th}$ position.
The matrix $\chi=\chi (y,s) =\big(\chi_j^{\alpha\beta} (y,s)\big)$ is called the matrix of correctors for 
$\partial_t + \mathcal{L}_\varep$ in $\mathbb{R}^{d+1}$.
Note that by (\ref{cell-problem}),
$$
\big(\partial_s+\mathcal{L}_1 \big) \big( \chi_j^\beta + P_j^\beta\big) = 0 \quad \text{ in } \mathbb{R}^{d+1}.
$$
It follows from the rescaling property (\ref{rescaling}) that
\begin{equation}\label{corrector-equation}
\big( \partial_t +\mathcal{L}_\varep \big)
\left\{ \varep \chi_j^\beta (x/\varep, t/\varep^2) +P_j^\beta (x)\right\}=0 \quad \text{ in } \mathbb{R}^{d+1}.
\end{equation}
 
Let $\widehat{A}=(\widehat{a}_{ij}^{\alpha\beta})$, where $1\le i,j\le d$, $1\le \alpha, \beta\le m$, and
\begin{equation}\label{A-hat}
\aligned
\widehat{a}_{ij}^{\alpha\beta}
&=\int_Y
\left[ a_{ij}^{\alpha\beta} +a_{i\ell}^{\alpha\gamma} \frac{\partial}{\partial y_\ell}
\left( \chi_j^{\gamma\beta} \right) \right]\\
&
=\int_Y A\nabla \left\{ P_j^\beta +\chi_j^\beta\right\} \cdot \nabla P_i^\alpha.
\endaligned
\end{equation}
Define $\mathcal{L}_0=-\text{div} (\widehat{A}\nabla )$.
Then $\partial_t +\mathcal{L}_0$ is the homogenized operator associated with
$\partial_t + \mathcal{L}_\varep$.
The proof of the following theorem may be found in \cite[pp.140-142]{bensoussan-1978}.

\begin{theorem}\label{homo-theorem-1}
Let $\Omega$ be a bounded Lipschitz domain in $\mathbb{R}^d$. Let
$u_\varep\in L^2(0,T; W^{1,2}_0(\Omega))\cap L^\infty(0,T; L^2(\Omega))$ be a weak solution of
\begin{equation}\label{3.1}
\big( \partial_t 
+\mathcal{L}_\varep\big)  u_\varep =F \quad \text{ in } \Omega\times (0,T) \quad \text{ and } \quad 
u_\varep =h \quad \text{ on } \Omega \times \{ 0\},
\end{equation}
where $F\in L^2(0, T; W^{-1, 2}(\Omega))$ and $h \in L^2(\Omega)$.
Then, as $\varep\to 0$,
$$
u_\varep \to u_0 \text{ weakly in } L^2(0, T; W^{1,2}_0(\Omega)) \text{ and strongly in } L^2(0,T; L^2(\Omega)),
$$
where $u_0$ is the unique weak solution in $L^2(0,T; W^{1,2}_0(\Omega))\cap L^\infty(0,T; L^2(\Omega))$ of 
\begin{equation}\label{3.2}
\big( \partial_t 
+\mathcal{L}_0\big) u_0 =F \quad \text{ in } \Omega\times (0,T) \quad \text{ and } \quad 
u_0 =h \quad \text{ on } \Omega \times \{ 0\}.
\end{equation}

\end{theorem}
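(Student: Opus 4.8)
The plan is to establish this by the classical oscillating test-function (energy) method of Bensoussan--Lions--Papanicolaou; the only genuinely parabolic feature is that the auxiliary functions must be the \emph{backward} correctors of $-\partial_t+\mathcal{L}_\varep^\ast$, where $\mathcal{L}_\varep^\ast=-\text{div}(A^\ast(x/\varep,t/\varep^2)\nabla)$ and $A^\ast$ is the transpose of $A$. \emph{Step 1 (uniform bounds, compactness, limit equation).} By the energy estimate (\ref{e1.5}) and Lemma \ref{Cacci-Lemma}, $\{u_\varep\}$ is bounded in $L^2(0,T;W^{1,2}_0(\Omega))\cap L^\infty(0,T;L^2(\Omega))$ uniformly in $\varep$, and from (\ref{3.1}) the family $\{\partial_t u_\varep\}$ is bounded in $L^2(0,T;W^{-1,2}(\Omega))$. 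Passing to a subsequence, $u_\varep\rightharpoonup u_0$ weakly in $L^2(0,T;W^{1,2}_0(\Omega))$, $\partial_t u_\varep\rightharpoonup\partial_t u_0$ weakly in $L^2(0,T;W^{-1,2}(\Omega))$, and the flux $\xi_\varep:=A(x/\varep,t/\varep^2)\nabla u_\varep\rightharpoonup\xi_0$ weakly in $L^2(\Omega\times(0,T))$. By the Aubin--Lions compactness lemma, $u_\varep\to u_0$ strongly in $L^2(\Omega\times(0,T))$; since the time-trace $v\mapsto v(\cdot,0)$ is weakly continuous on $\{v\in L^2(0,T;W^{1,2}_0(\Omega)):\partial_t v\in L^2(0,T;W^{-1,2}(\Omega))\}$ and $u_\varep(\cdot,0)=h$ for every $\varep$, also $u_0(\cdot,0)=h$. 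Passing to the limit in the weak form of (\ref{3.1}) gives $\partial_t u_0-\text{div}(\xi_0)=F$ in $\Omega\times(0,T)$, and the whole matter reduces to showing $\xi_0=\widehat{A}\nabla u_0$.

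\emph{Step 2 (backward correctors).} In analogy with (\ref{cell-problem}), for each $1\le i\le d$ and $1\le\alpha\le m$ let $\chi_i^{\ast\alpha}$ be the $Y$-periodic solution with $\int_Y\chi_i^{\ast\alpha}=0$ of $-\partial_s\chi_i^{\ast\alpha}+\mathcal{L}_1^\ast(\chi_i^{\ast\alpha})=-\mathcal{L}_1^\ast(P_i^\alpha)$, and put $w_\varep^{i\alpha}(x,t)=P_i^\alpha(x)+\varep\,\chi_i^{\ast\alpha}(x/\varep,t/\varep^2)$. By the rescaling identity (\ref{rescaling}), $(-\partial_t+\mathcal{L}_\varep^\ast)w_\varep^{i\alpha}=0$. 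Standard periodic averaging gives $w_\varep^{i\alpha}\to P_i^\alpha$ strongly in $L^2_{\mathrm{loc}}$ (because $\varep\,\chi_i^{\ast\alpha}(x/\varep,t/\varep^2)\to0$ in $L^2_{\mathrm{loc}}$), $\{w_\varep^{i\alpha}\}$ bounded in $L^2_{\mathrm{loc}}(0,T;W^{1,2})$, and $A^\ast(x/\varep,t/\varep^2)\nabla w_\varep^{i\alpha}\rightharpoonup(\widehat{A})^\ast\nabla P_i^\alpha$ weakly in $L^2_{\mathrm{loc}}$, where I use the standard fact that the homogenized matrix of $-\partial_t+\mathcal{L}_\varep^\ast$ is $(\widehat{A})^\ast$ (proved by testing the forward and backward cell problems against one another, the $\partial_s$-term integrating to zero by periodicity).

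\emph{Step 3 (identification of the flux; uniqueness).} Fix $\phi\in C_0^\infty(\Omega\times(0,T))$. Test the weak form of (\ref{3.1}) with $\phi\,w_\varep^{i\alpha}$ and the weak form of $(-\partial_t+\mathcal{L}_\varep^\ast)w_\varep^{i\alpha}=0$ with $\phi\,u_\varep$, then subtract. The two ``diagonal'' flux terms cancel because $A_\varep\nabla u_\varep\cdot\nabla w_\varep^{i\alpha}=\nabla u_\varep\cdot A_\varep^\ast\nabla w_\varep^{i\alpha}$, and the time-derivative terms, after one integration by parts in $t$, collapse to the single integral of $-(\partial_t\phi)\,u_\varep\cdot w_\varep^{i\alpha}$ over $\Omega\times(0,T)$; in the identity that remains every integrand is a product of a weakly convergent and a strongly convergent factor, by Steps 1--2. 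Letting $\varep\to0$ and subtracting the weak form of $\partial_t u_0-\text{div}(\xi_0)=F$ tested against $\phi\,P_i^\alpha$ leaves $\int_{\Omega\times(0,T)}\phi\,(\xi_0-\widehat{A}\nabla u_0)\cdot\nabla P_i^\alpha=0$ for all $\phi$ and all $i,\alpha$ (here one uses that $(\widehat{A})^\ast$ is the transpose of $\widehat{A}$ and integrates by parts once in $x$). Hence $\xi_0=\widehat{A}\nabla u_0$, so $u_0$ is a weak solution of (\ref{3.2}). Since $\widehat{A}$ again satisfies an ellipticity condition of the form (\ref{e1.3}), problem (\ref{3.2}) has at most one weak solution in $L^2(0,T;W^{1,2}_0(\Omega))\cap L^\infty(0,T;L^2(\Omega))$ (test the difference of two solutions with itself), so $u_0$ is independent of the subsequence and the convergences of Step 1 hold along the full family $\varep\to0$.

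\emph{Main obstacle.} The delicate point, absent from the elliptic theory, is the time derivative: the Murat--Tartar div--curl lemma cannot be invoked verbatim, and the argument works only because the backward corrector makes the $\partial_t$-terms of the two tested equations telescope after an integration by parts in $t$. Passing the initial datum to the limit is the other place requiring care, handled via the Aubin--Lions compactness and weak continuity of the time-trace; everything else is routine periodic-averaging bookkeeping.
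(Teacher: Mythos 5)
Your proof is correct and takes essentially the same approach the paper relies on: the oscillating-test-function (energy) method with backward correctors for $-\partial_t+\mathcal{L}^*_\varepsilon$, which the paper simply cites from Bensoussan--Lions--Papanicolaou for this theorem and then reproduces in detail, in the slightly more general sequential form, in the proof of Theorem \ref{homo-theorem-2}. Your Steps 2--3 match that proof line by line (the test pair $\phi w_\varepsilon$ / $\phi u_\varepsilon$, cancellation of the bilinear flux terms, collapse of the $\partial_t$-terms into a single $\partial_t\phi$ integral, and identification of the limit flux via the adjoint homogenized matrix identity (\ref{hat-formula-3})), and Step 1 supplies the standard a priori bounds, Aubin--Lions compactness, initial trace, and uniqueness needed to close the argument.
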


\begin{remark}\label{remark-3.1}
{\rm
Using (\ref{cell-problem}), we may  write
\begin{equation}\label{hat-formula-1}
\aligned
\widehat{a}^{\alpha\beta}_{ij}
&= \int_Y 
A\nabla \left\{ P_j^\beta +\chi_j^\beta\right\}\cdot \nabla \big\{ P_i^\alpha +\chi_i^\alpha\big\} \\
& \qquad \qquad \qquad 
+\int_0^1 <\frac{\partial }{\partial s} \left (\chi_j^{\beta}\right), \chi_i^{\alpha} >_{[0,1]^d} \, ds.
\endaligned
\end{equation}
It follows that for any $\xi=(\xi_i^\alpha)\in \mathbb{R}^{dm}$,
\begin{equation}\label{hat-formula-2}
\widehat{a}_{ij}^{\alpha\beta}\xi_i^\alpha\xi_j^\beta
=
\int_Y A\nabla \big\{ \xi_j^\beta P_j^\beta +\xi_j^\beta\chi_j^\beta\big\} \cdot \nabla 
\big\{ \xi_i^\alpha P_i^\alpha +\xi_i^\alpha \chi_i^\alpha \big\},
\end{equation}
where we have used
$$
\int_0^1 <\frac{\partial}{\partial s}
\big( \xi_j^\beta \chi_j^{\beta}\big), \xi_i^\alpha \chi_i^{\alpha} >_{[0,1]^d}\, ds =0.
$$
Using the ellipticity condition (\ref{e1.3}), we may deduce from (\ref{hat-formula-2}) and integration by parts  that
$$
\aligned
\widehat{a}_{ij}^{\alpha\beta} \xi_i^\alpha\xi_j^\beta
& \ge
\mu \int_Y 
\nabla \big\{ \xi_j^\beta P_j^\beta +\xi_j^\beta\chi_j^\beta\big\} \cdot \nabla 
\big\{ \xi_i^\alpha P_i^\alpha +\xi_i^\alpha \chi_i^\alpha \big\}\\
&= \mu|\xi|^2
+\mu
\int_Y 
\nabla \big\{ \xi_j^\beta\chi_j^\beta\big\} \cdot \nabla \big\{ \xi_i^\alpha \chi_i^\alpha\big\} \\
&\ge \mu |\xi|^2.
\endaligned
$$
Hence, for any $\xi=(\xi_i^\alpha)\in \mathbb{R}^{dm}$,
\begin{equation}\label{ellipticity-1}
\mu |\xi|^2 \le \widehat{a}_{ij}^{\alpha\beta} \xi_i^\alpha \xi_j^\beta \le \mu_1 |\xi|^2,
\end{equation}
where $\mu_1$ depends only on $d$, $m$, and $\mu$.
This gives the ellipticity of the homogenized matrix $\widehat{A}=\big( \widehat{a}_{ij}^{\alpha\beta}\big)$.
}
\end{remark}

\begin{remark}\label{remark-3.2}
{\rm
Let $\chi^* (y,s)=(\chi_i^{*\alpha} (y,s))$ denote the matrix of correctors for 
$$
-\partial_t -\text{div}
\big(A^* (x/\varep, t/\varep^2)\nabla\big),
$$
 where $A^* (y,s)=(b_{ij}^{\alpha\beta} (y,s))$ with $b_{ij}^{\alpha\beta}
=a_{ji}^{\beta\alpha}$ is the adjoint of $A=\big(a_{ij}^{\alpha\beta}\big)$. That is,
\begin{equation}\label{adjoint-corrector}
\left\{
\aligned
& 
-\frac{\partial}{\partial s} \big( \chi_i^{*\alpha}\big)
-\text{\rm div} \left(A^* \nabla \chi_i^{*\alpha}\right)
=\text{\rm div} \left( A^* \nabla P_i^\alpha\right) \quad \text{ in } \mathbb{R}^{d+1},\\
& \chi_i^{*\alpha} \text{ is $Y$-periodic},\\
&\int_Y \chi_i^{*\alpha} =0.
\endaligned
\right.
\end{equation}
It follows from (\ref{adjoint-corrector}) that
\begin{equation}\label{adjoint-3.1}
-\int_0^1 <\frac{\partial}{\partial s} \big(\chi_i^{*\alpha}\big), \chi_j^\beta>_{[0,1]^d}\, ds+\int_Y A^*\nabla \chi_i^{*\alpha} \cdot \nabla \chi_j^\beta
=\int_Y A^*\nabla P_i^\alpha \cdot \nabla \chi_j^\beta.
\end{equation}
Similarly, by (\ref{cell-problem}),
\begin{equation}\label{adjoint-3.2}
\int_0^1 <\frac{\partial }{\partial s}\big(\chi_j^\beta\big), \chi_i^{*\alpha}>_{[0,1]^d}\, ds
+\int_Y A\nabla \chi_j^\beta \cdot \nabla \chi_i^{*\alpha}
=\int_Y A\nabla P_j^\beta \cdot \nabla \chi_i^{*\alpha}.
\end{equation}
In view of (\ref{adjoint-3.1})-(\ref{adjoint-3.2}) we obtain
\begin{equation}\label{adjoint-3.3}
\aligned
\int_Y A\nabla \chi_j^\beta\cdot \nabla P_i^\alpha &=
\int_Y A^* \nabla P_i^\alpha \cdot \nabla \chi_j^\beta\\
& =\int_Y A\nabla P_j^\beta \cdot \nabla \chi_i^{*\alpha}
=\int_Y A^*\nabla\chi_i^{*\alpha} \cdot \nabla P_j^\beta.
\endaligned
\end{equation}
This, together with (\ref{hat-formula-1}), gives another formula for $\widehat{a}_{ij}^{\alpha\beta}$:
\begin{equation}\label{hat-formula-3}
\widehat{a}_{ij}^{\alpha\beta}
=\int_Y
A^* \nabla \big( \chi_i^{*\alpha}+P_i^\alpha\big) \cdot \nabla P_j^\beta.
\end{equation}
}
\end{remark}

A compactness argument will be used in following sections to establish uniform interior and boundary estimates.
This would require us to consider a sequence of matrices $\{ A_k(y,s)\} $ satisfying conditions (\ref{e1.3})-(\ref{e1.4}).
An extension of Theorem \ref{homo-theorem-1} is thus needed,

\begin{lemma}\label{lemma-3.1}
Let $h_k=h_k(y,s)$ be a sequence of locally square integrable and $Y$-periodic functions in $\mathbb{R}^{d+1}$.
Suppose that  $\| h_k\|_{L^2(Y)} \le C$ and as $k\to \infty$,
$
\int_Y h_k \to M.
$
Then, for any $\varep_k \to 0$,
$$
h_k (x/\varep_k, t/\varep_k^2) \to M \quad \text{ weakly in } L^2 (Q)
$$
for any bounded domain $Q$ in $\mathbb{R}^{d+1}$.
\end{lemma}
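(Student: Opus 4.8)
The plan is to reduce the statement to the classical fact that a periodic oscillation converges weakly to its average, the only new feature being that the functions $h_k$ vary with $k$. Set $m_k=\int_Y h_k$ and $g_k=h_k-m_k$, so that $g_k$ is $Y$-periodic, $\int_Y g_k=0$, and $\|g_k\|_{L^2(Y)}\le\|h_k\|_{L^2(Y)}+|m_k|\le C$. Since $m_k\to M$, the constant functions $m_k$ converge to $M$ strongly, hence weakly, in $L^2(Q)$; so it suffices to prove that $G_k(x,t):=g_k(x/\varep_k,t/\varep_k^2)$ tends to $0$ weakly in $L^2(Q)$. First I would establish a uniform bound. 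Decompose $\mathbb{R}^{d+1}$ into the parabolic cells $C_{a,b}=\big(\varep_k a+[0,\varep_k)^d\big)\times\big(\varep_k^2 b+[0,\varep_k^2)\big)$, $a\in\mathbb{Z}^d$, $b\in\mathbb{Z}$, each of measure $\varep_k^{\,d+2}$; the map $(x,t)\mapsto(x/\varep_k,t/\varep_k^2)$ carries $C_{a,b}$ bijectively onto the unit cell $(a+[0,1)^d)\times(b+[0,1))$, so by $\mathbb{Z}^{d+1}$-periodicity
\[
\int_{C_{a,b}}|G_k|^2\,dxdt=\varep_k^{\,d+2}\int_Y|g_k|^2,\qquad \int_{C_{a,b}}G_k\,dxdt=\varep_k^{\,d+2}\int_Y g_k=0 .
\]
Summing the first identity over the $O\big(\varep_k^{-(d+2)}\big)$ cells that meet the bounded set $Q$ gives $\|G_k\|_{L^2(Q)}\le C$, hence also $\|G_k\|_{L^1(Q)}\le C$, uniformly in $k$.

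Since $C_0^\infty(Q)$ is dense in $L^2(Q)$ and $\{G_k\}$ is bounded there, the weak convergence $G_k\rightharpoonup 0$ will follow once I show $\int_Q G_k\,\phi\,dxdt\to 0$ for every fixed $\phi\in C_0^\infty(Q)$. Fix such a $\phi$. For $\varep_k$ small, every cell $C_{a,b}$ meeting $\operatorname{supp}\phi$ lies in $Q$, and there are only finitely many of them, with union contained in a fixed bounded set. On each such cell, with center $z$, the vanishing of $\int_{C_{a,b}}G_k$ gives
\[
\int_{C_{a,b}}G_k\,\phi\,dxdt=\int_{C_{a,b}}G_k\,\big(\phi-\phi(z)\big)\,dxdt ,
\]
whence $\big|\int_{C_{a,b}}G_k\,\phi\big|\le\omega_\phi(c\varep_k)\int_{C_{a,b}}|G_k|$, where $\omega_\phi$ is the modulus of continuity of $\phi$ and $c=c(d)$ bounds the parabolic diameter of a cell by $c\varep_k$ (for $\varep_k\le1$). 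Summing over the relevant cells and using the uniform $L^1$ bound,
\[
\Big|\int_Q G_k\,\phi\,dxdt\Big|\le\omega_\phi(c\varep_k)\,\|G_k\|_{L^1(Q)}\le C\,\omega_\phi(c\varep_k)\longrightarrow 0 ,
\]
which completes the proof.

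I expect no genuine obstacle here: the content is the standard averaging computation, and the dependence of $h_k$ on $k$ is absorbed entirely by the \emph{uniform} $L^2(Y)$ bound, which transfers through periodicity and the parabolic scaling to a uniform $L^2(Q)$ bound on $G_k$. The only thing requiring a little care is setting up the cell decomposition consistently with the anisotropic scaling $(x,t)\mapsto(x/\varep,t/\varep^2)$, i.e.\ using cells of $x$-sidelength $\varep_k$ and $t$-length $\varep_k^2$; note also that no separate treatment of ``boundary'' cells is needed, since $\phi\in C_0^\infty(Q)$ is globally uniformly continuous and $\int_{C_{a,b}}G_k=0$ on \emph{every} full cell.
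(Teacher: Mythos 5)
Your proof is correct, but it follows a genuinely different route from the paper's. The paper also normalizes to $\int_Y h_k=0$ and also first establishes the uniform $L^2(Q)$ bound, but for the oscillation part it introduces an auxiliary $Y$-periodic function $u_k$ solving $\Delta_{(y,s)}u_k=h_k$ in $\mathbb{R}^{d+1}$, writes $h_k(x/\varepsilon_k,t/\varepsilon_k^2)$ as a divergence via the chain rule, and integrates by parts against $\varphi\in C_0^1(Q)$: this produces explicit factors of $\varepsilon_k$ and $\varepsilon_k^2$ in front of $\nabla_y u_k$ and $\partial_s u_k$, which are uniformly bounded in $L^2(Q)$ by elliptic regularity, so the integral vanishes by Cauchy--Schwarz. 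You instead work directly with the anisotropic cell decomposition (cells of size $\varepsilon_k^d\times\varepsilon_k^2$), use the vanishing of $\int_{C_{a,b}}G_k$ on each full cell to subtract the value of $\phi$ at the cell center, and then estimate by the modulus of continuity of $\phi$ times the uniform $L^1$ bound. Both arguments are sound; yours is more elementary in that it avoids the periodic Poisson problem and the attendant $W^{2,2}$ regularity, at the cost of being a bit more hands-on with the geometry. The paper's version is perhaps easier to adapt when the oscillating factor multiplies something less smooth than $\varphi$, since it puts the gain of $\varepsilon_k$ entirely on the test function side through a single integration by parts; conversely, your approach makes the role of the uniform bound $\|h_k\|_{L^2(Y)}\le C$ transparent without invoking any PDE machinery.
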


\begin{proof}
The proof is standard.
By considering $h_k -\int_Y h_k$ we may assume that $\int_Y h_k =0$ and hence, $M=0$.
Using $\| h_k\|_{L^2(Y)} \le C$, one may show that the sequence
$\{ h_k (x/\varep_k, t/\varep_k^2 )\}$ is bounded in $L^2(Q)$.
Thus, it suffices to show that
$$
\int_Q h_k (x/\varep_k, t/\varep_k^2)\, \varphi (x,t)\, dxdt \to 0, \text{ as } k\to \infty,
$$
for any $\varphi \in C_0^1 (Q)$.

To this end let $u_k\in W^{2,2}_{loc} (\mathbb{R}^{d+1}) $ be an $Y$-periodic function in $\mathbb{R}^{d+1}$ such that
$\Delta_{(y,s)} u_k =h_k$ in $\mathbb{R}^{d+1}$, where $\Delta_{(y,s)}$ denotes the Laplacian in 
$\mathbb{R}^{d+1}$.
It follows from integration by parts that
\begin{equation}\label{3.0}
\aligned
&\int_Q h_k (x/\varep_k, t/\varep_k^2) \varphi (x,t)\, dxdt\\
&=-\varep_k \int_Q \nabla u_k (x/\varep_k, t/\varep_k^2)\cdot \nabla \varphi (x,t)\, dxdt
-\varep_k^2 \int_Q \frac{\partial u_k}{\partial t} (x/\varep_k, t/\varep_k^2) \frac{\partial\varphi}{\partial t}\, dxdt.
\endaligned
\end{equation}
Since $\|\nabla u_k\|_{L^2(Y)} +\| \partial_t  u_k \|_{L^2(Y)} \le C \| h_k\|_{L^2(Y)}\le C$,
the sequences $\{ \nabla u_k (x/\varep_k, t/\varep_k^2)\}$ and $\{ \partial_t  u_k (x/\varep_k, t/\varep_k^2)\}$
are bounded in $L^2(Q)$.
The desired result follows easily from (\ref{3.0}) by the Cauchy inequality.
\end{proof}

The next lemma is due to  J.P. Aubin and J. L. Lions.

\begin{lemma}\label{embedding-lemma}
Let $X_0\subset X\subset X_1$ be three Banach spaces.
Suppose that $X_0, X_1$ are reflexive and that the injection $X_0 \subset X$ is compact.
Let $1<\alpha_0,\alpha_1<\infty$. Define
$$
Y=\left\{ u: \ u \in L^{\alpha_0} (T_0, T_1; X_0) \text{ and } \partial_t  u \in L^{\alpha_1} (T_0, T_1; X_1) \right\}
$$
with norm 
$$
\| u\|_Y =\| u\|_{L^{\alpha_0} (T_0, T_1; X_0)} +\| \partial_t u\|_{L^{\alpha_1} (T_0, T_1; X_1)}.
$$
Then $Y$ is a Banach space, and the injection 
$Y\subset L^{\alpha_0} (T_0, T_1; X)$ is compact.
\end{lemma}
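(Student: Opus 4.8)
The completeness of $Y$ is the easy half and I would dispatch it first: the map $u\mapsto(u,\partial_t u)$ is a linear isometry of $Y$ onto a subspace of the Banach space $L^{\alpha_0}(T_0,T_1;X_0)\times L^{\alpha_1}(T_0,T_1;X_1)$, and this subspace is closed because the distributional time derivative is a closed operator (if $u_n\to u$ in $L^{\alpha_0}(T_0,T_1;X_0)$ and $\partial_t u_n\to g$ in $L^{\alpha_1}(T_0,T_1;X_1)$, then $g=\partial_t u$). So the substance of the lemma is that the inclusion $Y\hookrightarrow L^{\alpha_0}(T_0,T_1;X)$ sends bounded sets to relatively compact sets, and I would prove this along the classical Aubin--Lions--Simon lines. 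The argument rests on two ingredients: an Ehrling-type interpolation inequality exploiting the compact embedding $X_0\subset\subset X$, and a compactness statement for a bounded family in $Y$ regarded as taking values in the weaker space $X_1$.

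For the first ingredient I would show that for every $\eta>0$ there is a constant $C_\eta$ with
\[
\|v\|_X\le\eta\,\|v\|_{X_0}+C_\eta\,\|v\|_{X_1}\qquad\text{for all }v\in X_0 .
\]
This is the standard compactness-by-contradiction argument: a normalized sequence $\|v_n\|_X=1$ violating the inequality would be bounded in $X_0$ with $\|v_n\|_{X_1}\to0$, so by compactness of $X_0\subset X$ a subsequence converges in $X$ to some $v$ with $\|v\|_X=1$, while convergence to $0$ in $X_1$ forces $v=0$ --- a contradiction. Applying this pointwise in $t$ and integrating gives, for any $u,v\in Y$,
\[
\|u-v\|_{L^{\alpha_0}(T_0,T_1;X)}\le C\eta\big(\|u\|_Y+\|v\|_Y\big)+C\,C_\eta\,\|u-v\|_{L^{\alpha_0}(T_0,T_1;X_1)} .
\]
Consequently, once I know that a set $\mathcal B$ bounded in $Y$ is relatively compact in $L^{\alpha_0}(T_0,T_1;X_1)$, a routine $\delta/2$ argument together with completeness of $L^{\alpha_0}(T_0,T_1;X)$ yields relative compactness in $L^{\alpha_0}(T_0,T_1;X)$.

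The second ingredient is the heart of the matter. Let $\mathcal B$ be bounded in $Y$. Since $\alpha_1>1$, every $u\in\mathcal B$ has $\partial_t u\in L^1(T_0,T_1;X_1)$, so $u$ coincides a.e.\ with a function in $C([T_0,T_1];X_1)$ satisfying $\|u(t)-u(s)\|_{X_1}\le|t-s|^{1-1/\alpha_1}\|\partial_t u\|_{L^{\alpha_1}(X_1)}$; choosing a good base point via $u\in L^1(T_0,T_1;X_1)$ also gives $\sup_t\|u(t)\|_{X_1}\le C\|u\|_Y$. Thus $\mathcal B$ is equibounded and equi-H\"older in $C([T_0,T_1];X_1)$. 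To apply Arzel\`a--Ascoli I must also see that $\{u(t):u\in\mathcal B\}$ is relatively compact in $X_1$ for each fixed $t$, and this is where the key trick enters: for small $h>0$ write
\[
u(t)=\frac1h\int_t^{t+h}u(s)\,ds-\frac1h\int_t^{t+h}\!\!\int_t^s\partial_t u(\tau)\,d\tau\,ds .
\]
The second term has $X_1$-norm $\le C\,h^{1-1/\alpha_1}\|u\|_Y$ uniformly over $\mathcal B$, while by H\"older's inequality the averages $h^{-1}\int_t^{t+h}u$ form, for each fixed $h$, a bounded subset of $X_0$, hence --- by the compact embedding $X_0\subset\subset X$ and the continuous embedding $X\hookrightarrow X_1$ --- a relatively compact subset of $X_1$. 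So $\{u(t):u\in\mathcal B\}$ lies within $C\,h^{1-1/\alpha_1}$ of a totally bounded set for every $h$, hence is totally bounded in $X_1$. Arzel\`a--Ascoli then gives that $\mathcal B$ is relatively compact in $C([T_0,T_1];X_1)$, and therefore in $L^{\alpha_0}(T_0,T_1;X_1)$ as well, which is exactly what the first ingredient requires. (Near the right endpoint one averages over $[t-h,t]$ instead.)

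The only genuinely delicate point is the pointwise-in-time relative compactness just discussed: the hypotheses give no a priori control of $u(t)$ in $X_0$ at a fixed time, so one is forced to trade a short time-average --- which \emph{is} controlled in $X_0$, hence compact in $X$ --- against the $X_1$-oscillation of $u$, and then shrink the averaging window. The reflexivity of $X_0$ and $X_1$ is not essential to this route, the compact embedding $X_0\subset\subset X$ doing the real work; it can, however, be used to streamline the extraction of weak limits (e.g.\ in the Ehrling argument and when identifying limits of subsequences), which is presumably why it appears among the hypotheses. Since the lemma is attributed to Aubin and Lions, an acceptable alternative is simply to cite the original sources or Simon's refinement; the sketch above merely records the argument for completeness.
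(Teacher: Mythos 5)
The paper does not actually prove this lemma; it simply records the statement and attributes it to Aubin and Lions, so there is no argument in the paper to compare against. Your sketch is a correct proof of the Aubin--Lions compactness theorem, and it follows the route made standard by Simon rather than the original Aubin or Lions arguments: the decisive steps are the Ehrling interpolation $\|v\|_X\le\eta\|v\|_{X_0}+C_\eta\|v\|_{X_1}$, the identity
$$
u(t)=\frac1h\int_t^{t+h}u(s)\,ds-\frac1h\int_t^{t+h}\!\!\int_t^s\partial_t u(\tau)\,d\tau\,ds,
$$
which trades pointwise control in $X_0$ (unavailable) for control of a short time average (available by H\"older, with cost $h^{-1/\alpha_0}$) plus an $X_1$-error of size $O(h^{1-1/\alpha_1})$, and the vector-valued Arzel\`a--Ascoli theorem in $C([T_0,T_1];X_1)$. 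All the estimates you write are correct: the equi-H\"older bound uses $\alpha_1>1$, the equiboundedness follows from averaging the base point, and the pointwise total boundedness of $\{u(t):u\in\mathcal B\}$ in $X_1$ follows from the $h$-trick together with the compactness of $X_0\hookrightarrow X\hookrightarrow X_1$. Your closing remark is also on target: in this route the reflexivity of $X_0$ and $X_1$ is not used in an essential way (Simon's version dispenses with it), whereas the original Aubin--Lions proofs lean on reflexivity to extract weak limits; one plausible reason the hypothesis is retained in the paper's statement is simply fidelity to the classical formulation. The only point worth spelling out a bit more if you were to write this in full is the vector-valued Arzel\`a--Ascoli step itself (equicontinuity plus pointwise relative compactness implies relative compactness in the sup norm), but you have supplied exactly the two hypotheses it needs, so there is no gap.
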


\begin{theorem}\label{homo-theorem-2}
Let $\Omega$ be a bounded Lipschitz domain in $\mathbb{R}^d$ and $F \in L^2 (T_0, T_1; W^{-1,2} (\Omega))$.
Let $u_k \in L^2(T_0,T_1; W^{1,2}(\Omega))$ be a weak solution of
\begin{equation}\label{equation-k}
\partial_t u_k -\text{\rm div} \big[ A_k (x/\varep_k, t/\varep_k^2) \nabla u_k\big]
= F \quad \text{ in } \Omega \times (T_0, T_1),
\end{equation}
where $\varep_k \to 0$ and the matrix $A_k(y,s)$ satisfies (\ref{e1.3})-(\ref{e1.4}).
Suppose that $\widehat{A_k} \to A^0$, and 
$$
\left\{
\aligned
u_k  & \to u &\quad& \text{ weakly in } L^2(T_0, T_1; L^2(\Omega)),\\
\nabla u_k  & \to \nabla u& \quad & \text{ weakly in } L^2(T_0, T_1; L^2 (\Omega)).\\
\endaligned
\right.
$$
Then $u\in L^2(0, T; W^{1,2}(\Omega))$ is a  solution of
$$
\partial_t u -\text{\rm div} \big( A^0 \nabla u\big)=F \quad \text{ in } \Omega \times (T_0, T_1),
$$
and the constant  matrix $A^0$ satisfies the ellipticity condition (\ref{ellipticity-1}).
\end{theorem}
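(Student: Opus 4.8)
The plan is to prove this as a standard $G$-convergence / div-curl lemma argument, which is the natural parabolic analogue of the Murat--Tartar method. First I would introduce the flux $\xi_k = A_k(x/\varep_k, t/\varep_k^2)\nabla u_k$. By the ellipticity bound \eqref{e1.3}, $\{\xi_k\}$ is bounded in $L^2(T_0,T_1;L^2(\Omega))$, so after passing to a subsequence $\xi_k \rightharpoonup \xi^0$ weakly in $L^2$. Passing to the weak limit in the equation \eqref{equation-k} immediately gives $\partial_t u - \text{div}(\xi^0) = F$ in the distributional sense; moreover this shows $\partial_t u$ lies in $L^2(T_0,T_1; W^{-1,2}(\Omega))$ since $\xi^0 \in L^2$ and $F \in L^2(T_0,T_1;W^{-1,2})$. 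The whole content of the theorem is therefore the identification $\xi^0 = A^0 \nabla u$.

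To make that identification I would test against the adjoint correctors. Fix $i,\alpha$ and set $w_k^{*} (x,t) = P_i^\alpha(x) + \varep_k \chi_i^{*\alpha}(x/\varep_k, t/\varep_k^2)$, which by \eqref{adjoint-corrector} and the rescaling property satisfies $-\partial_t w_k^* - \text{div}(A_k^*(x/\varep_k,t/\varep_k^2)\nabla w_k^*) = 0$ locally in $\mathbb{R}^{d+1}$. Using periodicity and Lemma \ref{lemma-3.1}, one has $w_k^* \to P_i^\alpha$ strongly in $L^2_{loc}$ (since $\varep_k \chi_i^{*\alpha}(\cdot/\varep_k,\cdot/\varep_k^2) \to 0$), $\nabla w_k^* \rightharpoonup \nabla P_i^\alpha = e_i^\alpha$ weakly in $L^2_{loc}$, and $A_k^*(x/\varep_k,t/\varep_k^2)\nabla w_k^* \rightharpoonup \eta^0$ weakly in $L^2_{loc}$ where, by Lemma \ref{lemma-3.1} applied to the $Y$-periodic function $A^*\nabla(\chi_i^{*\alpha}+P_i^\alpha)$ and formula \eqref{hat-formula-3}, the constant vector $\eta^0$ has components $\eta^0_{j\beta} = \widehat{a_k}^{\ast}{}^{\dots}$ — more precisely $\int_Y A_k^*\nabla(\chi_i^{*\alpha}+P_i^\alpha)\cdot\nabla P_j^\beta = \widehat{a_k}{}_{ij}^{\alpha\beta} \to A^0{}_{ij}^{\alpha\beta}$. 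Then for any $\varphi \in C_0^\infty(\Omega\times(T_0,T_1))$ I would compute $\int A_k(x/\varep_k,t/\varep_k^2)\nabla u_k \cdot \nabla(\varphi\, \cdot\, \text{component of } w_k^*)$ in two ways, integrating by parts so the time derivatives land where the equations control them, and pass to the limit using the div-curl structure: the parabolic compactness (Lemma \ref{embedding-lemma}, giving $u_k \to u$ and $w_k^* \to P_i^\alpha$ strongly in $L^2$ of the relevant spaces) plus the weak convergences above let one pass to the limit in the product terms. The result is the pair of identities $\int \xi^0 \cdot e_i^\alpha\, \varphi = \int (A^0\nabla u)\cdot e_i^\alpha\, \varphi$ for each $i,\alpha$, i.e. $\xi^0 = A^0\nabla u$.

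The main obstacle is the rigorous handling of the time derivatives in the div-curl step: neither $\partial_t u_k$ nor $\partial_t w_k^*$ is bounded in $L^2(L^2)$, only in $L^2(W^{-1,2})$ (for $u_k$) and locally after rescaling (for $w_k^*$), so one cannot naively integrate by parts in $t$. The standard fix is to use the test function $\varphi\, v_k^{*\beta j}$ where $v_k^{*\beta j}$ is the relevant scalar component and to exploit the two PDEs: in $\int \partial_t u_k \cdot (\varphi w_k^*) - \int \partial_t w_k^* \cdot (\varphi u_k)$ the ``bad'' terms combine, via the two equations, into spatial-gradient terms that are controlled in $L^2$, leaving a manageable $\int u_k \cdot w_k^* \,\partial_t\varphi$ term in which one uses the strong $L^2$ convergence of one factor against the weak $L^2$ convergence of the other (justified by Lemma \ref{embedding-lemma}). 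Once this cancellation is organized, the remaining passages to the limit are routine applications of weak-times-strong convergence and Lemma \ref{lemma-3.1}.

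Finally, the ellipticity of $A^0$ follows at once: $\widehat{A_k}$ satisfies \eqref{ellipticity-1} with constants depending only on $d,m,\mu$ by Remark \ref{remark-3.1}, and \eqref{ellipticity-1} is a closed condition under the convergence $\widehat{A_k}\to A^0$, so $A^0$ satisfies it too. This also shows $A^0$ is a legitimate (constant) elliptic coefficient matrix, so the limit equation $\partial_t u - \text{div}(A^0\nabla u) = F$ is a well-posed constant-coefficient parabolic system, completing the proof.
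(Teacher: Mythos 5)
Your proposal follows essentially the same route as the paper: flux $\xi_k = A_k(x/\varep_k,t/\varep_k^2)\nabla u_k \rightharpoonup \xi^0$, identification of $\xi^0 = A^0\nabla u$ via the oscillating test function $\theta_k = P + \varep_k\,\omega_k(\cdot/\varep_k,\cdot/\varep_k^2)$ built from the \emph{adjoint} correctors, the two-equation cancellation of the $\partial_t$ terms (the paper's (\ref{3.5})--(\ref{3.6})), strong $L^2$ compactness via Lemma \ref{embedding-lemma}, the weak limit of $A_k^*\nabla\theta_k$ via Lemma \ref{lemma-3.1} together with formula (\ref{hat-formula-3}) to recognize the constant as $\widehat{A_k}\to A^0$, and the observation that the uniform ellipticity (\ref{ellipticity-1}) of $\widehat{A_k}$ passes to the limit. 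The key difficulty you flag and the fix you describe are precisely what the paper does, so this is correct and not a genuinely different argument.
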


\begin{proof}
The proof is similar to that of Theorem 2.1 in \cite[p.140]{bensoussan-1978} for $k=2$.
We provide a proof here for the sake of completeness.

We first note that since $\{ u_k\}$ is bounded in $L^2(T_0, T_1; W^{1,2}(\Omega))$, by (\ref{equation-k}),
 $\{ \partial_t u_k\}$ is bounded in $L^2(T_0, T_1; W^{-1,2}(\Omega))$. In view of Lemma \ref{embedding-lemma},
by passing to a subsequence, we may assume that $u_k\to u$
strongly in $L^2(T_0, T_1; L^2(\Omega))$. Since $\{ A_k (x/\varep_k, t/\varep_k)\nabla u_k\}$ is bounded in 
$L^2 (T_0, T_1; L^2(\Omega))$,
by passing to another subsequence, we may further assume that 
$
\{ A_k (x/\varep_k, t/\varep_k^2)\nabla u_k \}
 \text{ converges weakly in } L^2(T_0, T_1; L^2(\Omega)).
$

Let $P$ be an $\mathbb{R}^m$-valued affine function in $y$ and $\omega_{k}$
be the (weak) solution of the following cell problem:
\begin{equation}\label{cell-problem-1}
\left\{
\aligned
& -\frac{\partial \omega_k}{\partial s}
-\text{div} \big( A_k^* \nabla \omega_k\big)  =\text{div} \big( A_k^* \nabla P\big) \quad \text{ in } \mathbb{R}^{d+1},\\
& \omega_k =\omega_k (y,s) \text{ is $Y$-periodic},\\
& \int_Y \omega_k  =0.
\endaligned
\right.
\end{equation}
Let
$$
\theta_k=\theta_k (x,t)
=\varep_k \omega_k (x/\varep_k, t/\varep^2_k) +P (x).
$$
Note that
$$
\left(-\partial_t +\mathcal{L}_{\varep_k}^{k*} \right) \theta_k =0 \quad \text{ in } \mathbb{R}^{d+1},
$$
where $\mathcal{L}_\varep^{k*} =-\text{div} (A^*_k (x/\varep, t/\varep^2)\nabla)$,
and for any scalar function $\phi$ in $C_0^1 (\Omega\times (T_0, T_1))$,
\begin{equation}\label{3.5}
\left\{
\aligned
\int_{T_0}^{T_1} <\frac{\partial u_k}{\partial t}, \phi \theta_k> dt
+\int_{T_0}^{T_1} \int_\Omega A_k (x/\varep_k, t/\varep_k^2)\nabla u_k \cdot \nabla (\phi\theta_k )\, dx dt
&=\int_{T_0}^{T_1} < F, \phi\theta_k > dt,\\
-\int_{T_0}^{T_1}<\frac{\partial\theta_k}{\partial t}, \phi u_k > dt
+\int_{T_0}^{T_1}\int_\Omega A_k^* (x/\varep_k, t/\varep_k^2)\nabla \theta_k \cdot
\nabla (\phi u_k)\, dxdt & =0.
\endaligned
\right.
\end{equation}
It follows from (\ref{3.5}) that
\begin{equation}\label{3.6}
\aligned
& \int_{T_0}^{T_1}\int_\Omega
\left\{ A_k (x/\varep_k, t/\varep_k^2) \nabla u_k \cdot \theta_k
-A_k^* (x/\varep_k, t/\varep_k^2) \nabla \theta_k \cdot u_k \right\} \nabla \phi\, dxdt\\
& =
\int_{T_0}^{T_1} <u_k, \theta_k \frac{\partial\phi}{\partial t} > dt + \int_{T_0}^{T_1} <F, \phi \theta_k> dt.
\endaligned
\end{equation}
It is easy to see that $\theta_k \to P$ strongly in $L^2 (T_0,T_1; L^2(\Omega))$. 
Since $u_k\to u$ weakly in $L^2(T_0,T_1;L^2(\Omega))$, this implies that the right hand side of (\ref{3.6}) converges to
$$
\int_{T_0}^{T_1} <u, P\frac{\partial \phi}{\partial t}>  dt+\int_{T_0}^{T_1} <F, \phi  P> dt.
$$

Let $\eta=\eta (x,t)$ denote the weak limit of
$
A_k(x/\varep_k, t/\varep_k^2) \nabla u_k  \text{ in }L^2(T_0, T_1; L^2(\Omega))
$. Then
\begin{equation}\label{3.0-0-0}
-\int_{T_0}^{T_1}<u, \frac{\partial \psi}{\partial t}> dt
+\int_{T_0}^{T_1} <\eta, \nabla \psi> dt =\int_{T_0}^{T_1} <F, \psi> dt,
\end{equation}
for any $\mathbb{R}^m$-valued function $\psi$ in $C_0^1(\Omega\times (T_0, T_1))$. In particular,
taking $\psi =\phi P$, we obtain 
$$
\int_{T_0}^{T_1} <\eta, \nabla (\phi P)> dt =
\int_{T_0}^{T_1} < F, \phi P> dt +\int_{T_0}^{T_1} < u, P\frac{\partial \phi}{\partial t}> dt.
$$
As a result, the right hand side of (\ref{3.6}) converges to
\begin{equation}\label{3.0-0}
\int_{T_0}^{T_1} <\eta, \nabla (\phi P)>  dt= \int_{T_0}^{T_1}<\eta, P \nabla \phi > +
\int_{T_0}^{T_1} <\eta, \phi\nabla P> dt.
\end{equation}

Next, using the fact that $\theta_k \to P$ strongly in $L^2(T_0, T_1; L^2(\Omega))$, we see that
$$
\int_{T_0}^{T_1} \int_\Omega
A_k (x/\varep_k, t/\varep_k^2)\nabla u_k \cdot \theta_k \nabla \phi\, dxdt
\to \int_{T_0}^{T_1} <\eta, P\nabla \phi> dt.
$$
In view of (\ref{3.6}) and (\ref{3.0-0}) we have proved that
\begin{equation}\label{3.0-1}
-\int_{T_0}^{T_1} \int_\Omega A_k^* (x/\varep_k, t/\varep_k^2) \nabla \theta_k \cdot u_k \nabla \phi\, dxdt
\to \int_{T_0}^{T_1} <\eta, \phi \nabla P> dt.
\end{equation}
On the other hand, by Lemma \ref{lemma-3.1}, 
\begin{equation}\label{3.0-2}
A_k^* (x/\varep_k, t/\varep_k^2)\nabla \theta_k
\to M=\lim_{k\to \infty}
\int_Y A_k^* (y,s) \left\{ \nabla w_k (y,s) +\nabla P \right\}\, dyds
\end{equation}
 weakly in $ L^2(\Omega \times (T_0, T_1))$, provided the limit in the right hand side of (\ref{3.0-2}) exists.
 It follows that the left hand side of (\ref{3.0-1}) also converges to
 $$
 -M\int_{T_0}^{T_1} \int_\Omega u \nabla \phi\, dxdt
 =M \int_{T_0}^{T_1} \int_\Omega (\nabla u) \phi\, dxdt.
 $$
 Consequently, we obtain
 $$
 M \int_{T_0}^{T_1} \int_\Omega (\nabla u) \phi\, dxdt = \int_{T_0}^{T_1} <\eta, \phi \nabla P> dt.
 $$
 Since $\phi\in C_0^1(\Omega\times (T_0, T_1))$ is arbitrary, this gives $\eta \nabla P =M \nabla u$.
 
 Finally, note that if we take $P=P_i^\alpha =y_i (0, \dots,1, \dots, 0)$
 with $1$ in the $\alpha^{th}$ position, then
 $\omega_k=(\omega_k^\gamma) =(\chi_{i,k}^{*\gamma\alpha})$ are the correctors for the
 operator $-\partial_t  -\text{div} \big(A^*_k (x/\varep, t/\varep^2)\nabla \big)$.
 Let
 $$
 C_{ij,k}^{\alpha\beta}
  =\iint_Y
  a_{j\ell,k}^{* \beta\gamma}
\frac{\partial}{\partial y_\ell} \big\{ \omega_k^\gamma  +P\big\}
=\iint_Y A^*_k \nabla \big( \chi_{i,k}^{*\alpha}+P_j^\alpha\big)\cdot \nabla P_j^\beta.
 $$
 It follows from (\ref{hat-formula-3}) that 
 $(C_{ij,k}^{\alpha\beta})=\widehat{A_k}$ is the homogenized matrix for $A_k$ and thus satisfies the ellipticity condition (\ref{ellipticity-1}).
Since $\widehat{A_k}\to A^0$,  we see that
$\eta =A^0 \nabla u$ and $A^0$ satisfies (\ref{ellipticity-1}). This, together with (\ref{3.0-0-0}), gives
$$
\partial_t u -\text{\rm div} \big( A^0 \nabla u\big)=F \quad \text{ in } \Omega \times (T_0, T_1),
$$
and completes the proof.
\end{proof}


\section{Interior H\"older estimates}
\setcounter{equation}{0}

In this section, as an intermediate step,
 we establish the interior H\"older estimates for local solutions of
\begin{equation}\label{4.1}
\partial_t u_\varep-\text{div} \big(A(x/\varep,t/\varep^2)\nabla u_\varep \big)=0,
\end{equation}
under the conditions (\ref{e1.3})-(\ref{e1.4}) and $A\in VMO_x$.

\begin{theorem}\label{theorem-4.1}
Suppose that $A=A(y,s)$ satisfies (\ref{e1.3})-(\ref{e1.4}).
Also assume that $A\in VMO_x$.
Let $u_\varep$ be a weak solution of  (\ref{4.1}) in $2Q$ for
some $Q=Q_R (x_0, t_0)$.
Then, for any $0<\alpha<1$,
\begin{equation}\label{Holder-4.1}
\| u_\varep\|_{C^{\alpha, \alpha/2} (Q)}
\le 
\frac{C}{R^\alpha} \left\{ \average_{2Q} |u_\varep|^2\right\}^{1/2},
\end{equation}
where $C$ depends at most on $d$, $m$, $\alpha$, $\mu$, and $\omega(r)$ in (\ref{VMO-x}).
In particular,
\begin{equation}\label{L-infty}
\|u_\varep\|_{L^\infty (Q)}
\le C
\left\{ \average_{2Q} |u_\varep|^2 \right\}^{1/2}.
\end{equation}
\end{theorem}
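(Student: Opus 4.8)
The idea is to run the standard three-step compactness scheme of Avellaneda–Lin, here adapted to the parabolic operator $\partial_t+\mathcal{L}_\varep$. By Remark \ref{remark-2.1}, it suffices to treat the case where $\varep$ is small; by rescaling (cf. (\ref{rescaling})) and translation we may take $Q=Q_1(0,0)$, so we must prove $\|u_\varep\|_{C^{\alpha,\alpha/2}(Q_1)}\le C(\average_{Q_2}|u_\varep|^2)^{1/2}$. By Lemma \ref{Cam-Lemma}, it is enough to control the growth of the Campanato-type quantity $\average_{Q_\rho(x,t)}|u_\varep-\average_{Q_\rho(x,t)}u_\varep|^2$ at all interior points and all small scales $\rho$, and by the rescaling property (\ref{rescaling}) it suffices to prove a one-step improvement at a fixed pair of scales.

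\begin{lemma}\label{one-step-holder}
Fix $0<\alpha<1$. There exist $\varep_0>0$ and $0<\theta<1/4$, depending only on $d$, $m$, $\mu$, $\alpha$, and $\omega(r)$, such that if $0<\varep<\varep_0$ and $u_\varep$ is a weak solution of (\ref{4.1}) in $Q_1=Q_1(0,0)$ with $\average_{Q_1}|u_\varep|^2\le 1$, then
$$
\left(\average_{Q_\theta} \big| u_\varep -\average_{Q_\theta} u_\varep\big|^2\right)^{1/2}
\le \theta^\alpha.
$$
\end{lemma}

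The proof of Lemma \ref{one-step-holder} is by contradiction and compactness. If it fails, there are sequences $\varep_k\to 0$, matrices $A_k$ satisfying (\ref{e1.3})--(\ref{e1.4}), and solutions $u_k$ of $\partial_t u_k-\mathrm{div}(A_k(x/\varep_k,t/\varep_k^2)\nabla u_k)=0$ in $Q_1$ with $\average_{Q_1}|u_k|^2\le 1$ but $(\average_{Q_\theta}|u_k-\average_{Q_\theta}u_k|^2)^{1/2}>\theta^\alpha$ for every $k$. By the Caccioppoli inequality (Lemma \ref{Cacci-Lemma}), $\{u_k\}$ is bounded in $L^2((-3/4)^2,0;W^{1,2}(B_{3/4}))\cap L^\infty$, and hence by the Aubin–Lions lemma (Lemma \ref{embedding-lemma}) a subsequence converges strongly in $L^2(Q_{3/4})$ and weakly in $L^2(W^{1,2})$ to some $u$; passing to a further subsequence we may assume $\widehat{A_k}\to A^0$, a constant matrix satisfying (\ref{ellipticity-1}). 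By Theorem \ref{homo-theorem-2}, $u$ solves $\partial_t u-\mathrm{div}(A^0\nabla u)=0$ in $Q_{3/4}$ with $\average_{Q_{3/4}}|u|^2\le C$. Since $A^0$ is constant, the classical interior $C^\infty$ regularity for constant-coefficient parabolic systems gives $u\in C^{1,1/2}$ near $(0,0)$, so there is a constant $C_0=C_0(d,m,\mu)$ with $(\average_{Q_\rho}|u-\average_{Q_\rho}u|^2)^{1/2}\le C_0\rho$ for all $\rho\le 1/2$; choosing $\theta$ so small that $C_0\theta\le\frac12\theta^\alpha$ forces $(\average_{Q_\theta}|u-\average_{Q_\theta}u|^2)^{1/2}\le\frac12\theta^\alpha$, which contradicts the strong $L^2$ convergence $u_k\to u$ on $Q_\theta$ (note $\theta<1/4$). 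This proves Lemma \ref{one-step-holder}.

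Granting Lemma \ref{one-step-holder}, one iterates using (\ref{rescaling}): applying the lemma to the rescaled functions $u_\varep(\theta^j x,\theta^{2j}t)/\|\cdot\|$ (which solve an equation of the same type with parameter $\varep/\theta^j$, still $<\varep_0$ as long as $\theta^j>\varep/\varep_0$), one obtains by induction the geometric decay $(\average_{Q_{\theta^j}}|u_\varep-\average_{Q_{\theta^j}}u_\varep|^2)^{1/2}\le\theta^{j\alpha}(\average_{Q_1}|u_\varep|^2)^{1/2}$ for all $j$ with $\theta^j>\varep/\varep_0$; for the remaining small scales $\rho<\varep/\varep_0$ one uses the $VMO_x$ estimate (\ref{krylov-1}) from Remark \ref{remark-2.1} at scale $\sim\varep$, which holds with constant depending only on $d,m,\mu,\omega$. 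Interpolating over dyadic-in-$\theta$ scales converts this into the bound $\average_{Q_\rho(x,t)}|u_\varep-\average_{Q_\rho(x,t)}u_\varep|^2\le C\rho^{2\alpha}(\average_{Q_2}|u_\varep|^2)$ for every $(x,t)\in Q_1$ and $0<\rho<1/2$, after a covering argument to pass from the center $(0,0)$ to arbitrary interior points (shrinking the outer cube from $2Q$ to $Q$ absorbs the loss). Lemma \ref{Cam-Lemma} then yields (\ref{Holder-4.1}), and (\ref{L-infty}) follows since $C^{\alpha,\alpha/2}(Q)\hookrightarrow L^\infty(Q)$ together with the triangle inequality $\|u_\varep\|_{L^\infty}\le\|u_\varep\|_{C^{\alpha,\alpha/2}}\,(\mathrm{diam}\,Q)^\alpha+(\average_Q|u_\varep|^2)^{1/2}$, both terms controlled by the right side of (\ref{Holder-4.1}). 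The main obstacle is the compactness step: one must verify that the hypotheses of Theorem \ref{homo-theorem-2} are genuinely available along the subsequence — in particular the uniform bounds from Lemma \ref{Cacci-Lemma} and the strong $L^2$ compactness — and that the limit equation has constant coefficients so that classical regularity applies; the bookkeeping of which scales are handled by compactness ($\theta^j\gtrsim\varep$) versus by the frozen-coefficient $W^{1,p}$ estimate ($\rho\lesssim\varep$) is the other point requiring care.
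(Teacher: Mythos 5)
Your proposal is correct and follows essentially the same three-step compactness scheme as the paper: a one-step decay lemma proved by contradiction via Theorem \ref{homo-theorem-2} and the $C^{1,1/2}$ regularity of constant-coefficient systems, iteration by rescaling down to scale $\sim\varep/\varep_0$, and a blow-up to the operator $\partial_t+\mathcal{L}_1$ with the $VMO_x$ estimate (\ref{krylov-1}) for the smaller scales. The only cosmetic differences are that you state the one-step lemma with the $L^2$ average un-squared and fold the iteration into the final argument, whereas the paper isolates it as a separate induction lemma (Lemma \ref{lemma-4.2}).
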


By translation and rescaling property (\ref{rescaling}), to prove (\ref{Holder-4.1}),  we may assume that
$Q=Q_1 (0, 0)$.
In view of Lemma \ref{Cam-Lemma}, it suffices to prove that
\begin{equation}\label{4.2}
\average_{Q_r (x,t)}
|u_\varep - \average_{Q_r(x,t)} u_\varep |^2
\le C r^{2\alpha}
\average_{2Q} |u_\varep|^2,
\end{equation}
for any $(x,t)\in Q$ and $0<r<1/2$.
Furthermore, by translation and a simple covering argument, we only need to show that
if $u_\varep$ is a weak solution of (\ref{4.1}) in $Q_1$,
\begin{equation}\label{4.3}
\average_{Q_r} |u_\varep -\average_{Q_r} u_\varep |^2
\le C r^{2\alpha} \average_{Q_1} |u_\varep|^2,
\end{equation}
for any $0<r<1/2$, where $Q_r =Q_r (0,0)$.
This will be done by a three-step compactness argument,
similar to that used in \cite{AL-1987}.

\begin{lemma}\label{lemma-4.1}
Let $0<\alpha<1$.
Then there exist constants $\varep_0>0$ and $\theta\in (0, 1/4)$, depending only on $d$, $m$, $\alpha$ and $\mu$, such that
\begin{equation}\label{4.1-1}
\average_{Q_\theta} |u_\varep
-\average_{Q_\theta} u_\varep  |^2 \le \theta^{2\alpha},
\end{equation}
whenever $0<\varep<\varep_0$, $u_\varep$ is a weak solution of (\ref{4.1}) in $Q_1$, and
$$
\average_{Q_1} |u_\varep|^2 \le 1.
$$
\end{lemma}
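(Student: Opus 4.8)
The plan is to argue by contradiction using the compactness (G-convergence) theorem established in the previous section. Suppose the conclusion fails. Then for every $\varepsilon_0>0$ and every $\theta\in(0,1/4)$ the inequality \eqref{4.1-1} is violated by some admissible solution. In particular, fixing a sequence $\varepsilon_0=\varepsilon_k\to0$ (and leaving $\theta$ to be chosen), we obtain matrices $A_k$ satisfying \eqref{e1.3}--\eqref{e1.4}, parameters $\varepsilon_k\to0$, and weak solutions $u_k$ of $\partial_t u_k-\operatorname{div}(A_k(x/\varepsilon_k,t/\varepsilon_k^2)\nabla u_k)=0$ in $Q_1$ with $\average_{Q_1}|u_k|^2\le1$ but
\begin{equation*}
\average_{Q_\theta}\bigl|u_k-\average_{Q_\theta}u_k\bigr|^2>\theta^{2\alpha}.
\end{equation*}
First I would extract a convergent subsequence. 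By the Caccioppoli inequality (Lemma \ref{Cacci-Lemma}), $\{u_k\}$ is bounded in $L^2(-1/4,0;W^{1,2}(B(0,1/2)))\cap L^\infty(-1/4,0;L^2(B(0,1/2)))$, hence $\{\partial_t u_k\}$ is bounded in $L^2(-1/4,0;W^{-1,2}(B(0,1/2)))$; by Lemma \ref{embedding-lemma} (Aubin--Lions) a subsequence converges strongly in $L^2$ and weakly in $L^2(W^{1,2})$ to some $u$. Since the $\widehat{A_k}$ lie in a fixed compact set (they satisfy the uniform ellipticity bound \eqref{ellipticity-1} and are bounded by $\mu_1$), we may also assume $\widehat{A_k}\to A^0$.

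Next I would apply Theorem \ref{homo-theorem-2} on a slightly smaller cylinder, say $Q_{1/2}$, to conclude that the limit $u$ solves $\partial_t u-\operatorname{div}(A^0\nabla u)=0$ there, where $A^0$ is a constant matrix satisfying the ellipticity condition \eqref{ellipticity-1}. For such a constant-coefficient parabolic system the interior regularity theory is classical: $u$ is smooth in the interior, and there is a Campanato-type estimate
\begin{equation*}
\average_{Q_\theta}\bigl|u-\average_{Q_\theta}u\bigr|^2\le C_0\,\theta^2\average_{Q_{1/2}}|u|^2\le C_0\,\theta^2,
\end{equation*}
with $C_0$ depending only on $d$, $m$, and $\mu$ (through the ellipticity constants of $A^0$), using $\average_{Q_1}|u_k|^2\le1$ and lower semicontinuity to get $\average_{Q_{1/2}}|u|^2\le1$. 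Since $\alpha<1$, I choose $\theta\in(0,1/4)$ small enough that $C_0\theta^2\le\tfrac12\theta^{2\alpha}$; this fixes $\theta$ depending only on $d,m,\alpha,\mu$. Finally, using the strong $L^2$ convergence $u_k\to u$ on $Q_\theta$ (note $\average_{Q_\theta}|u_k-\average_{Q_\theta}u_k|^2$ depends continuously on $u_k$ in $L^2(Q_\theta)$), I pass to the limit in the displayed strict inequality to get $\average_{Q_\theta}|u-\average_{Q_\theta}u|^2\ge\theta^{2\alpha}>C_0\theta^2\ge\average_{Q_\theta}|u-\average_{Q_\theta}u|^2$, a contradiction. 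Hence a suitable $\varepsilon_0$ exists.

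The main obstacle is making the two uses of compactness fit together cleanly: the G-convergence result Theorem \ref{homo-theorem-2} requires the hypotheses that $u_k\to u$ and $\nabla u_k\to\nabla u$ weakly in $L^2$, so I must first secure $L^2(W^{1,2})$ bounds via Caccioppoli on an interior subdomain before invoking Aubin--Lions, and I must choose the cylinders so that $2Q$ in Theorem \ref{theorem-4.1} (here $Q_1$) comfortably contains the enlargements needed for the energy estimate. A secondary point requiring care is that the constant $C_0$ in the constant-coefficient estimate must be uniform over all $A^0$ arising as limits of homogenized matrices; this is guaranteed precisely because \eqref{ellipticity-1} gives ellipticity and an upper bound $\mu_1$ depending only on $d,m,\mu$, so the classical Schauder/energy estimates for $\partial_t-\operatorname{div}(A^0\nabla)$ are uniform. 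Once $\theta$ and $\varepsilon_0$ are pinned down in this order, the contradiction argument closes.
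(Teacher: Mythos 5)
Your argument follows the paper's proof essentially line by line: the same contradiction setup with $\varepsilon_k\to0$, the same use of Caccioppoli plus Aubin--Lions to get strong $L^2$ convergence, Theorem \ref{homo-theorem-2} for the limit equation, the constant-coefficient Campanato estimate, and the same choice of $\theta$ before $\varepsilon_0$. One small bookkeeping slip: weak lower semicontinuity from $\average_{Q_1}|u_k|^2\le1$ gives $\int_{Q_1}|u|^2\le|Q_1|$, hence $\average_{Q_{1/2}}|u|^2\le 2^{d+2}$ rather than $\le1$; this is harmless since you can simply require $2^{d+2}C_0\theta^2<\theta^{2\alpha}$ (exactly as the paper does) when fixing $\theta$.
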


\begin{proof}
The lemma is proved by contradiction, using Theorem \ref{homo-theorem-2} and the
fact that for any $\theta\in (0,1/4)$,
\begin{equation}\label{4.1-2}
\average_{Q_\theta} 
|u - \average_{Q_\theta} u |^2
\le {C_0 \theta^2}
\average_{Q_{1/2}}
|u|^2,
\end{equation}
where $\partial_t u-\text{div} \big( A^0\nabla u\big) =0$ in $Q_{1/2}$
and $A^0$ is a constant matrix satisfying the ellipticity condition (\ref{ellipticity-1}).
The constant $C_0$ in (\ref{4.1-2}) depends only on $d$, $m$, and $\mu$.

Fix $0<\alpha<1$. Choose $\theta\in (0,1/4)$ so small that $2^{d+2} C_0\theta^2< \theta^{2\alpha}$.
We claim that the estimate (\ref{4.1-1}) holds for this $\theta$ and some $\varep_0>0$,
which depends only on $d$, $m$, and $\mu$.

Suppose this is not the case. Then there exist sequences $\{ \varep_k\}$, $\{ A_k\}$
satisfying (\ref{e1.3})-(\ref{e1.4}), and $\{ u_k\}\subset L^2 (-1,0; W^{1,2} (B(0,1)))$ such that $\varep_k\to 0$,
\begin{equation}
 \partial_t u_k -\text{div} \big( A_k (x/\varep_k, t/\varep_k^2)\nabla u_k \big) =0 \qquad \text{ in } Q_1,
 \end{equation}
and
\begin{equation}\label{4.1-3}
 \average_{Q_1} |u_k|^2\le 1
 \quad \text{ and } 
 \quad
\average_{Q_\theta}
|u_k - \average_{Q_\theta} u_k |^2 > \theta^{2\alpha}.
\end{equation}
Since $\{ u_k\}$ is bounded in $L^2(Q_1)$, it follows from the energy estimate (\ref{Caccio}) that
$\{ \nabla u_k\}$ is bounded in $L^2(Q_{1/2})$. 
Hence, by passing to subsequences, we may assume that
$$
\left\{
\aligned
u_k  & \to u &\quad &\text{ weakly in } L^2(Q_1),\\
\nabla u_k & \to \nabla u &\quad& \text{ weakly in } L^2(Q_{1/2}).
\endaligned
\right.
$$
Also, note that $\{ \partial_t u_k\}$ is bounded in $L^2(-1/4, 0; W^{-1,2}(B(0,1/2))$.
In view of Lemma \ref{embedding-lemma} we may assume that
$u_k \to u$ strongly in $L^2(Q_{1/2})$.
It then follows from (\ref{4.1-3}) that
\begin{equation}\label{4.1-4}
\average_{Q_1} |u|^2\le 1
\text{ and }
\average_{Q_\theta}
|u - \average_{Q_\theta} u |^2  \ge \theta^{2\alpha}.
\end{equation}

Finally, by Theorem \ref{homo-theorem-2}, the function $u$ is a solution of $\partial_t u -\text{div} (A^0 \nabla u)=0$
in $Q_{1/2}$ for some constant matrix $A^0$ satisfying (\ref{ellipticity-1});
as a result, the estimate (\ref{4.1-2}) holds.
This, together with (\ref{4.1-4}), gives
$$
\theta^{2\alpha}
\le {C_0 \theta^2}
\average_{Q_{1/2}} |u|^2
\le 
 {2^{d+2} C_0 \theta^2}
\average_{Q_{1}} |u|^2
\le 2^{d+2} C_0 \theta^2,
$$
which is in contradiction with the choice of $\theta$.
\end{proof}

\begin{lemma}\label{lemma-4.2}
Fix $0<\alpha<1$.
Let $\varep_0$ and $\theta$ be given by Lemma \ref{lemma-4.1}.
Suppose that $u_\varep$ is a weak solution of
$\big (\partial_t +\mathcal{L}_\varep\big) u_\varep=0$
in $Q_1$.
Then, if $0<\varep< \varep_0 \theta^{k-1}$ for some $k\ge 1$,
\begin{equation}\label{4.2-1}
\average_{Q_{\theta^k}}
|u_\varep - \average_{Q_{\theta^k}} u_\varep |^2
\le 
\theta^{2k \alpha}
\average_{Q_1} |u_\varep|^2.
\end{equation}
\end{lemma}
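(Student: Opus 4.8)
The plan is to prove Lemma \ref{lemma-4.2} by induction on $k$, using Lemma \ref{lemma-4.1} as the base case together with the rescaling property (\ref{rescaling}). The base case $k=1$: if $0<\varep<\varep_0$ then after normalizing by $(\average_{Q_1}|u_\varep|^2)^{1/2}$ (which we may assume is nonzero, else the estimate is trivial), Lemma \ref{lemma-4.1} gives exactly (\ref{4.2-1}) with $k=1$.

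For the inductive step, assume (\ref{4.2-1}) holds for some $k\ge 1$ and all admissible $\varep$, and suppose $0<\varep<\varep_0\theta^k$. The idea is to rescale to bring the problem on $Q_{\theta^k}$ back to $Q_1$. First I would subtract the constant $\average_{Q_{\theta^k}} u_\varep$ from $u_\varep$; since $\big(\partial_t+\mathcal{L}_\varep\big)$ annihilates constants, the shifted function still solves the same equation. Then set
$$
w(x,t) = \frac{u_\varep\big(\theta^k x, \theta^{2k} t\big) - \average_{Q_{\theta^k}} u_\varep}{\left(\average_{Q_{\theta^k}}|u_\varep - \average_{Q_{\theta^k}} u_\varep|^2\right)^{1/2}},
$$
assuming the denominator is nonzero (otherwise $u_\varep$ is constant on $Q_{\theta^k}$ and the target estimate is immediate). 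By (\ref{rescaling}), $w$ is a weak solution of $\big(\partial_t + \mathcal{L}_{\varep/\theta^k}\big) w = 0$ in $Q_1$, and by construction $\average_{Q_1}|w|^2 = 1$ and $\average_{Q_1} w = 0$. Since $\varep/\theta^k < \varep_0$, Lemma \ref{lemma-4.1} applies to $w$ with parameter $\varep/\theta^k$, yielding $\average_{Q_\theta}|w - \average_{Q_\theta} w|^2 \le \theta^{2\alpha}$. Undoing the scaling and using $\average_{Q_{\theta}}|w - \average_{Q_\theta}w|^2 = \big(\average_{Q_{\theta^{k+1}}}|u_\varep - \average_{Q_{\theta^{k+1}}}u_\varep|^2\big)\big/\big(\average_{Q_{\theta^k}}|u_\varep - \average_{Q_{\theta^k}}u_\varep|^2\big)$ (the change of variables $x\mapsto \theta^k x$, $t\mapsto\theta^{2k}t$ maps $Q_\theta$ onto $Q_{\theta^{k+1}}$ and the averaging is scale-invariant, while the additive constant $\average_{Q_\theta}w$ corresponds to $\average_{Q_{\theta^{k+1}}}u_\varep$ after rescaling) gives
$$
\average_{Q_{\theta^{k+1}}}|u_\varep - \average_{Q_{\theta^{k+1}}}u_\varep|^2 \le \theta^{2\alpha}\average_{Q_{\theta^k}}|u_\varep - \average_{Q_{\theta^k}}u_\varep|^2 \le \theta^{2\alpha}\cdot\theta^{2k\alpha}\average_{Q_1}|u_\varep|^2 = \theta^{2(k+1)\alpha}\average_{Q_1}|u_\varep|^2,
$$
where the middle inequality is the induction hypothesis applied to $u_\varep$ with parameter $\varep$ — note $\varep < \varep_0\theta^k \le \varep_0\theta^{k-1}$, so the hypothesis is indeed applicable. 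This closes the induction.

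The only delicate point — and the step I would be most careful about — is bookkeeping the additive constants correctly under rescaling: one must check that $\average_{Q_{\theta^{k+1}}}u_\varep$ (rather than $\average_{Q_{\theta^k}}u_\varep$) is the right constant to subtract in the inner estimate, which follows because $\average_{Q_\theta} w$ pulls back to exactly the average of $u_\varep$ over $Q_{\theta^{k+1}}$ after the change of variables. Everything else is the scale-invariance of normalized $L^2$-oscillation integrals under parabolic dilation $(x,t)\mapsto(\lambda x,\lambda^2 t)$, which is routine. I should also dispose of the degenerate cases ($\average_{Q_1}|u_\varep|^2=0$ or $u_\varep$ constant on some $Q_{\theta^j}$) at the outset so that the normalizations are legitimate.
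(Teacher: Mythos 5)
Your proof is correct and follows essentially the same induction-and-rescaling argument as the paper's. The only cosmetic difference is your choice to normalize $w$ by $\left(\average_{Q_{\theta^k}}|u_\varep-\average_{Q_{\theta^k}}u_\varep|^2\right)^{1/2}$ so that $\average_{Q_1}|w|^2=1$ exactly (applying the induction hypothesis only at the last step), whereas the paper normalizes by $\theta^{-\alpha k}/\left(\average_{Q_1}|u_\varep|^2\right)^{1/2}$ and uses the induction hypothesis up front to verify $\average_{Q_1}|w|^2\le 1$; the two bookkeeping schemes are logically equivalent.
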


\begin{proof}
The lemma is proved by an induction argument on $k$.
The case $k=1$ follows directly from Lemma \ref{lemma-4.1}.

Suppose that the estimate (\ref{4.2-1}) holds for some $k\ge 1$.
Let $\big( \partial_t  +\mathcal{L}_\varep \big) u_\varep =0$ in $Q_1$
and $0<\varep<\varep_0 \theta^k$.
Define
$$
w(x,t)
=\theta^{-\alpha k} \left\{ u_\varep (\theta^k x, \theta^{2k} t)
-\average_{Q_{\theta^k}} u_\varep \right\}/
\left\{ \average_{Q_1} |u_\varep|^2 \right\}^{1/2}.
$$
Then 
$$
\left( \partial_t +\mathcal{L}_{\frac{\varep}{\theta^k}}\right) w =0 \quad \text{ in } Q_1,
$$
and by the induction assumption,
$$
\average_{Q_1} |w|^2\le 1.
$$
Since $\varep/\theta^k <\varep_0$, by Lemma \ref{lemma-4.1}, we obtain
$$
\average_{Q_\theta} |w -\average_{Q_\theta} w|^2 \le \theta^{2\alpha},
$$
which leads to
$$
\average_{Q_{\theta^{k+1}}} |u_\varep
- \average_{Q_{\theta^{k+1}}} u_\varep|^2
\le 
\theta^{2(k+1)\alpha} \average_{Q_1} |u_\varep|^2.
$$
This completes the proof.
\end{proof}

\begin{proof}[\bf Proof of Theorem \ref{theorem-4.1}]

Recall that it suffices to prove
\begin{equation}\label{4.3-1}
\average_{Q_r}
|u_\varep -\average_{Q_r} u_\varep|^2 
\le C r^{2\alpha} \average_{Q_1} |u_\varep|^2
\end{equation}
for $0<r<1/2$, where $u_\varep$ is a weak solution of $\big(\partial_t +\mathcal{L}_\varep\big) u_\varep =0$ in $Q_1$.
To this end we first point out that if $\varep\ge \theta\varep_0$, the estimate (\ref{4.3-1})
follows directly from the  regularity theory for parabolic systems with $VMO_x$ coefficients in \cite{Byun-2007, Krylov-2007}.
See (\ref{krylov-1}) and Remark \ref{remark-2.1}.

Suppose now that $0<\varep< \theta\varep_0$.
Consider the case $\varep/\varep_0 \le r<\theta$.
Choose $k\ge 1$ such that $\theta^{k+1} \le r< \theta^k$.
Since $\theta^k >\varep/\varep_0$, by Lemma \ref{lemma-4.2},
$$
\aligned
\average_{Q_r} | u_\varep -\average_{Q_r} u_\varep|^2
& \le C
\average_{Q_{\theta^k}} | u_\varep -\average_{Q_{\theta^k}} u_\varep|^2\\
& \le C \theta^{2k\alpha}\average_{Q_1} |u_\varep|^2
\le C r^{2\alpha}
\average_{Q_1} |u_\varep|^2.
\endaligned
$$

Finally, we need to handle the case $0<r<\varep/\varep_0$ (the case $\theta<r<1/2$ is trivial).
We use a blow-up argument.
Let 
$$
w(x,t)=u_\varep (\varep x, \varep^2 t)-\average_{Q_{\frac{2\varep}{\varep_0}}} u_\varep.
$$
Note that $\big(\partial_t +\mathcal{L}_1\big) w =0$ in $Q_{2/\varep_0}$.
By the H\"older estimate (\ref{krylov-1})
 for second-order parabolic systems with $VMO_x$ coefficients, we have
$$
\average_{Q_\rho} |w -\average_{Q_\rho} w|^2
\le C \rho^{2\alpha} \average_{Q_{2/\varep_0}} |w|^2.
$$
Hence, if $0<r<\varep/\varep_0$,
$$
\aligned
\average_{Q_r}
|u_\varep -\average_{Q_r} u_\varep|^2
& \le C \left( \frac{r}{\varep}\right)^{2\alpha}
\average_{Q_{2\varep/\varep_0}}
|u_\varep -\average_{Q_{2\varep/\varep_0}} u_\varep|^2\\
& \le C r^{2\alpha}
\average_{Q_1} |u_\varep|^2,
\endaligned
$$
where we have used (\ref{4.3-1}) for $r=2\varep/\varep_0$ in the last inequality.
This completes the proof of Theorem \ref{theorem-4.1}.
\end{proof}


\section{Interior Lipschitz estimates}
\setcounter{equation}{0}

In this section we establish the interior Lipschitz estimates.
This requires some stronger smoothness condition on $A$.
We shall call $A\in \Lambda (\mu, \lambda, \tau)$ if $A$ satisfies the ellipticity and periodicity conditions
(\ref{e1.3})-(\ref{e1.4}) and the H\"older continuity condition,
\begin{equation}\label{Holder-condition}
|A(x,t)-A(y,s)|\le \tau |(x,t)- (y,s)|^\lambda, \quad \text{ for any } (x,t), (y,s)\in \mathbb{R}^{d+1}.
\end{equation}

\begin{theorem}\label{theorem-5.1}
Suppose that $A\in \Lambda(\mu, \lambda, \tau)$.
Let $u_\varep$ be a weak solution of $\big(\partial_t +\mathcal{L}_\varep\big) u_\varep =0$ in $2Q$
for some $Q=Q_r (x_0, t_0)$.
Then
\begin{equation}\label{5.1}
\| u_\varep\|_{C^{1,1/2} (Q)}
\le \frac{C}{r} \left\{ \average_{2Q} |u_\varep|^2 \right\}^{1/2},
\end{equation}
where $C$ depends  at most on $d$, $m$, $\mu$, $\lambda$, and $\tau$.
\end{theorem}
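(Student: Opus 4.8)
The plan is to run the three-step compactness scheme already used for Theorem~\ref{theorem-4.1}, adapted so as to produce a first-order (affine) rather than zeroth-order approximation of $u_\varep$. First I would reduce, by translation and the rescaling property (\ref{rescaling}), to the case $Q=Q_1(0,0)$, and --- as in Remark~\ref{remark-2.1} --- to $0<\varep<\varep_0$ for a small $\varep_0$ to be fixed, since when $\varep$ is bounded below the coefficients $A(x/\varep,t/\varep^2)$ are H\"older continuous with a controlled seminorm and (\ref{5.1}) is then the classical interior Schauder estimate for parabolic systems. By a re-centering/covering argument exactly as in the proof of Theorem~\ref{theorem-4.1}, it then suffices to bound $|\nabla u_\varep(0,0)|$ by $C(\average_{Q_2}|u_\varep|^2)^{1/2}$ for a solution of $(\partial_t+\mathcal{L}_\varep)u_\varep=0$ in $Q_2$; the remaining pieces of the parabolic $C^{1,1/2}$ norm, the $C^{1/2}$-modulus in $t$ included, would then follow from this gradient bound together with the affine approximations produced along the iteration.

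The one-step improvement lemma, playing the role of Lemma~\ref{lemma-4.1}, should read: fix $\sigma\in(0,1)$; then there exist $\theta\in(0,1/4)$ and $\varep_0>0$ depending only on $d$, $m$, $\mu$, $\sigma$ such that, if $w$ solves $\partial_t w-\mathrm{div}(B(x/\delta,t/\delta^2)\nabla w)=0$ in $Q_1$ for some $B$ satisfying (\ref{e1.3})-(\ref{e1.4}), $0<\delta<\varep_0$, and $\average_{Q_1}|w|^2\le1$, then there is $M\in\mathbb{R}^{m\times d}$ with $|M|\le C_1$ and $(\average_{Q_\theta}|w-\average_{Q_\theta}w-Mx|^2)^{1/2}\le\theta^{1+\sigma}$. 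I would prove this by contradiction, just as Lemma~\ref{lemma-4.1}: a failing sequence is bounded in $L^2(-1/4,0;W^{1,2}(B_{1/2}))$ by the Caccioppoli inequality (Lemma~\ref{Cacci-Lemma}), converges strongly in $L^2(Q_{1/2})$ by the Aubin--Lions lemma (Lemma~\ref{embedding-lemma}), and by Theorem~\ref{homo-theorem-2} to a solution $w_0$ of a \emph{constant}-coefficient parabolic system on $Q_{1/2}$ with $\average_{Q_1}|w_0|^2\le1$; since such $w_0$ obeys the interior bound $\|D^2_xw_0\|_{L^\infty(Q_{1/4})}+\|\partial_tw_0\|_{L^\infty(Q_{1/4})}\le C(\average_{Q_{1/2}}|w_0|^2)^{1/2}$, Taylor's theorem gives $(\average_{Q_\theta}|w_0-w_0(0,0)-\nabla_xw_0(0,0)\,x|^2)^{1/2}\le C_0\theta^2$ and $|\nabla_xw_0(0,0)|\le C_0$, so that choosing $\theta$ with $C_0\theta^2<\tfrac12\theta^{1+\sigma}$ produces the contradiction on passing to the limit. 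As in the H\"older case, only ellipticity and periodicity enter this step.

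The remaining work is the iteration (analogue of Lemma~\ref{lemma-4.2}) followed by a blow-up, and the new feature --- the step I expect to be the main obstacle --- is that one cannot simply subtract the affine approximant $a_k+M_kx$ from $u_\varep$ at scale $\theta^k$, since $M_kx$ is not a solution of the oscillating equation; instead, using (\ref{corrector-equation}), I would subtract the corrector-adjusted affine function $a_k+M_kx+\varep\,\chi(x/\varep,t/\varep^2)M_k$, which \emph{is} a solution, and then apply the rescaled one-step lemma to the difference. The discrepancy between the two approximants is $O(\varep\,\|\chi\|_{L^2(Y)}|M_k|)=O(\varep)$ in $L^2$ on the relevant parabolic cylinder, with $\|\chi\|_{L^2(Y)}\le C(d,m,\mu)$ from the energy estimate for the cell problem (\ref{cell-problem}); the hard part will be to propagate this $O(\varep)$ error through the rescalings and check that it accumulates into a convergent geometric series, so that the slopes $M_k$ converge (with $|M_k|\le C$ uniformly) and the scheme can be iterated down to scales of order $\varep$. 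At that point I would blow up: $v(x,t)=u_\varep(\varep x,\varep^2t)-\average_{Q_\varep}u_\varep$ solves $(\partial_t+\mathcal{L}_1)v=0$ in a unit-size cylinder about the origin, and since $A\in\Lambda(\mu,\lambda,\tau)$ the classical interior Schauder estimate for $\partial_t+\mathcal{L}_1$ gives $|\nabla v(0,0)|\le C(\average_{Q_1}|v|^2)^{1/2}$; combined with $(\average_{Q_\varep}|u_\varep-\average_{Q_\varep}u_\varep|^2)^{1/2}\le C\varep(\average_{Q_2}|u_\varep|^2)^{1/2}$ coming from the iteration, this yields $|\nabla u_\varep(0,0)|=\varep^{-1}|\nabla v(0,0)|\le C(\average_{Q_2}|u_\varep|^2)^{1/2}$. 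Since the operator $\partial_t+\mathcal{L}_1$ appearing in the blow-up has H\"older seminorm exactly $\tau$, every constant is uniform in $\varep$, and undoing the initial reductions gives (\ref{5.1}). It is precisely this base-case regularity below scale $\varep$ --- the interior Schauder theory for $\partial_t+\mathcal{L}_1$ --- that forces the H\"older continuity hypothesis $A\in\Lambda(\mu,\lambda,\tau)$, the remaining compactness steps needing only the $L^2(Y)$ bound on the correctors.
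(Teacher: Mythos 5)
Your outline follows the same three-step compactness scheme (one-step improvement, iteration, blow-up at scale $\varep$) that the paper uses, and the blow-up step and the reason for the H\"older hypothesis are identified exactly right. The one place where you diverge from the paper's proof --- and where you correctly anticipate trouble --- is the formulation of the one-step lemma. You state it as producing a bare affine approximant $Mx$, and then note that $Mx$ cannot simply be subtracted in the iteration since it does not solve the oscillating equation; your proposed fix is to subtract the corrector-adjusted $a_k + M_k x + \varep\,\chi(x/\varep,t/\varep^2)M_k$ instead, which introduces an $O(\varep)$ discrepancy between what the lemma gives and what you subtract, and you leave open whether these errors accumulate acceptably.

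The paper's Lemma~\ref{lemma-5.1} sidesteps this bookkeeping entirely by building the corrector into the one-step lemma from the start: the conclusion is a smallness estimate for
\[
u_\varep(x,t)-u_\varep(0,0)-\bigl\{P_\ell^\beta(x)+\varep\,\chi_\ell^\beta(x/\varep,t/\varep^2)\bigr\}\average_{Q_\theta}\partial_{x_\ell}u_\varep^\beta
\]
on $Q_\theta$, not for $u_\varep - a - Mx$. Since $P_\ell^\beta+\varep\chi_\ell^\beta(\cdot/\varep,\cdot/\varep^2)$ is an exact solution by (\ref{corrector-equation}), the subtracted quantity is itself a solution, the rescaled function in the induction step solves $(\partial_t+\mathcal{L}_{\varep/\theta^k})w=0$ with no extraneous forcing, and there is no $O(\varep)$ error to propagate --- Lemma~\ref{lemma-5.2} iterates cleanly with $b(\varep,k)$ and $E(\varep,k)$ building up as geometric series. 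In the contradiction proof of the one-step lemma the extra corrector term costs nothing: $\varep_k\chi^{k,\beta}_\ell(\cdot/\varep_k,\cdot/\varep_k^2)\to 0$ (using $\varep_k\to 0$ and boundedness of $\chi^k$, which in the paper comes from the interior H\"older estimates of Theorem~\ref{theorem-4.1}), so the limit problem is still just the constant-coefficient one you describe. A second minor difference: the paper works with $\sup$-norms in Lemmas~\ref{lemma-5.1}--\ref{lemma-5.2}, which requires upgrading the $L^2$ compactness to uniform convergence on $Q_{1/4}$ via Theorem~\ref{theorem-4.1}; your $L^2$-averaged formulation would avoid that step but would need the $L^\infty$ estimate (\ref{L-infty}) later to obtain the pointwise bound used in the blow-up. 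Either variant works, but if you formulate the one-step lemma with the corrector term included --- a change that costs nothing in the compactness argument --- the error-propagation issue you flag disappears, which is precisely how the paper handles it.

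For completeness: the passage from the pointwise gradient bound $|\nabla u_\varep(0,0)|\le Cr^{-1}\|u_\varep\|_{L^\infty}$ to the full $C^{1,1/2}(Q)$ norm in (\ref{5.1}) is carried out in the paper not via the affine approximants from the iteration but by Lemma~\ref{Poincare-lemma} (to bound $\average_{Q_\rho}|u_\varep-\average_{Q_\rho}u_\varep|^2$ by $\rho^2\|\nabla u_\varep\|_\infty^2$) followed by the Campanato characterization, Lemma~\ref{Cam-Lemma}; your description of this step is correct in spirit but this concrete route is worth noting.
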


As in the case of H\"older estimates, Theorem \ref{theorem-5.1} is also proved by a three-step
compactness argument. Our proof follows closely  the elliptic case in \cite{AL-1987}.

Recall that $Q_r=Q_r (0,0)=B(0,r)\times (-r^2, 0)$, $P_\ell^\beta (x)=x_\ell (0, \dots, 1, \dots)$
with $1$ in the $\beta^{th}$ position, and $(\chi_\ell^\beta)$ are the correctors defined by (\ref{cell-problem}).

\begin{lemma}\label{lemma-5.1}
There exist constants $\varep_0$ and $\theta\in (0,1/4)$,
depending only on $d$, $m$, $\mu$, $\lambda$, and $\tau$, such that for
$0<\varep<\varep_0$,
\begin{equation}\label{5.1-1}
\aligned
\sup_{(x,t)\in Q_\theta}
\big|
u_\varep (x, t)-u_\varep(0,0)
-& \left\{ P_\ell^\beta (x) +\varep \chi^\beta_\ell (x/\varep, t/\varep^2)\right\} \average_{Q_\theta}
\frac{\partial u_\varep^\beta}{\partial x_\ell} \big|\\
& \le \theta^{3/2} 
\| u_\varep\|_{L^\infty(Q_1)},
\endaligned
\end{equation}
whenever $u_\varep$ is a weak solution of $\big(\partial_t +\mathcal{L}_\varep \big) u_\varep =0$ in $Q_2$.
\end{lemma}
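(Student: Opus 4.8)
The plan is to carry out the first step of the three-step compactness scheme, arguing by contradiction. First I would \emph{fix the dilation factor $\theta$} using only the regularity of the limiting, constant-coefficient operator. If $A^0$ is any constant matrix satisfying the ellipticity condition (\ref{ellipticity-1}) and $v$ is a weak solution of $\partial_t v-\text{div}(A^0\nabla v)=0$ in $Q_{1/2}$, then $v$ is smooth and the interior estimates for constant-coefficient parabolic systems bound $\sup_{Q_{1/4}}\big(|\nabla_x v|+|\nabla_x^2 v|+|\partial_t v|+|\partial_t\nabla_x v|\big)$ by $C(\average_{Q_{1/2}}|v|^2)^{1/2}$ with $C=C(d,m,\mu)$ (the upper ellipticity constant of $A^0$ is $\mu_1=\mu_1(d,m,\mu)$ by Remark \ref{remark-3.1}). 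Writing $v(x,t)-v(0,0)=\big(v(x,t)-v(0,t)\big)+\big(v(0,t)-v(0,0)\big)$, using $|x|<\theta$ and $|t|<\theta^2$ on $Q_\theta$, together with $|\average_{Q_\theta}\nabla v-\nabla v(0,0)|\le C\theta(\average_{Q_{1/2}}|v|^2)^{1/2}$, a parabolic Taylor expansion at the origin gives
\begin{equation}\label{5.1-const}
\sup_{(x,t)\in Q_\theta}\Big|v(x,t)-v(0,0)-P_\ell^\beta(x)\,\average_{Q_\theta}\frac{\partial v^\beta}{\partial x_\ell}\Big|\le C_0\,\theta^2\,\Big(\average_{Q_{1/2}}|v|^2\Big)^{1/2}
\end{equation}
for every $\theta\in(0,1/4)$, with $C_0=C_0(d,m,\mu)$. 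I then fix $\theta\in(0,1/4)$ so small that $C_0\,\theta^{1/2}<1$; note $\theta$ depends only on $d,m,\mu$.

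Next, suppose the conclusion fails for this $\theta$ and every $\varepsilon_0>0$. Since (\ref{5.1-1}) is homogeneous in $u_\varepsilon$, I may normalize so as to obtain sequences $\varepsilon_k\to0$, matrices $A_k\in\Lambda(\mu,\lambda,\tau)$, and weak solutions $u_k$ of $\partial_t u_k-\text{div}\big(A_k(x/\varepsilon_k,t/\varepsilon_k^2)\nabla u_k\big)=0$ in $Q_2$ with $\|u_k\|_{L^\infty(Q_1)}\le1$ (so $\average_{Q_{1/2}}|u_k|^2\le1$) for which the left-hand side of (\ref{5.1-1}) exceeds $\theta^{3/2}$. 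By the Caccioppoli inequality (Lemma \ref{Cacci-Lemma}), $\{\nabla u_k\}$ is bounded in $L^2(Q_{1/2})$; by the uniform interior H\"older estimate (Theorem \ref{theorem-4.1}, whose constant depends only on $d,m,\mu$ and the $VMO_x$ modulus $\omega$, which is uniform over $\Lambda(\mu,\lambda,\tau)$), $\{u_k\}$ is bounded in $C^{\alpha,\alpha/2}(\overline{Q_{1/4}})$ for a fixed $\alpha\in(0,1)$. Passing to a subsequence, I may assume $u_k\to u$ uniformly on $\overline{Q_{1/4}}$, $\nabla u_k\rightharpoonup\nabla u$ weakly in $L^2(Q_{1/2})$, and $\widehat{A_k}\to A^0$ for a constant matrix $A^0$ (bounded by $\mu_1$). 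By Theorem \ref{homo-theorem-2}, $u$ is a weak solution of $\partial_t u-\text{div}(A^0\nabla u)=0$ in $Q_{1/2}$ with $A^0$ satisfying (\ref{ellipticity-1}); moreover $(\average_{Q_{1/2}}|u|^2)^{1/2}\le1$ by lower semicontinuity. Finally, from energy estimates for the cell problem (\ref{cell-problem}) for $A_k$, Poincar\'e's inequality, and the uniform interior $L^\infty$ bound of Theorem \ref{theorem-4.1} applied to $\chi_\ell^\beta+P_\ell^\beta$ together with periodicity, the correctors $\chi_\ell^\beta$ for $A_k$ are bounded in $L^\infty(\mathbb{R}^{d+1})$ uniformly in $k$, whence $\sup_{Q_\theta}\big|\varepsilon_k\chi_\ell^\beta(x/\varepsilon_k,t/\varepsilon_k^2)\big|\le C\varepsilon_k\to0$.

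I would then pass to the limit. The left-hand side of (\ref{5.1-1}) for $u_k$ equals $\sup_{Q_\theta}|f_k|$, where
$$
f_k(x,t)=u_k(x,t)-u_k(0,0)-\big\{P_\ell^\beta(x)+\varepsilon_k\chi_\ell^\beta(x/\varepsilon_k,t/\varepsilon_k^2)\big\}\,\average_{Q_\theta}\frac{\partial u_k^\beta}{\partial x_\ell}.
$$
Since $u_k\to u$ uniformly on $\overline{Q_\theta}$ (in particular $u_k(0,0)\to u(0,0)$), $\varepsilon_k\chi_\ell^\beta(\cdot/\varepsilon_k,\cdot/\varepsilon_k^2)\to0$ uniformly, and $\average_{Q_\theta}\partial_\ell u_k^\beta\to\average_{Q_\theta}\partial_\ell u^\beta$ (testing the weak $L^2$ convergence of $\nabla u_k$ against $\mathbf 1_{Q_\theta}$), the $f_k$ converge uniformly on $\overline{Q_\theta}$ to $f(x,t)=u(x,t)-u(0,0)-P_\ell^\beta(x)\average_{Q_\theta}\partial_\ell u^\beta$. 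Hence $\sup_{Q_\theta}|f|\ge\theta^{3/2}$. On the other hand, applying (\ref{5.1-const}) to $v=u$ gives $\sup_{Q_\theta}|f|\le C_0\theta^2<\theta^{3/2}$, a contradiction. This proves the lemma, with $\theta$ depending only on $d,m,\mu$ and $\varepsilon_0$ on $d,m,\mu,\lambda,\tau$.

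The step I expect to be the main obstacle is this passage to the limit: one must upgrade the weak $L^2$ convergence $u_k\to u$ to uniform convergence on $\overline{Q_\theta}$ in order to move the supremum through the limit, and likewise one needs the correctors of $A_k$ to be bounded in $L^\infty$ \emph{uniformly} in $k$ so that the $\varepsilon_k\chi_\ell^\beta$-term vanishes. Both of these rely essentially on the uniform-in-$\varepsilon$ regularity established in Theorem \ref{theorem-4.1}, since the $\varepsilon$-dependent Schauder theory available for the H\"older coefficients $A_k$ does not come with uniform constants. A secondary point requiring care is the anisotropic parabolic scaling in the Taylor expansion of the limiting solution, which is precisely why the gain $\theta^{3/2}$ in the statement — weaker than the $\theta^2$ one actually obtains — suffices for the iteration in the next lemma.
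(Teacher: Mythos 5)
Your proposal is correct and follows essentially the same three-step compactness argument as the paper: fix $\theta$ via the Taylor-expansion estimate for the constant-coefficient limit, argue by contradiction using Theorem \ref{homo-theorem-2} together with the uniform interior H\"older bounds of Theorem \ref{theorem-4.1} to pass to the limit (uniform convergence, weak $L^2$ convergence of gradients, and uniform $L^\infty$ bounds on the correctors), and then contradict the choice of $\theta$. The only differences are cosmetic — you spell out the derivation of the constant-coefficient estimate and use the bound $\average_{Q_{1/2}}|u|^2\le 1$ directly from weak lower semicontinuity, giving the slightly cleaner threshold $C_0\theta^{1/2}<1$ in place of the paper's $2^d C_0\theta^2<\theta^{3/2}$, but the argument is otherwise identical.
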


\begin{proof}
As in the case of Lemma \ref{lemma-4.1},
Lemma \ref{lemma-5.1}  is proved by contradiction, using Theorem \ref{homo-theorem-2} as well as the fact that
for any $\theta\in (0,1/4)$,
\begin{equation}\label{5.1-2}
\sup_{(x,t)\in Q_\theta}
\big| u(x,t)-u(0,0)-x_\ell \average_{Q_\theta} \frac{\partial u}{\partial x_\ell} \big|
\le C_0 \theta^2 \left\{ \average_{Q_{1/2}} |u|^2\right\}^{1/2},
\end{equation}
where $u$ is a  solution of $\partial_t u -\text{div}(A^0\nabla u) =0$ in $Q_{1/2}$,
and the constant matrix $A^0$ satisfies the ellipticity condition
(\ref{ellipticity-1}). The estimate (\ref{5.1-2}) follows easily from the standard regularity estimate,
$$
\| \nabla^2 u\|_{L^\infty (Q_{1/4})} +\| \partial_t u\|_{L^\infty(Q_{1/4})}
\le C \| u\|_{L^2(Q_{1/2})},
$$
for solutions of second-order parabolic systems with constant coefficients. 
The constant $C_0$ in (\ref{5.1-2}) depends only on $d$, $m$, and $\mu$.

Choose $\theta\in (0,1/4)$ so small that $2^{d} C_0\theta^2<\theta^{3/2}$.
We claim that the estimate (\ref{5.1-1}) holds for this $\theta$ and some $\varep_0>0$,
which depends only on $d$, $m$, $\mu$, $\lambda$, and $\tau$.

Suppose this is not the case. Then there exist sequences $\{\varep_k\}$, 
$\{A_k\}\subset \Lambda (\mu, \lambda, \tau)$, and
$\{ u_k\}$ such that $\varep_k\to 0$,
\begin{equation}\label{5.1-3}
\left\{
\aligned
& \partial_t u_k -\text{div} \big(A_k (x/\varep_k, t/\varep_k^2)\nabla u_k \big) =0 \quad \text{ in } Q_1,\\
& \| u_k\|_{L^\infty (Q_1)}\le 1,
\endaligned
\right.
\end{equation}
and
\begin{equation}\label{5.1-4}
\sup_{(x,t)\in Q_\theta}
\big| u_k (x,t)-u_k (0,0)- 
 \left\{ P_\ell ^\beta (x)
+\varep_k \chi_{\ell}^{k, \beta} (x/\varep_k, t/\varep_k^2) \right\}
\average_{Q_\theta} \frac{\partial u_k^\beta}{\partial x_\ell} \big|
 >\theta^{3/2},
\end{equation}
where $\chi_\ell^{k, \beta}$ are the correctors associated with the periodic matrix $A_k$.
By passing to subsequences, as in the proof of Lemma \ref{lemma-4.1},
we may assume that $u_k \to u$ weakly in $L^2(Q_1)$ and
$\nabla u_k \to \nabla u$ weakly in $L^2(Q_{1/2})$.
Observe that by Theorem \ref{homo-theorem-2}, the function $u$ 
is a solution of $\partial_t u -\text{div}(A^0\nabla u)=0$ in $Q_{1/2}$ for some
constant matrix $A^0$ satisfying (\ref{ellipticity-1}).
Consequently, it satisfies the estimate (\ref{5.1-2}).

Finally, we note that by Theorem \ref{theorem-4.1}, the sequence $\{ u_k \}$
is bounded in $C^{\alpha, \alpha/2}(Q_{1/4})$ for any $\alpha \in (0,1)$.
Thus, by passing to a subsequence, we may assume that
$u_k \to u$ uniformly on $Q_{1/4}$.
This allows us to take the limit in $k$ in (\ref{5.1-4}).
Indeed, since $\{ \chi_\ell^{k, \beta}\}$ is bounded in $L^\infty (\mathbb{R}^{d+1})$ and
$$
\average_{Q_\theta} \frac{\partial u_k}{\partial x_\ell} \to \average_{Q_\theta}
\frac{\partial u}{\partial x_\ell},
$$
we obtain
\begin{equation}\label{5.1-5}
\sup_{(x,t)\in Q_\theta}
\big| u(x,t)-u(0,0)-x_\ell \average_{Q_\theta} \frac{\partial u}{\partial x_\ell} \big|
\ge \theta^{3/2}.
\end{equation}
Also, since $u_k \to u$ weakly in $L^2(Q_1)$ and $\| u_k\|_{L^\infty(Q_1)} \le 1$, we have
$\int_{Q_1} |u|^2 \le |Q_1|$. 
It then follows from (\ref{5.1-5}) and (\ref{5.1-2}) that
$$
\theta^{3/2}
\le C_0 \theta^2 \left\{ \average_{Q_{1/2}} |u|^2\right\}^{1/2}
\le 2^{(d+2)/2} C_0 \theta^2 \left\{ \average_{Q_1} |u|^2\right\}^{1/2}
\le 2^{d} C_0 \theta^2,
$$
which is in contradiction with the choice of $\theta$.
This completes the proof.
\end{proof}

\begin{lemma}\label{lemma-5.2}
Let $\varep_0$ and $\theta$ be given by Lemma \ref{lemma-5.1}.
Suppose that $\big(\partial_t +\mathcal{L}_\varep\big) u_\varep =0$ in $Q_1$
and $0<\varep<\theta^{k-1}\varep_0$ for some $k\ge 1$.
Then there exist constants $b(\varep, k)\in \mathbb{R}$ and $E(\varep, k)
=(E_\ell^\beta (\varep, k)) \in \mathbb{R}^{dm}$, such that
$$
\left\{
\aligned
& |b(\varep, k)|\le C |E(\varep, k-1)|,\\
& |E(\varep, k)|\le C \left\{ 1+\theta^{1/2} +\cdots + \theta^{(k-1)/2}\right\} \| u_\varep \|_{L^\infty (Q_1)},
\endaligned
\right.
$$
and
\begin{equation}\label{5.2-1}
\aligned
\sup_{(x,t)\in Q_{\theta^k}}
\big| u_\varep (x,t) - u_\varep (0,0)-\varep b(\varep, k)
- & \left\{
P_\ell^\beta (x) + \varep \chi_\ell^\beta (x/\varep, t/\varep^2) \right\}  E_\ell^\beta (\varep, k) \big|\\
&\le
\theta^{3k/2} \| u_\varep\|_{L^\infty(Q_1)},
\endaligned
\end{equation}
where $C$ depends only on $d$, $m$, $\mu$, $\lambda$, and $\tau$.
\end{lemma}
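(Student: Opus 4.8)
The plan is to prove Lemma \ref{lemma-5.2} by induction on $k$, with Lemma \ref{lemma-5.1} serving as the base case $k=1$ (taking $b(\varep,1)=0$ and $E_\ell^\beta(\varep,1)=\average_{Q_\theta}\partial_{x_\ell}u_\varep^\beta$, and using the interior Lipschitz control of the correctors plus Theorem \ref{theorem-4.1} to bound $|E(\varep,1)|$ by $C\|u_\varep\|_{L^\infty(Q_1)}$). For the inductive step, suppose \eqref{5.2-1} holds at level $k$ for all solutions with $0<\varep<\theta^{k-1}\varep_0$, and let $(\partial_t+\mathcal{L}_\varep)u_\varep=0$ in $Q_1$ with $0<\varep<\theta^k\varep_0$. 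The key device is the rescaling property \eqref{rescaling}: set
\[
w(x,t)=\frac{1}{\theta^{3k/2}\|u_\varep\|_{L^\infty(Q_1)}}\Big\{u_\varep(\theta^k x,\theta^{2k}t)-u_\varep(0,0)-\varep\, b(\varep,k)-\big[P_\ell^\beta(\theta^k x)+\varep\chi_\ell^\beta(\theta^k x/\varep,\theta^{2k}t/\varep^2)\big]E_\ell^\beta(\varep,k)\Big\}.
\]
Because $P_\ell^\beta+\varep\chi_\ell^\beta(\cdot/\varep,\cdot/\varep^2)$ solves $(\partial_t+\mathcal{L}_\varep)(\cdot)=0$ by \eqref{corrector-equation}, the affine-plus-corrector term is annihilated by the operator, so $w$ satisfies $(\partial_t+\mathcal{L}_{\varep/\theta^k})w=0$ in $Q_1$; note $\varep/\theta^k<\varep_0$, so Lemma \ref{lemma-5.1} applies to $w$. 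Moreover \eqref{5.2-1} at level $k$ says exactly that $\|w\|_{L^\infty(Q_1)}\le 1$.

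Applying Lemma \ref{lemma-5.1} to $w$ with the new parameter $\varep'=\varep/\theta^k$ gives
\[
\sup_{(x,t)\in Q_\theta}\Big|w(x,t)-w(0,0)-\big\{P_\ell^\beta(x)+\varep'\chi_\ell^\beta(x/\varep',t/\varep'^2)\big\}\,\average_{Q_\theta}\partial_{x_\ell}w^\beta\Big|\le\theta^{3/2}.
\]
Now I undo the rescaling. Write $F_\ell^\beta=\theta^{3k/2}\|u_\varep\|_{L^\infty(Q_1)}\,\average_{Q_\theta}\partial_{x_\ell}w^\beta$; a change of variables shows $\average_{Q_\theta}\partial_{x_\ell}w^\beta$ equals (up to the scaling constants) the average of $\partial_{x_\ell}u_\varep^\beta$ over $Q_{\theta^{k+1}}$ minus $E_\ell^\beta(\varep,k)$, so one is led to \emph{define}
\[
E_\ell^\beta(\varep,k+1)=E_\ell^\beta(\varep,k)+F_\ell^\beta,\qquad \varep\, b(\varep,k+1)=\varep\, b(\varep,k)+\big[\text{the constant terms produced by }w(0,0)\text{ and }\chi\text{ at }0\big].
\]
The corrector term $\varep'\chi_\ell^\beta(x/\varep',t/\varep'^2)$ in the rescaled inequality, after multiplying back by $\theta^{3k/2}\|u_\varep\|_{L^\infty}$ and substituting $x\mapsto x/\theta^k$, becomes $\varep\chi_\ell^\beta(x/\varep,t/\varep^2)$ times $F_\ell^\beta/(\theta^{3k/2}\|u_\varep\|_{L^\infty})$ — i.e.\ it recombines with the old corrector term to give $\varep\chi_\ell^\beta(x/\varep,t/\varep^2)E_\ell^\beta(\varep,k+1)$, while the value $\chi_\ell^\beta(0)$ (which need not be zero) is absorbed into the scalar $\varep\, b(\varep,k+1)$. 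This is precisely why the statement carries the extra $\varep b(\varep,k)$ term: it collects the corrector's value at the base point across iterations. Unraveling \eqref{5.1-1} for $w$ then yields \eqref{5.2-1} at level $k+1$ with the factor $\theta^{3(k+1)/2}\|u_\varep\|_{L^\infty(Q_1)}$.

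It remains to verify the two bookkeeping estimates. For $|b(\varep,k+1)|$: the increment is controlled by $|w(0,0)|\cdot(\text{scaling})$ plus $|\chi_\ell^\beta(0)|\,|\average_{Q_\theta}\partial_{x_\ell}w^\beta|$, and since $\|\chi\|_{L^\infty}$ is bounded (by the $VMO_x$ / boundedness theory, or by Theorem \ref{theorem-4.1} applied to the corrector equation) and $\average_{Q_\theta}\partial_{x_\ell}w$ is controlled by $\|w\|_{L^2(Q_{1/2})}\le C$ via the Caccioppoli inequality Lemma \ref{Cacci-Lemma}, one gets $|b(\varep,k+1)|\le C|E(\varep,k)|$, matching the claimed form $|b(\varep,k+1)|\le C|E(\varep,k)|$ (note the index shift $k+1$ vs.\ $k$). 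For $|E(\varep,k+1)|$: from $E(\varep,k+1)=E(\varep,k)+F$ and $|F_\ell^\beta|\le C\theta^{3k/2}\|u_\varep\|_{L^\infty}\,\theta^{-k}\cdot\theta^{k/2}$-type scaling — more carefully, $|F|\le C\theta^{k/2}\|u_\varep\|_{L^\infty(Q_1)}$ after accounting for how the gradient scales — the geometric-series bound $|E(\varep,k+1)|\le C\{1+\theta^{1/2}+\cdots+\theta^{k/2}\}\|u_\varep\|_{L^\infty(Q_1)}$ follows by summing. The main obstacle is getting the scaling exponents exactly right: one must track how $\nabla$, the corrector amplitude $\varep$, and the normalization $\theta^{3k/2}$ transform under $(x,t)\mapsto(\theta^k x,\theta^{2k}t)$ and $\varep\mapsto\varep/\theta^k$ simultaneously, and confirm that the "$3/2$ gain per step" in Lemma \ref{lemma-5.1} outpaces the "$1$ loss per step" from differentiating, leaving the net $\theta^{k/2}$ summand that makes the series for $E$ converge. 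Everything else is routine: a change of variables, the triangle inequality, and the uniform bounds on correctors and on $\average\partial w$.
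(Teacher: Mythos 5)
Your approach is the paper's: induction on $k$, rescaling to $w$ solving $\big(\partial_t+\mathcal{L}_{\varep/\theta^k}\big)w=0$ in $Q_1$, applying Lemma \ref{lemma-5.1} to $w$, and undoing the change of variables. Normalizing $w$ by $\theta^{3k/2}\|u_\varep\|_{L^\infty(Q_1)}$ is only a cosmetic variant of the paper's un-normalized $w$. However, the two pieces of bookkeeping you flag as ``the main obstacle'' are left unresolved, and they are exactly what determines the final form of the bounds, so let me make them explicit.

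First, when you substitute $x=\theta^{-k}y,\ t=\theta^{-2k}s$ back into the output of Lemma \ref{lemma-5.1}, the term $P_\ell^\beta(x)+\varep'\chi_\ell^\beta(x/\varep',t/\varep'^2)$ becomes $\theta^{-k}\big\{P_\ell^\beta(y)+\varep\chi_\ell^\beta(y/\varep,s/\varep^2)\big\}$ (since $P_\ell^\beta(\theta^{-k}y)=\theta^{-k}P_\ell^\beta(y)$ and $\varep'=\varep\theta^{-k}$), so the correct update is $E_\ell^\beta(\varep,k+1)=E_\ell^\beta(\varep,k)+\theta^{-k}\average_{Q_\theta}\partial_{x_\ell}\widetilde w^\beta$, with $\widetilde w$ the un-normalized error. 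Combined with the divergence-theorem bound $\big|\average_{Q_\theta}\partial_{x_\ell}\widetilde w\big|\le C\|\widetilde w\|_{L^\infty(Q_\theta)}\le C\,\theta^{3k/2}\|u_\varep\|_{L^\infty(Q_1)}$ this gives the increment $\le C\theta^{k/2}\|u_\varep\|_{L^\infty(Q_1)}$; your stated $F$ and the update $E(\varep,k+1)=E(\varep,k)+F$ are each off by $\theta^{\pm k}$, though the errors cancel in the final exponent.

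Second, and more importantly, the recursion for $b$ is \emph{not} accumulative. Writing $\widetilde w(0,0)=-\varep b(\varep,k)-\varep\chi_\ell^\beta(0,0)E_\ell^\beta(\varep,k)$, the term $-\varep b(\varep,k)$ inside $\widetilde w(\theta^{-k}y,\theta^{-2k}s)$ is cancelled exactly by $+\varep b(\varep,k)$ coming from $-\widetilde w(0,0)$, leaving $b(\varep,k+1)=-\chi_\ell^\beta(0,0)\,E_\ell^\beta(\varep,k)$. This cancellation is precisely what yields the stated non-accumulating bound $|b(\varep,k+1)|\le C|E(\varep,k)|$; if, as you write, $\varep b(\varep,k+1)=\varep b(\varep,k)+(\text{increment})$, induction would only give $|b(\varep,k)|\lesssim\sum_{j<k}|E(\varep,j)|$, which is not what the lemma asserts. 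With these two points fixed, the argument is the same as the paper's.
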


\begin{proof}
As in the case of Lemma \ref{lemma-4.2}, Lemma \ref{lemma-5.2}
is proved by an induction argument on $k$. First, set $E(\varep, 0)=0$.
If $k=1$, we may use Lemma \ref{lemma-5.1} to see that the estimate (\ref{5.2-1})
holds with 
$$
b(\varep, 1) =0 \quad \text{ and } \quad
E_\ell^\beta (\varep, 1) =\average_{Q_\theta} \frac{\partial u_\varep^\beta}{\partial x_\ell}.
$$

Suppose now that the estimate (\ref{5.2-1}) holds for some $k\ge 1$.
Let $0<\varep<\theta^k \varep_0$ and $\big(\partial_t  +\mathcal{L}_\varep\big) u_\varep =0$ in $Q_1$.
Define
$$
w(x,t)=u_\varep (\theta^kx,\theta^{2k} t) -u_\varep (0,0)-\varep b(\varep, k)
-\theta^k \left\{ P_\ell^\beta (x) +\varep \theta^{-k} \chi_\ell^\beta (\theta^k x/\varep, \theta^{2k} t/\varep^2) \right\}
 E_\ell^\beta (\varep, k).
$$
Then, by the rescaling property (\ref{rescaling}),
$$
\left( \partial_t +\mathcal{L}_{\frac{\varep}{\theta^k}} \right) w =0 \quad \text{ in } Q_1.
$$
Since $\varep/\theta^k<\varep_0$, it follows from Lemma \ref{lemma-5.1} that
\begin{equation}\label{5.2-2}
\aligned
\sup_{(x,t)\in Q_\theta}
\big| w(x,t) -w(0,0) -& \left\{ P_\ell^\beta (x) +\varep\theta^{-k} \chi_\ell^\beta (\theta^kx/\varep, \theta^{2k} t/\varep^2)\right\}
 \average_{Q_\theta} \frac{\partial w^\beta}{\partial x_\ell} \big|\\
&\le \theta^{3/2} \| w\|_{L^\infty(Q_1)}.
\endaligned
\end{equation}
Note that by the induction assumption,
$$
\aligned
&\| w\|_{L^\infty(Q_1)}\\
&=\sup_{(x,t)\in Q_{\theta^k}}
\big| u_\varep (x,t) -u_\varep (0,0)-\varep b(\varep, k)
-  \left\{ P_\ell^\beta (x) +\varep \chi_\ell^\beta (x/\varep, t/\varep^2) \right\} E_\ell^\beta (\varep, k) \big|\\
&\le \theta^{3k/2} \| u_\varep \|_{L^\infty (Q_1)}.
\endaligned
$$
In view of (\ref{5.2-2}) this yields
$$
\aligned
\sup_{Q_{\theta^{k+1}}}
\big| u_\varep (x,t)-u_\varep(0,0) -\varep b(\varep, k+1)
-& \left\{ P_\ell^\beta (x) + \varep \chi_\ell^\beta (x/\varep, t/\varep^2) \right\} E_\ell^\beta (\varep, k+1) \big|\\
&\le \theta^{3(k+1)/2} 
\| u_\varep\|_{L^\infty (Q_1)},
\endaligned
$$
where we have set
$$
\aligned
b(\varep, k+1)  & =-\chi_\ell^\beta (0,0) E_\ell^\beta(\varep, k),\\
E_\ell^\beta (\varep, k+1) & =E_\ell^\beta (\varep, k) + \theta^{-k}
\average_{Q_\theta} \frac{\partial w^\beta }{\partial x_\ell}.
\endaligned
$$
Since $\|\chi\|_\infty\le C$, we see that $|b(\varep, k+1)|\le C | E(\varep, k)|$.

Finally, we observe that by the divergence theorem,
$$
\big| \average_{Q_\theta} \frac{\partial w}{\partial x_\ell }\big|
\le C \| w\|_{L^\infty(Q_\theta)}
\le C \theta^{3k/2} \| u_\varep \|_{L^\infty(Q_1)}.
$$
Hence,
$$
\aligned
|E(\varep, k+1)| & \le |E(\varep, k)| + C \theta^{k/2} \| u_\varep \|_{L^\infty(Q_1)}\\
&\le C \left\{ 1+\theta^{1/2} +\cdots + \theta^{k/2} \right\} \| u_\varep \|_{L^\infty(Q_1)}.
\endaligned
$$
This completes the induction argument and thus the proof.
\end{proof}

\begin{remark}\label{remark-5.1}
{\rm An inspection of the proof of Lemmas \ref{lemma-5.1} and \ref{lemma-5.2} shows that both lemmas
continue to hold under the weaker smoothness condition $A\in VMO_x$.
Indeed, the proof uses the interior H\"older estimates and rely on the fact that the correctors $\chi_\ell^\beta$ are bounded.
Furthermore, if $A^\# (r)\le \omega(r)$, where $\omega(r)$ is an increasing function
on $(0,1)$ such that $\lim_{r\to 0^+} \omega (r)=0$, 
then the constants $\theta$, $\varep_0$ and $C$ in Lemmas \ref{lemma-5.1} and \ref{lemma-5.2}
depend at most on $d$, $m$, $\mu$, and the function $\omega(r)$.
This observation will be used in the next section.
}
\end{remark}

\begin{proof}[\bf Proof of Theorem \ref{theorem-5.1}]
We will show that if $\big(\partial_t + \mathcal{L}_\varep \big) u_\varep =0 $ in $Q=Q_r (x_0, t_0)$, then
\begin{equation}\label{5.3-1}
|\nabla u_\varep (x_0, t_0)| \le C r^{-1} \| u_\varep \|_{L^\infty(Q)}.
\end{equation}
This, together with the $L^\infty$ estimate (\ref{L-infty}) in Theorem \ref{theorem-4.1}, implies that
if $\big(\partial_t +\mathcal{L}_\varep\big) u_\varep =0$ in $2Q$, then
$$
\|\nabla u_\varep \|_{L^\infty(Q)} \le \frac{C}{r}
\left\{ \average_{2Q} |u_\varep|^2\right\}^{1/2},
$$
where $Q=Q_r (x_0, t_0)$. 
It then follows from the inequality (\ref{Poincare}) that 
$$
\average_{Q_\rho (x,t)}
|u_\varep - \average_{Q_\rho(x,t)} u_\varep|^2
  \le C \rho^2 \average_{Q_\rho (x,t)} |\nabla u_\varep|^2\\
  \le C \left(\frac{\rho}{r}\right)^2 \average_{2Q} |u_\varep|^2,
 $$
 for any $(x,t)\in Q$ and $0<\rho<r/2$.
 By Lemma \ref{Cam-Lemma} we obtain
 $$
 \| u_\varep\|_{C^{1,1/2}(Q)} \le \frac{C}{r} \left\{ \average_{2Q} |u_\varep|^2\right\}^{1/2},
 $$
 for any weak solution of $\big(\partial_t +\mathcal{L}_\varep \big) u_\varep =0 $ in $2Q$.

To prove (\ref{5.3-1}), by translation and rescaling, we may assume that $(x_0, t_0)=(0,0)$ and $r=1$.
Let $\varep_0$ and $\theta$ be the constants given by Lemma \ref{lemma-5.1}.
We may  assume that $0<\varep<\theta\varep_0$, as the case $\varep\ge \theta\varep_0$ follows directly from the
standard Lipschitz estimates for parabolic systems in divergence form with H\"older continuous coefficients.

Suppose now that $\big(\partial_t +\mathcal{L}_\varep \big) u_\varep =0$ in $Q_1$ and $0<\varep< \theta\varep_0$.
We need to show that
\begin{equation}\label{5.3-4}
|\nabla u_\varep (0, 0)|\le C \| u_\varep \|_{L^\infty(Q_1)}.
\end{equation}
This will be done by using Lemma \ref{lemma-5.2} and a blow-up argument.

Choose $k\ge 1$ so that $\theta^{k+1} \varep_0\le \varep< \theta^k \varep_0$.
It follows from Lemma \ref{lemma-5.2} that
\begin{equation}\label{5.3-5}
\sup_{(x,t)\in Q_{\theta\varep/\varep_0}} |u_\varep (x,t) -u_\varep (0,0)|
\le \sup_{(x,t)\in Q_{\theta^{k+1}} } |u_\varep (x,t) -u_\varep (0,0)|
\le C \varep \|u_\varep\|_{L^\infty(Q_1)},
\end{equation}
where we have used the fact
$$
|E(\varep, k+1)| +|b(\varep, k+1)|
\le C \| u_\varep \|_{L^\infty(Q_1)}.
$$
Let 
$$
w(x,t)=\frac{ u_\varep (\varep x, \varep^2 t) -u_\varep (0,0)}{\varep}.
$$
Note that $\big(\partial_t +\mathcal{L}_1 \big) w =0 $ in $Q_{2/\varep_0}$.
By the standard local regularity theory for $\partial_t  +\mathcal{L}_1$, we obtain
\begin{equation}\label{5.3-7}
|\nabla w (0, 0)|\le C \| w\|_{L^\infty (Q_{\theta/\varep_0})}.
\end{equation}
It follows from (\ref{5.3-7}) and (\ref{5.3-5}) that
$$
|\nabla u_\varep (0, 0)|
=|\nabla w (0, 0)|
\le C \varep^{-1} \sup_{(x,t)\in Q_{\theta\varep/\varep_0}}
|u_\varep (x,t)-u_\varep (0,0)|
\le
C \| u_\varep\|_{L^\infty(Q_1)}.
$$
This completes the proof of (\ref{5.3-1}).
\end{proof}


\section{A real variable method and proof of Theorem \ref{main-theorem-1}}

\setcounter{equation}{0}

In this section we give the proof of Theorem \ref{main-theorem-1}.
We first treat the case where $u_\varep$ is a weak solution of 
$\big(\partial_t +\mathcal{L}_\varep \big) u_\varep =0$, i.e. $f=0$.
The general case will be handled by a real variable argument.

\begin{theorem}\label{theorem-10.1}
Suppose $A=A(x,t)$ satisfies the same conditions as in Theorem \ref{main-theorem-1}.
Let $u_\varep$ be a weak solution of $\big(\partial_t +\mathcal{L}_\varep \big) u_\varep
=0$ in $2Q$, where $Q=Q_r (x_0, t_0)$.
Then, for any $2<p<\infty$,
\begin{equation}\label{10-1}
\left(\average_Q |\nabla u_\varep|^p \right)^{1/p}
\le \frac{C_p}{r} 
\left(\average_{2Q} | u_\varep|^2 \right)^{1/2},
\end{equation}
where $C_p$ depends at most on $d$, $m$, $p$, $\mu$, and $A$.
\end{theorem}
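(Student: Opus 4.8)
The plan is to adapt the three‑step compactness argument used for the interior Lipschitz estimate (Theorem~\ref{theorem-5.1}), but to halt the iteration at the scale $\varepsilon$ and, in the concluding blow‑up step, to replace the classical interior Lipschitz bound by the interior $W^{1,p}$ estimate (\ref{krylov}) for $\partial_t+\mathcal{L}_1$ with $VMO_x$ coefficients. By translation and the rescaling property (\ref{rescaling}) we may assume $Q=Q_1(0,0)$, and we shall use freely that the enlargement constant $2$ in the statement may be replaced by any fixed constant $>1$, the passage between them being a routine covering. By Remark~\ref{remark-2.1} we may assume $\varepsilon$ is as small as we wish, since for $\varepsilon$ bounded below (\ref{10-1}) follows at once from (\ref{krylov}). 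Throughout we use, via Remark~\ref{remark-5.1}, that the approximation Lemmas~\ref{lemma-5.1} and~\ref{lemma-5.2} remain valid under the sole hypothesis $A\in VMO_x$, with constants $\theta,\varepsilon_0,C$ depending only on $d,m,\mu$ and the modulus $\omega$ in (\ref{VMO-x}).

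The heart of the matter is a large‑scale gradient estimate: there is $c\in(0,1)$, depending only on $d,m,\mu,\omega$, such that every weak solution of $\big(\partial_t+\mathcal{L}_\varep\big)u_\varep=0$ in $Q_1(z,\tau)$ satisfies $\big(\average_{Q_{c\varepsilon}(z,\tau)}|\nabla u_\varep|^p\big)^{1/p}\le C_p\,\|u_\varep\|_{L^\infty(Q_1(z,\tau))}$; by translation it suffices to take $(z,\tau)=(0,0)$. Choose $k\ge1$ with $\theta^{k+1}\varepsilon_0\le\varepsilon<\theta^k\varepsilon_0$, so that $\theta^{k+1}$ is comparable to $\varepsilon$. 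Lemma~\ref{lemma-5.2} furnishes $b\in\mathbb{R}$ and $E=(E_\ell^\beta)\in\mathbb{R}^{dm}$ with $|b|+|E|\le C\|u_\varep\|_{L^\infty(Q_1)}$ such that, on $Q_{\theta^{k+1}}$, the solution $u_\varep$ differs from $u_\varep(0,0)+\varepsilon b+\big\{P_\ell^\beta(x)+\varepsilon\chi_\ell^\beta(x/\varepsilon,t/\varepsilon^2)\big\}E_\ell^\beta$ by at most $\theta^{3(k+1)/2}\|u_\varep\|_{L^\infty(Q_1)}$ in sup‑norm. Since $\|\chi\|_{L^\infty}\le C$ — which follows from the $L^\infty$ bound of Theorem~\ref{theorem-4.1} applied to the functions $\chi_\ell^\beta+P_\ell^\beta$ together with the energy estimate for the cell problem (\ref{cell-problem}) — one deduces, exactly as in the proof of Theorem~\ref{theorem-5.1}, that $\varepsilon^{-1}\sup_{Q_{2c\varepsilon}(0,0)}|u_\varep-u_\varep(0,0)|\le C\|u_\varep\|_{L^\infty(Q_1)}$ for a suitable small $c$. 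Blowing up, $w(x,t):=\varepsilon^{-1}\big(u_\varep(\varepsilon x,\varepsilon^2 t)-u_\varep(0,0)\big)$ solves $\big(\partial_t+\mathcal{L}_1\big)w=0$ in $Q_{1/\varepsilon}(0,0)$, so (\ref{krylov}) — whose constant depends on $A$ only through $\omega$ — gives $\big(\average_{Q_{c}(0,0)}|\nabla w|^p\big)^{1/p}\le C_p\,\|w\|_{L^\infty(Q_{2c}(0,0))}\le C_p\,\varepsilon^{-1}\sup_{Q_{2c\varepsilon}(0,0)}|u_\varep-u_\varep(0,0)|\le C_p\,\|u_\varep\|_{L^\infty(Q_1)}$. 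As $\nabla w(x,t)=(\nabla u_\varep)(\varepsilon x,\varepsilon^2 t)$, the left‑hand side equals $\big(\average_{Q_{c\varepsilon}(0,0)}|\nabla u_\varep|^p\big)^{1/p}$, which is the asserted bound.

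Given the large‑scale estimate, Theorem~\ref{theorem-10.1} follows by a standard covering: cover $Q$ by finitely many cubes $Q_{c\varepsilon}(x_i,t_i)$ of bounded overlap, apply the large‑scale estimate on each (after translating), bound each $\|u_\varep\|_{L^\infty}$ by $C\big(\average_{2Q}|u_\varep|^2\big)^{1/2}$ using Theorem~\ref{theorem-4.1}, and sum, using $\sum_i|Q_{c\varepsilon}(x_i,t_i)|\le C|Q|$. The genuine difficulty here is conceptual rather than computational: since $A$ is merely $VMO_x$, $\nabla u_\varep$ need not be bounded even when $\varepsilon=1$, so the compactness iteration cannot be pushed below the scale $\varepsilon$; there one must switch from an $L^\infty$ gradient bound to the $L^p$ estimate (\ref{krylov}). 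The point that must be checked with care — and which is precisely what Remark~\ref{remark-5.1} secures — is that every constant produced by the iteration, both in Lemma~\ref{lemma-5.2} and in the corrector bounds, depends on $A$ only through the $VMO_x$ modulus $\omega$.
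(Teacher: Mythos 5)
Your proposal is correct and follows essentially the same argument as the paper: reduce to small $\varepsilon$, use Lemma \ref{lemma-5.2} (valid under $A\in VMO_x$ by Remark \ref{remark-5.1}) to obtain the sup bound $\sup_{Q_{c\varepsilon}}|u_\varepsilon-u_\varepsilon(0,0)|\lesssim\varepsilon\|u_\varepsilon\|_{L^\infty(Q_1)}$, blow up by $w(x,t)=\varepsilon^{-1}\big(u_\varepsilon(\varepsilon x,\varepsilon^2 t)-u_\varepsilon(0,0)\big)$ and apply the Byun--Krylov $W^{1,p}$ estimate (\ref{krylov}) for $\partial_t+\mathcal{L}_1$ at unit scale, then cover $Q$ by cylinders of radius $\sim\varepsilon$ and invoke the $L^\infty$ estimate of Theorem \ref{theorem-4.1}. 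The only cosmetic difference from the paper's proof is that you isolate the "large-scale gradient estimate" as an explicit intermediate statement and sketch why $\|\chi\|_\infty\le C$, both of which the paper handles implicitly.
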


\begin{proof}
By translation and dilation we may assume that $r=1$ and $(x_0, t_0)=(0,0)$.
We may also assume that $0<\varep<\theta \varep_0$, where $\varep_0$ and $\theta$ are constants given by
Lemma \ref{lemma-5.1} (see Remark \ref{remark-5.1}).
 This is because the case $\varep\ge \theta \varep_0$ follows from the $W^{1,p}$
estimates in \cite{Byun-2007, Krylov-2007} (see Remark \ref{remark-2.1}).

Let $w(x,t)=\varep^{-1}  u_\varep (\varep x, \varep^2 t)$ and $Q_\rho
 =Q_\rho (0,0)$. Since $\big( \partial_t +\mathcal{L}_1 \big) w=0$
in $Q_{2/\varep_0}$,
it follows from the local $W^{1,p}$ estimates for the operator $\partial_t +\mathcal{L}_1$ in \cite{Byun-2007, Krylov-2007} that
$$
\left(\average_{Q_{\theta\varep_0^{-1}}} |\nabla w|^p\right)^{1/p}
\le C \left(\average_{Q_{2\theta\varep_0^{-1}}} |w- w(0,0)|^2\right)^{1/2}
\le C \sup_{(x,t)\in Q_{2\theta\varep_0^{-1}} }|w(x,t)-w(0,0)|.
$$
It follows that
$$
\aligned
\int_{Q_{\varep \theta \varep_0^{-1}}} |\nabla u_\varep|^p
& \le C\, \varep^{-p} \,  |Q_{\varep\theta\varep_0^{{-1}}} |\sup_{(x,t)\in Q_{2\varep\theta\varep_0^{-1}} } {|u_\varep(x,t)  -u_\varep (0,0)|^p}\\
&\le C \varep^{d+2} \| u_\varep\|^p_{L^\infty(Q_1)},
\endaligned
$$
where we have used Lemma \ref{lemma-5.2} for the last inequality, as in (\ref{5.3-5}).
Clearly, by translation, this implies that 
$$
\int_{\widetilde{Q}} |\nabla u_\varep|^p \le C |\widetilde{Q}| \| u_\varep\|^p_{L^2(Q_2)},
$$
for any cylinder $\widetilde{Q}=Q_\rho (x_1, t_1)$
 of size $\rho=\varep\theta\varep_0^{-1}$ such that $\widetilde{Q}\subset (3/2)Q_2$.
By covering $Q_1$ with such cylinders, one may deduce that
$$
\int_{Q_1} |\nabla u_\varep|^p \le C \| u_\varep\|_{L^2(Q_2)}^p.
$$
This completes the proof.
\end{proof}

To handle the general case, where $u_\varep$ is a weak solution of
$\big(\partial_t +\mathcal{L}_\varep \big) u_\varep =\text{div} (f)$,
we use the following theorem.

\begin{theorem}\label{real-variable-theorem}
Let $\widetilde{Q}=Q_R (x_0,t_0)$ and $F\in L^2(4\widetilde{Q})$.
Let $q>2$ and $f\in L^p(4\widetilde{Q})$ for some $2<p<q$.
Suppose that for each $Q=Q_r (x,t) \subset 2\widetilde{Q}$ with $|Q|\le c_1 |\widetilde{Q}|$,
there exist two measurable functions $F_Q$ and $R_Q$ on $2Q$, such that
$|F|\le |F_Q| +|R_Q|$ on $2Q$,
\begin{equation}\label{real-1}
\aligned
\left(\average_{2Q} |R_Q|^q \right)^{1/q}
& \le C_1 \left\{ \left( \average_{c_2 Q} |F|^2 \right)^{1/2}
+ \sup_{4\widetilde{Q}\supset Q^\prime\supset Q} \left(\average_{Q^\prime} |f|^2\right)^{1/2} \right\},\\
\left(\average_{2Q} |F_Q|^2 \right)^{1/2}
& \le C_2 \sup_{4\widetilde{Q} \supset Q^\prime\supset Q} \left(\average_{Q^\prime} |f|^2\right)^{1/2},
\endaligned
\end{equation}
where $C_1$, $C_2>0$, $0<c_1<1$, and $c_2>2$. Then
$F\in L^p(\widetilde{Q})$ and
\begin{equation}\label{real-2}
\left(\average_{\widetilde{Q}} |F|^p\right)^{1/p}
\le C \left\{ \left(\average_{4\widetilde{Q}} |F|^2\right)^{1/2}
+\left(\average_{4\widetilde{Q}} |f|^p \right)^{1/p} \right\},
\end{equation}
where $C$ depends only on $d$, $C_1$, $C_2$, $c_1$, $c_2$, $p$, and $q$.
\end{theorem}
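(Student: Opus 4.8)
The statement is a self-improving (reverse-type) $L^p$ estimate of Caffarelli–Peral type, and the natural route is via a Calder\'on–Zygmund stopping-time decomposition applied to the level sets of the maximal function of $|F|^2$. First I would normalize by scaling so that $\widetilde{Q}=Q_1$ and, by homogeneity of \eqref{real-2} in $F$ and $f$, arrange that
$$
\left(\average_{4\widetilde{Q}} |F|^2\right)^{1/2} + \left(\average_{4\widetilde{Q}} |f|^p\right)^{1/p} \le 1,
$$
so it suffices to bound $|\{x\in\widetilde{Q}: \mathcal{M}(|F|^2 \mathbf{1}_{4\widetilde{Q}})(x) > \lambda\}|$ for $\lambda$ large, where $\mathcal{M}$ is the (parabolic) Hardy–Littlewood maximal operator adapted to the cylinders $Q_r(x,t)$. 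The goal is to show a good-$\lambda$ inequality of the form $|E(A\lambda)| \le C \lambda^{-\delta}\,|E(\lambda)| + |\{\mathcal{M}(|f|^2\mathbf{1}_{4\widetilde{Q}}) > c\lambda\}|$ for suitable absolute constants and some $\delta = (q/2)-1>0$ coming from the higher integrability exponent $q$; integrating this against $\lambda^{p/2-1}\,d\lambda$ and using $p<q$ then yields \eqref{real-2}.

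The main work is the good-$\lambda$ inequality, and here is where the hypotheses \eqref{real-1} enter. Fixing a Whitney-type cylinder $Q=Q_r(x,t)$ selected by the stopping-time argument at level $\lambda$ (so that $\average_{c_2 Q}|F|^2 \lesssim \lambda$ but the maximal function exceeds $\lambda$ somewhere in $Q$), I would use the decomposition $|F|\le |F_Q|+|R_Q|$ on $2Q$. On the "bad" part $F_Q$ one uses the $L^2$ bound in the second line of \eqref{real-1}, which is controlled purely by the maximal function of $|f|^2$; this contributes the $f$-term in the conclusion. On the "good" part $R_Q$ one uses Chebyshev together with the reverse-H\"older-type bound $\left(\average_{2Q}|R_Q|^q\right)^{1/q} \lesssim \lambda^{1/2}$ (again using that $\average_{c_2 Q}|F|^2\lesssim\lambda$ and the sup of $\average_{Q'}|f|^2$ is $\lesssim\lambda$ on the relevant cylinders): since $q>2$, Chebyshev gives
$$
\bigl|\{x\in Q: |R_Q(x)|^2 > \varepsilon\lambda\}\bigr| \le \frac{C}{(\varepsilon\lambda)^{q/2}}\int_{2Q}|R_Q|^{q/2}\cdot(\ldots) \lesssim \varepsilon^{-q/2}\,\lambda^{-(q/2-1)}\,|Q|,
$$
which is the gain we need. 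One then has to verify that $\mathcal{M}(|F|^2\mathbf{1}_{4\widetilde{Q}}) > A\lambda$ on $Q$ forces either $\mathcal{M}_{2Q}(|F_Q|^2) > c\lambda$ or $\mathcal{M}_{2Q}(|R_Q|^2) > c\lambda$ at that point (splitting $A$ appropriately and localizing the maximal function to $2Q$, which is legitimate because $Q$ was chosen away from the level set at height $\lambda$ on $c_2 Q \supset 2Q$), and sum the estimates over the disjoint Vitali-type family of selected cylinders $\{Q_i\}$ whose dilates cover $E(A\lambda)$.

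The constraint $|Q|\le c_1|\widetilde{Q}|$ is harmless since one only needs \eqref{real-1} for small cylinders in the stopping-time selection, and the constant $c_1$ simply enters the threshold $\lambda_0$ above which the good-$\lambda$ inequality is run. The step I expect to be the genuine obstacle — more bookkeeping than deep — is the geometric localization: making sure that all the cylinders $c_2 Q$, $2Q$, and the $Q'$ with $Q\subset Q'\subset 4\widetilde{Q}$ that appear in \eqref{real-1} stay inside $4\widetilde{Q}$ for the selected $Q$'s, and that the parabolic maximal function (which is not quite the standard one because the cylinders $Q_r(x,t)=B(x,r)\times(t-r^2,t)$ are backward-in-time and non-centered) still satisfies the weak-$(1,1)$ and Vitali covering properties one needs; this is standard for parabolic metrics but must be invoked correctly. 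Once the good-$\lambda$ inequality is in hand, the passage to \eqref{real-2} is the routine layer-cake integration, using $\int_0^\infty \lambda^{p/2-1}|\{\mathcal{M}(|f|^2)>c\lambda\}|\,d\lambda \lesssim \|f\|_{L^p(4\widetilde{Q})}^p$ via the maximal theorem with $p/2>1$.
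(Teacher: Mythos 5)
The paper does not prove this theorem: it explicitly omits the proof, stating that the result is the parabolic analogue of Theorem~3.2 in \cite{Shen-2007-boundary}, itself proved by the Calder\'on--Zygmund/maximal-function argument of Caffarelli--Peral \cite{Caffarelli-Peral} as developed in \cite{Shen-2005-bounds}, and that this argument extends directly to parabolic cylinders. Your plan is precisely that argument, so you are taking the same route the paper cites.

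One bookkeeping slip is worth flagging because it would not survive a careful write-up. You state the good-$\lambda$ inequality in the form $|E(A\lambda)|\le C\lambda^{-\delta}|E(\lambda)|+\dots$ with $\delta=(q/2)-1$, and in the Chebyshev step you write the right-hand side as $\varepsilon^{-q/2}\lambda^{-(q/2-1)}|Q|$. The $\lambda$-power should not appear. If $\bigl(\average_{2Q}|R_Q|^q\bigr)^{1/q}\lesssim\lambda^{1/2}$ on a selected cylinder, then Chebyshev gives
\begin{equation*}
\bigl|\{ |R_Q|^2>\varepsilon\lambda\}\cap Q\bigr|
\le (\varepsilon\lambda)^{-q/2}\int_{2Q}|R_Q|^q
\lesssim \varepsilon^{-q/2}\,|Q|,
\end{equation*}
with the $\lambda^{q/2}$ factors cancelling. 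Thus the correct good-$\lambda$ statement is $|E(A\lambda)|\le \eta(A)\,|E(\lambda)|+|\{\mathcal{M}(|f|^2\mathbf{1}_{4\widetilde Q})>c\lambda\}|$ with a \emph{constant} $\eta(A)\sim A^{-q/2}$ (or $\varepsilon^{-q/2}$ with $\varepsilon$ tied to $A$) that you make small by choosing $A$ large. After multiplying by $\lambda^{p/2-1}$ and substituting $\lambda\mapsto A\lambda$ on the left, absorption requires $\eta(A)A^{p/2}<1$, i.e.\ $A^{(p-q)/2}\ll 1$, and this is exactly where $p<q$ enters. With a $\lambda^{-\delta}$ factor in place of the constant, the layer-cake integration does not close (the two sides have mismatched powers of $\lambda$ and no choice of $A$ absorbs the error term). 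The rest of your plan --- normalization, stopping time selection, localization of the maximal function to $2Q$, geometric containment of $c_2Q$ and the intermediate $Q'$ inside $4\widetilde Q$, and the parabolic Vitali covering --- is correct and is exactly what the cited references do.
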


Theorem \ref{real-variable-theorem}, whose proof 
we omit, is the parabolic version of Theorem 3.2 in \cite{Shen-2007-boundary}, which was proved by using
a real-variable argument originated in \cite{Caffarelli-Peral} and further developed in \cite{Shen-2005-bounds}.
The argument in \cite{Shen-2007-boundary}, which is based on a Calder\'on-Zygmund decomposition and uses
the $L^p$ boundedness of the Hardy-Littlewood maximal functions, 
extends easily to the parabolic setting.

\begin{proof}[\bf Proof of Theorem \ref{main-theorem-2}]
Suppose that $\big( \partial_t +\mathcal{L}_\varep\big) u_\varep =\text{div} (f)$ in $4\widetilde{Q}$.
We will show that
\begin{equation}\label{10.2-1}
\left\{\average_{\widetilde{Q}} |\nabla u_\varep|^p \right\}^{1/p}
\le C \left\{\left(\average_{4\widetilde{Q}} |\nabla u_\varep|^2\right)^{1/2}
+\left(\average_{4\widetilde{Q}} |f|^p \right)^{1/p} \right\},
\end{equation}
By the energy estimates and a simple covering argument, it is easy to see that the estimate (\ref{10.2-1})
is equivalent to (\ref{1.1-1}).
We also point out that as in the case of local estimates (see (\ref{krylov-1})),
 the estimate (\ref{1.1-2}) follows from (\ref{1.1-1})
by Lemmas \ref{Poincare-lemma} and \ref{Cam-Lemma}.

To prove (\ref{10.2-1}), we let $q=p+1$ and apply Theorem \ref{real-variable-theorem} to
$F=|\nabla u_\varep|$.
For each $Q=Q_r (x,t)\subset 2\widetilde{Q}$ with $|Q|\le (100)^{-d-2} |\widetilde{Q}|$, we set
$$
F_Q =\nabla v_\varep \quad \text{ and } \quad R_Q =\nabla (u_\varep -v_\varep),
$$
where $v_\varep$ solves the initial-Dirichlet problem $\big(\partial_t +\mathcal{L}_\varep\big) v_\varep =\text{div} (f)$
in $4Q$ with zero initial and boundary data. Clearly, $|F|\le |F_Q| +|R_Q|$ on $4Q$.
By the well-known energy estimates,
\begin{equation}\label{10.2-3}
\average_{4Q} |\nabla v_\varep|^2 \le C \average_{4Q} |f|^2,
\end{equation}
where $C$ depends only on $d$, $m$, and $\mu$.
This gives the second inequality in (\ref{real-1}).

To verify the first inequality in (\ref{real-1}), we note that
$$
\big(\partial_t +\mathcal{L}_\varep \big) (u_\varep -v_\varep) =0 \quad \text{ in } 4Q.
$$
It follows from Theorem \ref{theorem-10.1}  that
$$
\aligned
\left(\average_{2Q} |R_Q|^q \right)^{1/q}
&\le C \left(\average_{4Q} |R_Q|^2 \right)^{1/2}\\
&\le C \left(\average_{4Q} |F|^2\right)^{1/2}
+C \left(\average_{4Q} |\nabla v_\varep|^2 \right)^{1/2}\\
&\le 
C \left(\average_{4Q} |F|^2\right)^{1/2}
+C \left(\average_{4Q} |f|^2 \right)^{1/2},
\endaligned
$$
where we have used (\ref{10.2-3}) for the last inequality.
This gives the second inequality in (\ref{real-2}). As a result,  the desired estimate (\ref{10.2-1})
follows by Theorem \ref{real-variable-theorem}.
\end{proof}


\section{Fundamental solutions and proof of Theorem \ref{main-theorem-2}}
\setcounter{equation}{0}

Suppose that $A$ satisfies  conditions (\ref{e1.3})-(\ref{e1.4}) and $A\in VMO_x$.
Let $u_\varep $ be a weak solution of $\big(\partial_t +\mathcal{L}_\varep \big) u_\varep =0$
in $Q_r=Q_r (x_0, t_0)$. It follows from Theorem \ref{theorem-4.1} that for any $0<\rho<r$,
\begin{equation}\label{6-0}
\average_{Q_\rho} |u_\varep -\average_{Q_\rho} u_\varep|^2
\le C \left(\frac{\rho}{r}\right)^{2\alpha} 
\average_{Q_r} |u_\varep -\average_{Q_r} u_\varep|^2
\end{equation}
where $0<\alpha<1$ and $C$ depends only on $d$, $m$, $\alpha$, $\mu$, and $A^\#$.
In view of Lemmas \ref{Cacci-Lemma} and \ref{Poincare-lemma} this implies that
\begin{equation}\label{6-1}
\average_{Q_\rho} |\nabla u_\varep|^2
\le C \left(\frac{\rho}{r} \right)^{2\alpha-2} 
\average_{Q_r} |\nabla u_\varep|^2, \quad \text{ for any } 0<\rho<r.
\end{equation}
Let $v_\varep$ be a weak solution of $\big( -\partial_t +\mathcal{L}_\varep^*\big) v_\varep=0$
in $Q_r^+$, where 
$$
Q_r^+=Q_r^+ (x_0, t_0)=B(x_0, r)\times (t_0, t_0+r^2)
$$
 and
$\mathcal{L}_\varep^* =-\text{div} \big( A^* (x/\varep, t/\varep^2) \nabla \big)$.
Then
$u_\varep (x,t)= v_\varep (x, 2t_0-t)$ is a solution of 
$\partial_t u_\varep -\text{div} \big( C(x/\varep, t/\varep^2)\nabla u_\varep\big) =0$ in $Q_r (x_0, t_0)$,
where $C(y,s)=A^* (y, s+ 2t_0/\varep^2)$.
Since $C(y,s)$ satisfies the same ellipticity, periodicity, and smoothness conditions as $A(y,s)$,
the estimate (\ref{6-1}) holds for this $u_\varep$. As a result, by a change of variables, we obtain
\begin{equation}\label{6-2}
\average_{Q_\rho^+} |\nabla v_\varep|^2
\le C \left(\frac{\rho}{r} \right)^{2\alpha-2} 
\average_{Q_r^+} |\nabla v_\varep|^2 \quad \text{ for any } 0<\rho<r.
\end{equation}
It is known that under the H\"older conditions (\ref{6-1})-(\ref{6-2}), the matrix of fundamental solutions
$\Gamma_\varep (x,t; y;s) =\big(\Gamma_{\varep, ij}^{\alpha\beta} (x,t; y,s) \big)$
for  the parabolic operator $\partial_t +\mathcal{L}_\varep$ in $\mathbb{R}^{d+1}$
 exists and satisfies the size estimate
 \begin{equation}\label{size-estimate}
 |\Gamma _\varep (x,t;y,s)|
 \le \frac{C}{(t-s)^{d/2}}  e^{-\frac{\kappa |x-y|^2}{t-s}},
 \end{equation}
 for any $t>s$ and $x,y\in \mathbb{R}^d$, where $\kappa$ and $C$ are positive constants depending
 only on $d$, $m$, $\mu$, and $A^\#$ (see \cite{ Cho-Dong-Kim-2008}).
 In particular, we have
 \begin{equation}\label{size-estimate-1}
 |\Gamma_\varep (x,t; y,s)|\le \frac{C}{\big( |x-y|+|t-s|^{1/2}\big)^d}.
 \end{equation}
 
Suppose now that $A\in \Lambda (\mu, \lambda, \tau)$.
It follows from Theorem \ref{theorem-5.1} and (\ref{size-estimate-1}) that
\begin{equation}\label{gradient-estimate}
|\nabla_x \Gamma_\varep(x,t; y,s)|
+|\nabla_y \Gamma_\varep (x,t; y,s)|
\le \frac{C}{ \big( |x-y| +|t-s|^{1/2} \big)^{d+1}},
\end{equation}
for any $(x,t), (y,s)\in \mathbb{R}^{d+1}$ and $t>s$. 
Note that as a function of $(x,t)$,
$\nabla_y \Gamma_\varep (x,t; y,s)$ is a solution 
of $\big( \partial_t +\mathcal{L}_\varep\big) u_\varep =0$ in $\mathbb{R}^{d+1} \setminus \{ (y,s)\}$.
Thus we may use the Lipschitz estimates in Theorem \ref{theorem-5.1} and (\ref{gradient-estimate})
to obtain
\begin{equation}\label{gradient-estimate-1}
|\nabla_x\nabla_y \Gamma_\varep (x,t;y,s)|
\le 
 \frac{C}{ \big( |x-y| +|t-s|^{1/2} \big)^{d+2}},
\end{equation}
for any $(x,t), (y,s)\in \mathbb{R}^{d+1}$ and $t>s$. 
This allows us to complete the proof of Theorem \ref{main-theorem-2}.

\begin{proof}[\bf Proof of Theorem \ref{main-theorem-2}]

Let $A\in \Lambda (\mu, \lambda, \tau)$.
Suppose that $\big(\partial_t +\mathcal{L}_\varep\big) u_\varep =F$ in $2Q$, where
$Q=Q_r (x_0, t_0)$ and $F\in L^p(2Q)$ for some $p>d+2$.
By dilation and translation we may assume that $r=1$ and $(x_0, t_0)=(0,0)$.
We need to show that
\begin{equation}\label{6.1-1}
\| u_\varep\|_{C^{1, 1/2} (Q)}
\le C_p \big\{ \| u_\varep\|_{L^2(2Q)} +\| F\|_{L^p(2Q)}\big\}.
\end{equation}
 
 To this end we choose  a scalar function $\psi \in C^\infty(\mathbb{R}^{d+1})$
 such that $\psi(x,t)=1$ on $Q$, and $\psi(x,t)=0$ if $|x|>3/2$ or $t<-(3/2)^2$.
 Using the representation of $u_\varep \psi$ by the matrix of fundamental solutions $\Gamma_\varep(x,t; y,s)$,
 one may deduce that if $(x,t)\in Q$, then
 $$
 \aligned
 |\nabla u_\varep(x,t)|
  \le  &  \int_{2Q} |\nabla_x \Gamma_\varep (x,t;y,s)| | F(y,s)\psi(y,s)| \, dyds
 +\int_{2Q} |\nabla_x \Gamma_\varep (x,t;y,s)| |u_\varep \partial_s \psi|\, dyds\\
& \quad +C \int_{2Q} |\nabla_x\Gamma_\varep (x,t; y,s)| |\nabla u_\varep | |\nabla \psi|\, dyds \\
& \quad+C \int_{2Q} |\nabla_x \nabla_y \Gamma(x,t; y,s)| |u_\varep| |\nabla \psi|\, dyds.
 \endaligned
 $$
In view of (\ref{size-estimate-1}), (\ref{gradient-estimate}) and (\ref{gradient-estimate-1}), 
this gives
$$
|\nabla u_\varep (x,t)|
\le C \int_{2Q} \frac{|F(y,s)|\, dyds }{\big( |x-y|+|t-s|^{1/2}\big)^{d+1}}
+C \int_{2Q} |u_\varep|
+C \int_{(3/2)Q} |\nabla u_\varep|.
$$
By H\"older's inequality and the energy estimate (\ref{Caccio}), it follows that
\begin{equation}\label{6.1-3}
\|\nabla u_\varep\|_{L^\infty(Q)}
\le C \big\{ \| F\|_{L^p(2Q)} + \| u_\varep\|_{L^2(2Q)}\big\}.
\end{equation}
Finally, one may use Lemmas \ref{Poincare-lemma} and \ref{Cam-Lemma} to deduce 
the desired estimate (\ref{6.1-1}) from (\ref{6.1-3}).
\end{proof}


\section{Boundary H\"older estimates}
\setcounter{equation}{0}

Let $\Omega$ be a bounded $C^1$ domain in $\mathbb{R}^d$.
For $x_0\in \overline{\Omega}$, $t_0\in \mathbb{R}$, and $0<r<r_0=\text{diam}(\Omega)$, let
\begin{equation}\label{7.1}
\aligned
& \Omega_r(x_0, t_0)= \big[B(x_0,r)\cap \Omega\big]\times (t_0-r^2, t_0),\\
& \Delta_r (x_0, t_0) =\big[B(x_0, r)\cap \partial\Omega\big] \times (t_0-r^2, t_0).
\endaligned
\end{equation}
Throughout this section we shall assume that
$A=A(x,t)$ satisfies (\ref{e1.3})-(\ref{e1.4}) and $A\in VMO_x$.

The goal of this section is to establish the following theorem.

\begin{theorem}\label{theorem-7.1}
Let $0<\alpha<1$. Suppose that $u_\varep$ is a weak solution of
\begin{equation}\label{7.1-1}
\big(\partial_t +\mathcal{L}_\varep\big) u_\varep =0
\quad \text{ in } \Omega_{2r} (x_0, t_0) \quad \text{ and } \quad
u_\varep =0 \quad \text{ on } \Delta_{2r} (x_0, t_0),
\end{equation}
for some $x_0\in \partial\Omega$, $t_0\in \mathbb{R}$, and $0<r<\text{\rm diam} (\Omega)$.
Then
\begin{equation}\label{7.1-2}
\| u_\varep\|_{C^{\alpha, \alpha/2} (\Omega_r (x_0, t_0))}
\le C r^{-\alpha}
 \left(\average_{\Omega_{2r} (x_0, t_0)} |u_\varep|^2\right)^{1/2}
\end{equation}
where $C$ depends only on $d$, $m$, $\alpha$, $\mu$, $A$, and $\Omega$.
\end{theorem}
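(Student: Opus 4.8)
The plan is to transplant to the boundary the three-step compactness scheme used for the interior estimate in Section 4 (Lemmas \ref{lemma-4.1} and \ref{lemma-4.2} and the proof of Theorem \ref{theorem-4.1}). After a translation we may assume $(x_0,t_0)=(0,0)$, and after the rescaling \eqref{rescaling} we may assume $r=1$; thus it suffices to show
$$
\|u_\varepsilon\|_{C^{\alpha,\alpha/2}(\Omega_1)}\le C\Big(\average_{\Omega_2}|u_\varepsilon|^2\Big)^{1/2}
$$
whenever $\big(\partial_t+\mathcal{L}_\varepsilon\big)u_\varepsilon=0$ in $\Omega_2=\Omega_2(0,0)$ and $u_\varepsilon=0$ on $\Delta_2$. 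Near the origin write $\partial\Omega$ as a graph $x_d=\phi(x')$ with $\phi\in C^1$ and $\phi(0')=0$; since $\nabla\phi$ is uniformly continuous it has a modulus of continuity $\eta$, which we fix once and for all. By the parabolic Campanato characterization of $C^{\alpha,\alpha/2}(\Omega_1)$ in the spirit of Lemma \ref{Cam-Lemma} --- applied to interior cylinders $Q_\rho(x,t)\subset\Omega_2$, where the needed oscillation bound comes from Theorem \ref{theorem-4.1} together with Lemma \ref{Poincare-lemma}, and to boundary cylinders $\Omega_\rho(z,\tau)$, $z\in B(0,1)\cap\partial\Omega$, with the standard interpolation between the two at points close to $\partial\Omega$ --- matters reduce to the boundary Campanato bound
\begin{equation}\label{plan-7-1}
\average_{\Omega_\rho(z,\tau)}|u_\varepsilon|^2\le C\,\rho^{2\alpha}\,\average_{\Omega_2}|u_\varepsilon|^2,\qquad z\in B(0,1)\cap\partial\Omega,\ \ 0<\rho<\tfrac12,
\end{equation}
in which no average of $u_\varepsilon$ need be subtracted because $u_\varepsilon$ vanishes on $\Delta_2$. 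As $\Omega$ is a fixed $C^1$ domain, it is enough to prove \eqref{plan-7-1} at $z=0$, with constants depending only on $d,m,\mu,\alpha$ and $\eta$.

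The first step is the one-step improvement, the boundary analogue of Lemma \ref{lemma-4.1}: there are $\theta\in(0,1/4)$ and $\varepsilon_0>0$, depending only on $d,m,\mu,\alpha$ and $\eta$, such that if $0<\varepsilon<\varepsilon_0$, $\big(\partial_t+\mathcal{L}_\varepsilon\big)u_\varepsilon=0$ in $\Omega_1$, $u_\varepsilon=0$ on $\Delta_1$, and $\average_{\Omega_1}|u_\varepsilon|^2\le 1$, then $\average_{\Omega_\theta}|u_\varepsilon|^2\le\theta^{2\alpha}$. I would prove this by contradiction exactly as before: extract $\varepsilon_k\to0$, matrices $A_k$ satisfying \eqref{e1.3}--\eqref{e1.4}, and solutions $u_k$ of the boundary problem with $\average_{\Omega_1}|u_k|^2\le1$ but $\average_{\Omega_\theta}|u_k|^2>\theta^{2\alpha}$; the zero-boundary-data version of the Caccioppoli inequality (Lemma \ref{Cacci-Lemma}) bounds $\{\nabla u_k\}$ in $L^2(\Omega_{1/2})$, Lemma \ref{embedding-lemma} upgrades the weak $L^2$ limit to a strong one, and Theorem \ref{homo-theorem-2} identifies the limit $u$ as a solution of $\partial_t u-\mathrm{div}(A^0\nabla u)=0$ in $\Omega_{1/2}$ with $A^0$ constant and satisfying \eqref{ellipticity-1}. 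The new point is that the Dirichlet condition survives: since $\{u_k\}$ is bounded in $L^2(-1,0;W^{1,2})$ and $u_k=0$ on $\Delta_1$, the traces pass to the limit and $u=0$ on $\Delta_{1/2}$. Applying to $u$ the boundary regularity of constant-coefficient parabolic systems in the $C^1$ domain $\Omega$ --- obtained by specializing the $VMO_x$ boundary $W^{1,p}$ theory of \cite{Byun-2007,Krylov-2007} to constant coefficients and taking $p$ large, which gives $u\in C^{\beta,\beta/2}$ near $\Delta_{1/2}$ for every $\beta<1$ and hence $\average_{\Omega_\theta}|u|^2\le C_0\,\theta^{2\beta}\average_{\Omega_{1/2}}|u|^2$ --- fixing $\beta\in(\alpha,1)$ and then $\theta$ so small that the relevant constant times $\theta^{2\beta}$ is less than $\theta^{2\alpha}$ gives the contradiction.

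The iteration, the analogue of Lemma \ref{lemma-4.2}, then proceeds by induction on $k$: if $0<\varepsilon<\theta^{k-1}\varepsilon_0$, rescaling $u_\varepsilon$ about the boundary point $0$ by $\theta^k$ produces a solution $w$ of $\big(\partial_t+\mathcal{L}_{\varepsilon/\theta^k}\big)w=0$, with zero data, on the rescaled domain $\theta^{-k}\Omega$, whose boundary near $0$ is the graph of $\phi_k(x')=\theta^{-k}\phi(\theta^kx')$. The decisive observation is that $\nabla\phi_k(0')=0$ and $|\nabla\phi_k(x')-\nabla\phi_k(y')|\le\eta(\theta^k|x'-y'|)\le\eta(|x'-y'|)$, so every $\theta^{-k}\Omega$ lies in the same class of $C^1$ graph domains controlled by $\eta$; hence the one-step lemma applies to $w$ with the \emph{same} $\theta$ and $\varepsilon_0$, and the induction yields $\average_{\Omega_{\theta^k}}|u_\varepsilon|^2\le\theta^{2k\alpha}\average_{\Omega_1}|u_\varepsilon|^2$ whenever $\varepsilon<\theta^{k-1}\varepsilon_0$. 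Filling the dyadic gaps gives \eqref{plan-7-1} for $\varepsilon/\varepsilon_0\le\rho<1/2$ (the case $\varepsilon\ge\theta\varepsilon_0$ follows directly from the $VMO_x$ boundary theory, since then $\big(A_\varepsilon\big)^\#(\cdot)\le A^\#(\cdot/(\theta\varepsilon_0))$, cf.\ Remark \ref{remark-2.1}). Finally, for $0<\rho<\varepsilon/\varepsilon_0$ a blow-up argument as in the proof of Theorem \ref{theorem-4.1} applies: $w(x,t)=u_\varepsilon(\varepsilon x,\varepsilon^2t)$ solves $\big(\partial_t+\mathcal{L}_1\big)w=0$ with zero data on the $C^1$ domain $\varepsilon^{-1}\Omega$ (again with boundary-gradient modulus $\le\eta$, uniformly in $\varepsilon$); one applies the boundary $C^{\alpha,\alpha/2}$ estimate for $\partial_t+\mathcal{L}_1$ in $C^1$ domains from \cite{Byun-2007,Krylov-2007} (via Lemmas \ref{Poincare-lemma} and \ref{Cam-Lemma}) at scale $\rho/\varepsilon$, then rescales back and inserts \eqref{plan-7-1} at $\rho=2\varepsilon/\varepsilon_0$.

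I expect the principal difficulty to be bookkeeping rather than analytic: one must phrase the one-step compactness lemma so that $\theta$, $\varepsilon_0$ and all constants depend on $\Omega$ only through $d,m,\mu$ and the modulus of continuity $\eta$ of $\nabla\phi$, so that it may be re-applied verbatim to every rescaled domain $\theta^{-k}\Omega$ in the iteration and to the blow-ups $\varepsilon^{-1}\Omega$, and one must check that the homogeneous Dirichlet condition is stable under the weak/strong $L^2$ limit used in that lemma. The remaining ingredients --- the boundary Caccioppoli and Poincar\'e inequalities for solutions vanishing on $\Delta_{2r}$, the boundary/global Campanato characterization, and the $C^1$-domain regularity for constant-coefficient systems and for $\partial_t+\mathcal{L}_1$ --- are standard parabolic analogues of facts already recorded in Sections 2 and 4.
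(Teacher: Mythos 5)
Your three-step compactness scheme, the reduction to a boundary Campanato bound, the induction preserving the $C^1$ modulus $\eta$, and the small-scale blow-up all match the paper's argument for Theorem~\ref{theorem-7.1} (via Theorem~\ref{theorem-7.2} and Lemmas~\ref{lemma-7.4}--\ref{lemma-7.6}). But there is one genuine gap at the very first step: you write ``near the origin write $\partial\Omega$ as a graph $x_d=\phi(x')$'' and then proceed as if the periodicity condition~(\ref{e1.4}) were still in force. For a general $C^1$ domain this flattening requires an orthogonal change of variables $x\mapsto \mathcal{O}x$, and after such a rotation the coefficient matrix becomes $\mathcal{O}^{T}A(\mathcal{O}^{-1}x/\varepsilon,\,t/\varepsilon^2)\mathcal{O}$, which is periodic with respect to $\mathcal{O}\mathbb{Z}^{d}\times\mathbb{Z}$, not $\mathbb{Z}^{d+1}$. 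This breaks the hypotheses of Theorem~\ref{homo-theorem-2} (hence of your one-step compactness lemma, which relies on the homogenized limit being a constant matrix obtained from the periodic correctors) and of the rescaling identity~(\ref{rescaling}) in the form needed for the iteration. The paper handles this by Theorem~\ref{rotation-theorem}: one replaces $\mathcal{O}$ by a nearby orthogonal matrix $\mathcal{R}$ with rational entries, and then rescales by a suitable integer $N$ so that $N\mathcal{R}^{-1}\in M_d(\mathbb{Z})$; the transformed coefficients $h_{ij}^{\alpha\beta}(y,t)=\mathcal{R}_{ik}\mathcal{R}_{j\ell}\,a_{k\ell}^{\alpha\beta}(N\mathcal{R}^{-1}y,\,N^2t)$ are then again $\mathbb{Z}^{d+1}$-periodic and elliptic, and the domain remains a $C^1$ graph near the boundary point with comparable modulus. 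Without this device your compactness lemma is applied to operators outside the class it covers, so you should insert the rational-rotation reduction before invoking the graph representation; the rest of your argument then goes through as written.

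A smaller remark: your compactness lemma requires not just that the Dirichlet condition survive the limit (which you address), but also that the limit identification of Theorem~\ref{homo-theorem-2} be applicable when the domains $D_r(\psi_k)$ themselves vary with $k$ (since the $\psi_k$ in the iteration are rescaled versions of $\psi$, and in the contradiction argument they are a priori an arbitrary sequence satisfying~(\ref{condition-psi})). The paper isolates this as a separate Lemma~\ref{lemma-7.4}, pulling back each $u_k$ by $(x',x_d,t)\mapsto(x',x_d-\psi_k(x'),t)$ to a fixed slab $E_r\times(-r^2,0)$, passing to a strong limit there, and then pushing forward; you should spell this out, since Theorem~\ref{homo-theorem-2} as stated is for a fixed $\Omega$.
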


Let $\psi:\mathbb{R}^{d-1}\to \mathbb{R}$ be a $C^1$ function with compact support.
It will be convenient to assume that
\begin{equation}\label{condition-psi}
\aligned
\psi (0) =0\text{ and } & \|\nabla \psi\|_{L^\infty(\mathbb{R}^{d-1})}\le M_0,\\
\sup\big\{ |\nabla\psi (x^\prime)-\nabla \psi(y^\prime)|: & \ x^\prime, y^\prime\in \mathbb{R}^{d-1} \text{ and }
|x^\prime-y^\prime|<r \big\} \le \eta (r),
\endaligned
\end{equation}
where $\eta(r)$ is a fixed bounded increasing function on $(0, \infty)$ such that
$\lim_{r\to 0^+} \eta (r)=0$.
For $r>0$, define
\begin{equation}\label{7.3}
\aligned
& D_r=D_r (\psi)
=\big\{ (x^\prime, x_d, t)\in \mathbb{R}^{d+1}:\
|x^\prime|<r, \ \psi(x^\prime) < x_d < \psi(x^\prime) + 10 (M_0+1) r \\
& \quad \qquad\qquad\qquad\qquad\qquad\qquad\qquad  \text{ and } -r^2< t<0 \big\},\\
& I_r=I_r (\psi)
=\big\{ (x^\prime, \psi(x^\prime), t)\in \mathbb{R}^{d+1}: \ |x^\prime|<r \text{ and } -r^2< t<0\big\}.
\endaligned
\end{equation}

By a change of the coordinate system and using Campanato's characterization of H\"older spaces, one may
deduce Theorem \ref{theorem-7.1} from the following theorem.

\begin{theorem}\label{theorem-7.2}
Let $0<\alpha<1$.
Suppose that
\begin{equation}\label{7.2-1}
\big(\partial_t +\mathcal{L}_\varep\big) u_\varep =0
\quad \text{ in } D_{r}  \quad \text{ and } \quad
u_\varep =0 \quad \text{ on } I_{r}
\end{equation}
for some $r>0$. 
Then, for any $0<\rho<r$,
\begin{equation}\label{7.2-2}
\left(\average_{D_\rho} |u_\varep|^2\right)^{1/2}
\le C \left( \frac{\rho}{r}\right)^\alpha
\left(\average_{D_r}  |u_\varep|^2\right)^{1/2},
\end{equation}
where $C$ depends only on $d$, $m$, $\alpha$, $\mu$, $\omega (r)$ in (\ref{VMO-x}), and $(M_0, \eta( r))
$ in (\ref{condition-psi}).
\end{theorem}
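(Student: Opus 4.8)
The plan is to prove Theorem~\ref{theorem-7.2} by the same three-step compactness scheme used in Section~4 for the interior estimates (following \cite{AL-1987}), adapted to the boundary chart. Since (\ref{7.2-2}) is invariant under the parabolic dilation $(x,t)\mapsto(rx,r^2t)$ (which turns $D_\rho(\psi)$ into $D_{\rho/r}(r^{-1}\psi(r\,\cdot))$ and $\mathcal{L}_\varep$ into $\mathcal{L}_{\varep/r}$), it suffices to treat $r=1$ and to show $\average_{D_\rho}|u_\varep|^2\le C\rho^{2\alpha}\average_{D_1}|u_\varep|^2$ for $0<\rho<1$. The key structural remark is that the class $\mathcal{P}$ of functions $\psi$ satisfying (\ref{condition-psi}) with the given data $(M_0,\eta)$ is stable under the rescalings $\psi\mapsto\psi_\delta$, $\psi_\delta(x')=\delta^{-1}\psi(\delta x')$ for $0<\delta\le1$: one has $\psi_\delta(0)=0$, $\|\nabla\psi_\delta\|_\infty\le M_0$, and $\nabla\psi_\delta$ has modulus $r\mapsto\eta(\delta r)\le\eta(r)$; moreover $D_{\delta}(\psi)$ becomes, after the parabolic dilation, exactly $D_1(\psi_\delta)$. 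Thus all the estimates below are to be established uniformly over $\psi\in\mathcal{P}$, and the rescaling property (\ref{rescaling}) keeps us inside this class.

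\textbf{Step 1 (approximation lemma).} I would first prove that there are $\varep_0>0$ and $\theta\in(0,1/4)$, depending only on $d,m,\mu$, $\omega(r)$ in (\ref{VMO-x}) and $(M_0,\eta)$, such that if $0<\varep<\varep_0$, $\psi\in\mathcal{P}$, $u_\varep$ solves (\ref{7.2-1}) in $D_1(\psi)$ with $u_\varep=0$ on $I_1(\psi)$, and $\average_{D_1(\psi)}|u_\varep|^2\le1$, then $\average_{D_\theta(\psi)}|u_\varep|^2\le\theta^{2\alpha}$. This is argued by contradiction: given $\varep_k\to0$, matrices $A_k$ satisfying (\ref{e1.3})--(\ref{e1.4}), $\psi_k\in\mathcal{P}$ and solutions $u_k$ for which the conclusion fails, the boundary energy inequality (a version of Lemma~\ref{Cacci-Lemma} obtained by testing with $u_\varep\varphi^2$, legitimate precisely because $u_\varep$ vanishes on $I_1$) bounds $\{\nabla u_k\}$ in $L^2(D_{1/2}(\psi_k))$, hence $\{\partial_tu_k\}$ in $L^2(W^{-1,2})$; after a zero-extension across $I_1(\psi_k)$ and an application of Lemma~\ref{embedding-lemma}, we may pass to a subsequence with $u_k\to u$ strongly in $L^2$, $\nabla u_k\rightharpoonup\nabla u$ weakly in $L^2$, $\psi_k\to\psi_0\in\mathcal{P}$ in $C^1_{\rm loc}$ (Arzel\`a--Ascoli), and $\widehat{A_k}\to A^0$. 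The extended $u_k$ vanish below the graphs $\{x_d<\psi_k\}$, hence so does $u$ below $\{x_d<\psi_0\}$, which gives $u=0$ on $I_{1/2}(\psi_0)$; and by Theorem~\ref{homo-theorem-2}, $u$ solves $\partial_tu-\mathrm{div}(A^0\nabla u)=0$ in $D_{1/2}(\psi_0)$ with $A^0$ obeying (\ref{ellipticity-1}). Fixing $\gamma\in(\alpha,1)$ and using the classical boundary H\"older estimate of order $\gamma$ for constant-coefficient parabolic systems with zero Dirichlet data in a $C^1$ graph domain (with constant depending only on $d,m,\mu$ and $(M_0,\eta)$), together with $u(0,0)=0$ and $\mathrm{diam}(D_\theta(\psi_0))\le C\theta$, one gets $\average_{D_\theta(\psi_0)}|u|^2\le C\theta^{2\gamma}$. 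Since $u_k\to u$ strongly in $L^2$ and $D_\theta(\psi_k)\to D_\theta(\psi_0)$, this contradicts $\average_{D_\theta(\psi_k)}|u_k|^2>\theta^{2\alpha}$ once $\theta$ has been fixed so small that $C\theta^{2\gamma}<\theta^{2\alpha}$.

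\textbf{Step 2 (iteration) and Step 3 (assembly).} By induction on $k$ — normalizing, rescaling $u_\varep(\theta^kx,\theta^{2k}t)$ to unit scale (which replaces $\psi$ by $\psi_{\theta^k}\in\mathcal{P}$ and $\varep$ by $\varep/\theta^k$) and applying Step~1 — one obtains, for $\psi\in\mathcal{P}$, $(\partial_t+\mathcal{L}_\varep)u_\varep=0$ in $D_1(\psi)$, $u_\varep=0$ on $I_1(\psi)$ and $0<\varep<\varep_0\theta^{k-1}$,
$$\average_{D_{\theta^k}(\psi)}|u_\varep|^2\le\theta^{2k\alpha}\average_{D_1(\psi)}|u_\varep|^2 ,$$
which yields (\ref{7.2-2}) for all $\varep/\varep_0\le\rho<1$ (and trivially for $\theta\le\rho<1$). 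For $0<\rho<\varep/\varep_0$ I would blow up, setting $w(x,t)=u_\varep(\varep x,\varep^2t)$: then $(\partial_t+\mathcal{L}_1)w=0$ in $D_{2/\varep_0}(\varep^{-1}\psi(\varep\,\cdot))$ with $w=0$ on the corresponding $I$, and the boundary H\"older estimate for the \emph{fixed} operator $\partial_t+\mathcal{L}_1$ with $VMO_x$ coefficients in a $C^1$ domain — which follows from the boundary $W^{1,p}$ bounds of Byun \cite{Byun-2007} and Krylov \cite{Krylov-2007} via Lemmas~\ref{Poincare-lemma} and \ref{Cam-Lemma}, cf.\ Remark~\ref{remark-2.1} — combined with the already proven case $\rho=2\varep/\varep_0$, gives the estimate on this range. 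The leftover range $\varep\ge\theta\varep_0$ is handled directly by the same fixed-operator bound and Remark~\ref{remark-2.1}. Concatenating the three ranges proves (\ref{7.2-2}).

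\textbf{Main obstacle.} The heart of the matter is Step~1. Two points demand care: making every bound uniform over the graph class $\mathcal{P}$, which forces one to let $\psi=\psi_k$ vary in the contradiction argument and hence to control the convergence of the charts $D_r(\psi_k)$ and of the (vanishing) traces on $I_r(\psi_k)$; and the boundary regularity of the homogenized solution on the limiting $C^1$ graph domain, i.e.\ producing a H\"older exponent $\gamma>\alpha$ up to the boundary for a constant-coefficient parabolic system with zero Dirichlet data, with constants depending only on the $C^1$ character $(M_0,\eta)$. The remaining ingredients — the boundary Caccioppoli inequality, the Aubin--Lions compactness, and the iteration — are routine.
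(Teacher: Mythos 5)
Your argument follows the same three-step compactness scheme as the paper: a one-step improvement lemma proved by contradiction using the Aubin--Lions compactness and Theorem~\ref{homo-theorem-2}, an iteration over scales $\theta^k$, and a blow-up to the fixed operator $\partial_t+\mathcal{L}_1$ for $\rho<\varep/\varep_0$, with the large-$\varep$ regime handled by Remark~\ref{remark-2.1}; this is exactly the paper's Lemmas~\ref{lemma-7.4}--\ref{lemma-7.6} and proof of Theorem~\ref{theorem-7.2}. The only place where you and the paper diverge is a technical one: to obtain strong $L^2$ convergence across the moving graphs $I_1(\psi_k)$, the paper flattens the boundary by the Lipschitz change of variables $v_k(x',x_d,t)=u_k(x',x_d-\psi_k(x'),t)$, so that $v_k$ solves a divergence-form parabolic system on the fixed box $E_r\times(-r^2,0)$ and $\partial_t v_k$ is manifestly bounded in $L^2(W^{-1,2}(E_r))$, whereas you propose a zero-extension of $u_k$ below the graph; the latter needs extra care because the extended function does not solve a divergence-form equation across $I_1(\psi_k)$, so the required bound on $\partial_t\tilde u_k$ in a negative Sobolev norm on the fixed domain is not immediate, and the flattening route is cleaner.
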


As in the case of interior estimates, Theorem \ref{theorem-7.2} will be proved by a compactness argument.

\begin{lemma}\label{lemma-7.4}
Let $\{ A_k(y,s)\}$ be a sequence of matrices satisfying conditions (\ref{e1.3})-(\ref{e1.4}) and $\{\psi_k\}$
a sequence of $C^{1}$ functions satisfying (\ref{condition-psi}).
Suppose that
$$
\partial_t u_k -\text{\rm div} \big( A_k (x/\varep_k, t/\varep_k^2)\nabla u_k \big) =0 \quad 
\text{ in } D_r(\psi_k) \quad \text{ and } \quad
u_k=0 \quad \text{ on } I_r(\psi_k),
$$
where
$\varep_k \to 0$ and
\begin{equation}\label{7.4-1}
\|u_k\|_{L^2(D_r(\psi_k))}
+\| \nabla u_k\|_{L^2(D_r(\psi_k))} \le C.
\end{equation}
Then there exist subsequences of $\{ \psi_k\}$ and $\{ u_k\}$, which we still denote by the same notation,
and a function $\psi$ satisfying (\ref{condition-psi}),
$u\in L^2(D_r(\psi))$, and a constant matrix $A^0$ satisfying (\ref{ellipticity-1}), such that
\begin{equation}\label{7.4-2}
\left\{
\aligned
& \psi_k \to \psi \text{ in } C^1(|x^\prime|<r),\\
& u_k(x^\prime, x_d-\psi_k(x^\prime), t) \to u(x^\prime, x_d-\psi(x^\prime), t)
 \text{ strongly in } L^2(E_r \times (-r^2, 0)),
\endaligned
\right.
\end{equation}
where $E_r = \big\{ (x^\prime, x_d): |x^\prime|<r \text{ and } 0<x_d<10 (M_0+1)r \big\}$,
 and $u$ is a solution of
\begin{equation}\label{7.4-3}
\partial_t u -\text{\rm div}\big(A^0\nabla u\big) =0 \quad \text{ in } D_r(\psi) \quad\text{ and } \quad
u=0 \quad \text{ on } I_r(\psi).
\end{equation}
\end{lemma}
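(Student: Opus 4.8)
The plan is to combine an Arzel\`a--Ascoli argument for the boundary graphs, a boundary-flattening change of variables (used only to put all the $u_k$'s on one fixed cylinder), the Aubin--Lions lemma for strong $L^2$ compactness, and the homogenization theorem (Theorem \ref{homo-theorem-2}); the last of these must be applied in the \emph{original} coordinates, on interior subcylinders of the limiting domain.

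First I would extract the limiting boundary. By \eqref{condition-psi} the functions $\{\psi_k\}$ are equibounded and equi-Lipschitz on $\{|x'|<r\}$ and the gradients $\{\nabla\psi_k\}$ are equicontinuous with modulus $\eta$, so by Arzel\`a--Ascoli, after passing to a subsequence, $\psi_k\to\psi$ in $C^1(\{|x'|<r\})$, where $\psi$ again satisfies \eqref{condition-psi}. Let $\Phi_k$ (resp.\ $\Phi$) denote the change of variables straightening $I_r(\psi_k)$ (resp.\ $I_r(\psi)$): it has Jacobian determinant $1$, is bi-Lipschitz with constants controlled by $M_0$, maps $D_r(\psi_k)$ (resp.\ $D_r(\psi)$) onto a fixed space-time cylinder $\Omega_*=E_r\times(-r^2,0)$ and $I_r(\psi_k)$ (resp.\ $I_r(\psi)$) onto a fixed flat face $\Gamma_0\subset\partial\Omega_*$, and $\Phi_k\to\Phi$ in $C^1$. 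Set $v_k:=u_k\circ\Phi_k^{-1}$ on $\Omega_*$; these are precisely the functions appearing in \eqref{7.4-2}. A direct computation (the pull-back preserves divergence form because $\det D\Phi_k\equiv1$) shows that $v_k$ is a weak solution of $\partial_t v_k-\mathrm{div}(B_k\nabla v_k)=0$ in $\Omega_*$ for a bounded, uniformly elliptic matrix $B_k$ whose constants depend only on $d$, $m$, $\mu$, $M_0$, and $v_k=0$ on $\Gamma_0$.

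Next I would carry out the compactness on $\Omega_*$. By \eqref{7.4-1} and the chain rule, $\{v_k\}$ is bounded in $L^2(-r^2,0;W^{1,2}(E_r))$, and testing the equation for $v_k$ against $\phi\in C_0^\infty(\Omega_*)$ bounds $\{\partial_t v_k\}$ in $L^2(-r^2,0;(W^{1,2}_0(E_r))^*)$. Applying Lemma \ref{embedding-lemma} with $X_0=W^{1,2}(E_r)$, $X=L^2(E_r)$ (the injection $X_0\subset X$ being compact by Rellich's theorem) and $X_1=(W^{1,2}_0(E_r))^*$, I pass to a further subsequence so that $v_k\to v$ strongly in $L^2(\Omega_*)$, while $v_k\rightharpoonup v$ and $\nabla v_k\rightharpoonup\nabla v$ weakly in $L^2(\Omega_*)$; by weak continuity of the bounded linear trace operator, $v=0$ on $\Gamma_0$. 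Setting $u:=v\circ\Phi$ on $D_r(\psi)$, we obtain $u\in L^2(-r^2,0;W^{1,2}(D_r(\psi)))$ with $u=0$ on $I_r(\psi)$, and the strong $L^2$ convergence asserted in \eqref{7.4-2} is exactly the statement $v_k\to v$ in $L^2(\Omega_*)$. To identify the equation for $u$ I would return to the original coordinates: since $\Phi_k\to\Phi$ in $C^1$ and $\det D\Phi_k\equiv1$, the weak convergence of $v_k$ and $\nabla v_k$ on $\Omega_*$ transfers, through the chain rule, to $u_k\rightharpoonup u$ and $\nabla u_k\rightharpoonup\nabla u$ weakly in $L^2_{\mathrm{loc}}(D_r(\psi))$. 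Fix any bounded Lipschitz subcylinder $G\Subset D_r(\psi)$; because $\psi_k\to\psi$ uniformly, $G\subset D_r(\psi_k)$ for all large $k$, where $\partial_t u_k-\mathrm{div}\big(A_k(x/\varep_k,t/\varep_k^2)\nabla u_k\big)=0$. The homogenized matrices $\widehat{A_k}$ are uniformly bounded by \eqref{ellipticity-1}, so after one more passage to a subsequence, $\widehat{A_k}\to A^0$ with $A^0$ satisfying \eqref{ellipticity-1}; Theorem \ref{homo-theorem-2} then gives $\partial_t u-\mathrm{div}(A^0\nabla u)=0$ in $G$. Since $G$ is an arbitrary such subcylinder and $A^0$ does not depend on $G$, this proves \eqref{7.4-3}.

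The main obstacle, and the reason the argument must be split the way it is, is that the boundary flattening and the homogenization cannot be carried out in the same coordinates: the change of variables $\Phi_k$ destroys the $\varep_k$-periodic oscillation structure of $A_k(x/\varep_k,t/\varep_k^2)$, so Theorem \ref{homo-theorem-2} is not applicable to the flattened equation for $v_k$. Straightened coordinates are therefore used only for the soft compactness, the strong $L^2$ convergence, and passing the Dirichlet condition to the limit, while the homogenization is done on interior Lipschitz subcylinders of the limiting domain $D_r(\psi)$ in the original coordinates; the point requiring care is to verify that the weak limits obtained in these two pictures agree under $\Phi_k\to\Phi$ in $C^1$, which is what legitimizes $u=v\circ\Phi$ as a solution of the homogenized system up to the boundary $I_r(\psi)$.
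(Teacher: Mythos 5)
Your argument follows the same route as the paper's proof: Arzel\`a--Ascoli for the boundary graphs, the boundary-flattening shift $v_k(x',x_d,t)=u_k\circ\Phi_k^{-1}$ combined with Aubin--Lions (Lemma \ref{embedding-lemma}) for strong $L^2$ compactness and for passing the Dirichlet condition to the limit, and then Theorem \ref{homo-theorem-2} applied in the original coordinates on interior subcylinders $G\Subset D_r(\psi)$, using $\psi_k\to\psi$ uniformly to ensure $G\subset D_r(\psi_k)$ for large $k$ and extracting $\widehat{A_k}\to A^0$. The extra discussion you include --- why the flattened equation cannot be fed directly into the homogenization theorem, and the bookkeeping that the weak limits agree in the two pictures --- is correct and is exactly the (unstated) reason the paper organizes the argument the way it does.
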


\begin{proof}
By passing to a subsequence we may clearly assume that $\widehat{A_k}\to A^0$.
By the Arzel\'a-Ascoli Theorem we may also assume that
$\psi_k \to \psi$ in $C^1(|x^\prime|<r)$ for some function $\psi$ satisfying (\ref{condition-psi}).
Let 
$$
v_k(x^\prime, x_d, t)=u_k(x^\prime, x_d-\psi_k (x^\prime), t).
$$
Note that  
$$
\aligned
& \{ v_k\} \text{ is bounded in } L^2(-r^2, 0; W^{1,2}(E_r)),\\
& \{ \partial_t v_k \} \text{  is bounded in } L^2(-r^2, 0; W^{-1, 2}(E_r)).
\endaligned
$$
 Hence, by Lemma \ref{embedding-lemma},
we may also assume that $v_k\to v$ strongly in $L^2(E_r\times (-r^2, 0))$
and $\nabla v_k \to \nabla v$ weakly in $L^2(E_r \times (-r^2,0))$.

Let 
$$
u(x^\prime, x_d, t) =v(x^\prime, x_d+\psi(x^\prime), t).
$$
Note that $u=0$ on $I_r (\psi)$.
It is not hard to check that
$u_k \to u$ strongly in $L^2(Q)$ and $\nabla u_k \to \nabla u$ weakly in $L^2(Q)$, for any
$Q\subset \subset D_r(\psi)$.
By Theorem \ref{homo-theorem-2} this implies that
$u$ is a solution of $\partial_t u -\text{div}\big(A^0 \nabla u\big)=0$ in $Q$
for any $Q=
\Omega \times (T_0, T_1)\subset\subset D_r (\psi)$.
Since $\nabla u\in L^2(D_r (\psi))$,
$u$ is also a  solution of $\partial_t u -\text{div}\big(A^0\nabla u\big) =0$ in 
$D_r(\psi)$.
\end{proof}

\begin{lemma}\label{lemma-7.5}
Let $0<\alpha<1$.
Then there exist constants $\varep_0>0$ and $\theta\in (0,1/4)$, depending only on $d$, $m$, $\mu$,
 and $(M_0, \eta (r))$ in (\ref{condition-psi}),
such that
\begin{equation}\label{7.5-1}
\average_{D_\theta (\psi)} |u_\varep|^2 \le \theta^{2\alpha},
\end{equation}
whenever $0<\varep<\varep_0$,
\begin{equation}\label{7.5-2}
\left\{
\aligned
\big( \partial_t +\mathcal{L}_\varep \big) u_\varep  &=0&\quad & \text{ in } D_1 (\psi),\\
u_\varep & = 0& \quad & \text{ on } I_1(\psi),
\endaligned
\right.
\end{equation}
and
\begin{equation}\label{7.5-3}
 \average_{D_1 (\psi)} |u_\varep|^2 \le 1.
\end{equation}
\end{lemma}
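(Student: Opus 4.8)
The plan is to argue by contradiction, in the spirit of the proof of Lemma \ref{lemma-4.1}, with the boundary compactness result Lemma \ref{lemma-7.4} playing the role of the interior one and the boundary regularity of the homogenized system playing the role of the interior decay estimate (\ref{4.1-2}). The fact about the limiting operator that I would invoke is the following. Fix an exponent $\gamma$ with $\alpha<\gamma<1$ (say $\gamma=(1+\alpha)/2$). Then for every constant matrix $A^0$ satisfying (\ref{ellipticity-1}), every $\psi$ satisfying (\ref{condition-psi}), and every solution $u$ of $\partial_t u-\text{div}(A^0\nabla u)=0$ in $D_{1/2}(\psi)$ with $u=0$ on $I_{1/2}(\psi)$, one has
\begin{equation*}
\average_{D_\theta(\psi)}|u|^2\le C_0\,\theta^{2\gamma}\average_{D_{1/2}(\psi)}|u|^2 \qquad\text{for all }\theta\in(0,1/4),
\end{equation*}
where $C_0$ depends only on $d$, $m$, $\mu$, $\gamma$, and the $C^1$ data $(M_0,\eta)$ in (\ref{condition-psi}). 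This is classical: after flattening the boundary by the $C^1$ change of variables $x_d\mapsto x_d-\psi(x')$, it is the boundary H\"older estimate for second-order parabolic systems with constant coefficients in $C^1$ cylinders. The point to stress is that only a $C^1$ bound on $\psi$ enters, so any exponent below $1$ is available (but not $1$ itself); this is precisely what forces the eventual boundary estimate to be H\"older rather than Lipschitz, and it is all that is needed here.

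Next I would fix $\theta\in(0,1/4)$ so small that $2^{d+2}C_0\,\theta^{2\gamma}<\theta^{2\alpha}$, which is possible because $\gamma>\alpha$; this choice of $\theta$ depends only on $d$, $m$, $\mu$, $\alpha$, $M_0$, $\eta$. Suppose the lemma were false for this $\theta$. Then for each $k$ there would exist $\varep_k\to0$, matrices $A_k$ obeying (\ref{e1.3})--(\ref{e1.4}), functions $\psi_k$ obeying (\ref{condition-psi}), and solutions $u_k$ of $\partial_t u_k-\text{div}(A_k(x/\varep_k,t/\varep_k^2)\nabla u_k)=0$ in $D_1(\psi_k)$ with $u_k=0$ on $I_1(\psi_k)$, normalized by $\average_{D_1(\psi_k)}|u_k|^2\le1$, yet with $\average_{D_\theta(\psi_k)}|u_k|^2>\theta^{2\alpha}$. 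A standard boundary energy estimate---the Caccioppoli inequality of Lemma \ref{Cacci-Lemma} adapted to $D_{3/4}(\psi_k)$, valid because $u_k$ vanishes on $I_1(\psi_k)$ so that cutoffs of $u_k$ are admissible test functions---yields $\|u_k\|_{L^2(D_{3/4}(\psi_k))}+\|\nabla u_k\|_{L^2(D_{3/4}(\psi_k))}\le C$, which is exactly hypothesis (\ref{7.4-1}) with $r=3/4$. Hence Lemma \ref{lemma-7.4} applies: along a subsequence, $\psi_k\to\psi$ in $C^1$, $\widehat{A_k}\to A^0$ with $A^0$ constant satisfying (\ref{ellipticity-1}), the $C^1$-flattened functions converge strongly in $L^2$, and the limit $u$ solves $\partial_t u-\text{div}(A^0\nabla u)=0$ in $D_{3/4}(\psi)$ with $u=0$ on $I_{3/4}(\psi)$.

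It remains to pass to the limit. Since the flattening carries $D_\rho(\psi_k)$ onto a region that is independent of $k$ and $|D_\rho(\psi_k)|=c_d(M_0)\,\rho^{d+2}$ is independent of $k$, the strong $L^2$ convergence of the flattened functions transfers to $\average_{D_\rho(\psi_k)}|u_k|^2\to\average_{D_\rho(\psi)}|u|^2$ for every fixed $\rho\le3/4$. Therefore $\average_{D_\theta(\psi)}|u|^2\ge\theta^{2\alpha}$, while $D_{1/2}\subset D_1$ together with the ratio of volumes gives $\average_{D_{1/2}(\psi)}|u|^2=\lim_k\average_{D_{1/2}(\psi_k)}|u_k|^2\le2^{d+2}$. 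Feeding these two bounds into the homogenized boundary estimate above produces $\theta^{2\alpha}\le2^{d+2}C_0\,\theta^{2\gamma}$, contradicting the choice of $\theta$. This proves (\ref{7.5-1}) for some $\varep_0>0$, and tracing the argument (as in Lemma \ref{lemma-4.1}) shows $\varep_0$ may be taken to depend only on the same quantities as $\theta$.

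The only genuinely delicate point I anticipate is the first step: making sure the constant $C_0$ in the homogenized boundary estimate is uniform over all $\psi$ satisfying (\ref{condition-psi}) and over all constant matrices obeying (\ref{ellipticity-1}), so that $\theta$ and $\varep_0$ depend only on the stated data. This is where the hypotheses on $\psi$ actually get used, but it is a standard consequence of the $C^1$ boundary regularity theory for constant-coefficient parabolic systems after flattening; the rest is a routine adaptation of the interior argument of Lemmas \ref{lemma-4.1}--\ref{lemma-4.2}.
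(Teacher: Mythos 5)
Your proposal is correct and follows essentially the same route as the paper's proof: you fix $\gamma=(1+\alpha)/2\in(\alpha,1)$ (the paper calls it $\sigma$), invoke the boundary H\"older decay for the constant-coefficient parabolic system in $C^1$ cylinders, choose $\theta$ so that the decay with exponent $2\gamma$ beats $\theta^{2\alpha}$, run the contradiction argument via the boundary energy estimate and Lemma \ref{lemma-7.4}, and pass to the limit using the strong $L^2$ convergence of the flattened functions. The only differences are cosmetic (the paper normalizes the homogenized estimate so that $\average_{D_{1/2}}|u|^2\le 1$ and works on $D_{1/2}$ rather than $D_{3/4}$; the volume factor $2^d$ versus $2^{d+2}$), and do not affect the logic.
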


\begin{proof} Let $\sigma =(1+\alpha)/2\in (\alpha, 1)$.
The lemma is proved by contradiction, using Lemma \ref{lemma-7.4} and the following regularity estimate:
\begin{equation}\label{7.5-4}
\average_{D_r (\psi)} |w|^2 \le C_0 r^{2\sigma} \quad \text{ for any } 0<r<1/4,
\end{equation}
whenever
\begin{equation}\label{7.5-5}
\left\{
\aligned
\partial_t w -\text{div} \big(A^0\nabla w\big)  & =0 \ \ \ \  \text{ in } D_{1/2} (\psi),\\
 w & =0 \ \ \ \ \text{ on } I_{1/2}(\psi),\\
  \average_{D_{1/2}(\psi)} |w|^2 & \le 1,
 \endaligned
 \right.
 \end{equation}
and $A^0$  in (\ref{7.5-5}) is a constant matrix satisfying (\ref{ellipticity-1}). We remark that the estimate (\ref{7.5-4})
follows from the boundary H\"older estimate:
$$
\| w\|_{C^{\sigma, \sigma/2} (D_{1/4}(\psi))}
\le C \| w\|_{L^2(D_{1/2}(\psi))} $$
for second-order parabolic systems with constant coefficients in $C^1$ cylinders, and
 the constant $C_0$ in (\ref{7.5-4})
depends only on $d$, $m$, $\mu$, and $(M_0, \eta (r))$ in (\ref{condition-psi}).
 
Choose $\theta\in (0,1/4)$ so small that $2^d C_0 \theta^{2\sigma} < \theta^{2\alpha}$.
We claim that for this $\theta$, there exists some $\varep_0>0$, depending only on $d$, $m$, $\mu$, 
and $(M_0, \eta(r))$,
such that the estimate (\ref{7.5-1}) holds, whenever $0<\varep<\varep_0$ and $u_\varep$ satisfies
(\ref{7.5-2}) and (\ref{7.5-3}).

Suppose this is not the case. Then there exist sequences $\{ \varep_k\}$, $\{A_k\}$, $\{ \psi_k\}$, and $\{ u_k\}$,
such that $\varep_k \to 0$, $A_k$ satisfies (\ref{e1.3})-(\ref{e1.4}), $\psi_k$ satisfies (\ref{condition-psi}),
\begin{equation}\label{7.5-6}
\left\{
\aligned
\partial_t u_k -\text{div} \big( A_k (x/\varep_k, t/\varep_k^2)\nabla u_k\big) & =0 \ \ \ \ \text{ in } D_1(\psi_k),\\
u_k & =0 \ \ \ \ \text{ on } I_1(\psi_k),\\
\average_{D_1(\psi_k)} |u_k|^2 & \le 1,
\endaligned
\right.
\end{equation}
and
\begin{equation}\label{7.5-7}
\average_{D_\theta(\psi_k)} |u_k|^2 > \theta^{2\alpha}.
\end{equation}
Since $\| u_k\|_{L^2(D_1(\psi_k))}$  is bounded,
by  the energy estimates, it follows that $\| \nabla u_k\|_{L^2(D_{1/2}(\psi_k))}$ is bounded.
This allows us to use Lemma \ref{lemma-7.4}.

Indeed, in view of Lemma \ref{lemma-7.4}, by passing to a subsequence, we may assume that
$u_k \to u $ strongly in $L^2(Q)$ for any $Q\subset\subset D_{1/2}(\psi)$, and $u$ is a  solution of
$\partial_t u -\text{div} \big(A^0\nabla u\big) =0$ in $D_{1/2}(\psi)$ and
$u=0$ on $I_{1/2}(\psi)$, where $A^0$ is a constant matrix satisfying (\ref{ellipticity-1}) and
$\psi$ satisfies (\ref{condition-psi}).
It is not hard to see that
$$
\average_{D_{1/2} (\psi)} |u|^2 \le 1.
$$

Finally, it follows from (\ref{7.5-4})-(\ref{7.5-5}) that
$$
\average_{D_\theta (\psi)} |u|^2 \le C_0 \theta^{2\sigma}.
$$
However, since $ u_k (x^\prime, x_d-\psi_k (x^\prime), t )\to u(x^\prime, x_d -\psi(x^\prime), t)$
strongly in $L^2(E_{1/2}\times (-1/4,0))$, we may deduce from (\ref{7.5-7}) that
$$
\average_{D_\theta (\psi)} |u|^2\ge \theta^{2\alpha}.
$$
This leads to $\theta^{2\alpha}\le C_0 \theta^{2\sigma}$, which is in contradiction with the choice of $\theta$.
The proof is complete.
\end{proof}

\begin{lemma}\label{lemma-7.6}
Fix $0<\alpha<1$.
Let $\varep_0$ and $\theta$ be the constants given by Lemma \ref{lemma-7.5}.
Suppose that $\big(\partial_t +\mathcal{L}_\varep\big) u_\varep =0$
in $D_1(\psi)$ and $u_\varep =0$ on $I_1(\psi)$.
Then, if $0<\varep<\theta^{k-1}\varep_0$ for some $k\ge 1$,
\begin{equation}\label{7.6-1}
\average_{D_{\theta^k} (\psi)} |u_\varep|^2
\le \theta^{2k\alpha} 
\average_{D_1(\psi)} |u_\varep|^2\end{equation}
\end{lemma}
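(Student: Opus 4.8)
The plan is to argue by induction on $k$, in close parallel to the proof of Lemma \ref{lemma-4.2}, with Lemma \ref{lemma-7.5} serving as the base step and the parabolic rescaling property (\ref{rescaling}) driving the iteration. For $k=1$ the assertion is exactly Lemma \ref{lemma-7.5} after a normalization: if $\average_{D_1(\psi)}|u_\varep|^2=0$ then $u_\varep\equiv 0$ and there is nothing to prove, while otherwise one applies Lemma \ref{lemma-7.5} to $u_\varep\big/\{\average_{D_1(\psi)}|u_\varep|^2\}^{1/2}$, which solves the same homogeneous problem with zero data on $I_1(\psi)$.

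For the inductive step, suppose (\ref{7.6-1}) holds for some $k\ge 1$. Let $0<\varep<\theta^k\varep_0$ and let $u_\varep$ solve $\big(\partial_t+\mathcal{L}_\varep\big)u_\varep=0$ in $D_1(\psi)$ with $u_\varep=0$ on $I_1(\psi)$; we may assume $c:=\{\average_{D_1(\psi)}|u_\varep|^2\}^{1/2}\ne 0$, the other case being trivial. Set $\psi^*(y')=\theta^{-k}\psi(\theta^k y')$ and
$$
w(y,s)=\theta^{-k\alpha}\,\frac{u_\varep(\theta^k y,\theta^{2k}s)}{c}.
$$
Three points need to be checked. First, $\psi^*$ still satisfies (\ref{condition-psi}) with the \emph{same} constants $M_0$ and $\eta$: indeed $\psi^*(0)=0$, $\|\nabla\psi^*\|_{L^\infty}=\|\nabla\psi(\theta^k\,\cdot)\|_{L^\infty}\le M_0$, and $|\nabla\psi^*(y')-\nabla\psi^*(z')|\le\eta(\theta^k|y'-z'|)\le\eta(|y'-z'|)$ because $\theta<1$ and $\eta$ is increasing. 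Second, under the anisotropic scaling $x=\theta^k y$, $t=\theta^{2k}s$ the region $D_{\theta^k}(\psi)$ is carried bijectively onto $D_1(\psi^*)$, so that by the change-of-variables formula and the induction hypothesis
$$
\average_{D_1(\psi^*)}|w|^2=\theta^{-2k\alpha}\,c^{-2}\average_{D_{\theta^k}(\psi)}|u_\varep|^2\le \theta^{-2k\alpha}\,c^{-2}\,\theta^{2k\alpha}c^2=1.
$$
Third, by the rescaling property (\ref{rescaling}) (the constant $\theta^{-k\alpha}/c$ being irrelevant for a homogeneous equation), $\big(\partial_t+\mathcal{L}_{\varep/\theta^k}\big)w=0$ in $D_1(\psi^*)$ and $w=0$ on $I_1(\psi^*)$.

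Since $\varep/\theta^k<\varep_0$ and $\psi^*$ satisfies (\ref{condition-psi}), Lemma \ref{lemma-7.5} applies to $w$ and gives $\average_{D_\theta(\psi^*)}|w|^2\le\theta^{2\alpha}$. Under the same scaling $D_\theta(\psi^*)$ is carried onto $D_{\theta^{k+1}}(\psi)$ (here one uses $\theta^k\psi^*(y')=\psi(\theta^k y')$), whence $\average_{D_\theta(\psi^*)}|w|^2=\theta^{-2k\alpha}c^{-2}\average_{D_{\theta^{k+1}}(\psi)}|u_\varep|^2$, and therefore
$$
\average_{D_{\theta^{k+1}}(\psi)}|u_\varep|^2\le\theta^{2(k+1)\alpha}\,\average_{D_1(\psi)}|u_\varep|^2,
$$
which is (\ref{7.6-1}) for $k+1$ and closes the induction. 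The only delicate points are the bookkeeping for how the curved regions $D_r(\psi)$ transform under the parabolic scaling and the verification that the flattened boundary function $\psi^*$ inherits the bounds (\ref{condition-psi}) uniformly in $k$; both are elementary given $\theta<1$ and the monotonicity of $\eta$, and I expect the former — correctly identifying $D_\theta(\psi^*)$ with $D_{\theta^{k+1}}(\psi)$ after rescaling — to be the step where a reader will most want the details spelled out.
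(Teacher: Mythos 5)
Your proof is correct and follows essentially the same strategy as the paper's: an induction on $k$ driven by the parabolic rescaling $(x,t)\mapsto(\theta^k x,\theta^{2k}t)$, with Lemma \ref{lemma-7.5} providing the base step and the key observation that the flattening function $\psi_k(x')=\theta^{-k}\psi(\theta^k x')$ still satisfies (\ref{condition-psi}) with the same $M_0$ and $\eta$. The paper's own proof applies Lemma \ref{lemma-7.5} to the unnormalized rescaled function and then invokes the induction hypothesis at the end, rather than folding the $\theta^{-k\alpha}/c$ normalization into the definition of $w$ as you do, but the bookkeeping is equivalent and both routes yield (\ref{7.6-1}).
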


\begin{proof}
The lemma is proved by induction.
We first note that the case $k=1$ is given by Lemma \ref{lemma-7.5}.
Suppose now that the lemma holds for some $k\ge 1$.
Let $0<\varep<\theta^k \varep_0$.
We apply Lemma \ref{lemma-7.5} to the function $w(x,t) =u_\varep (\theta^k x, \theta^{2k}t)$ in 
$D_1(\psi_k)$, where $\psi_k (x^\prime) =\theta^{-k} \psi(\theta^kx^\prime)$.
Observe that 
$$
\big( \partial_t +\mathcal{L}_{\frac{\varep}{\theta^k}} \big) w= 0 \quad \text{ in } D_1 (\psi_k),
$$
$\theta^{-k}\varep<\varep_0$, and $\psi_k$ satisfies the condition (\ref{condition-psi}).
It follow from Lemma \ref{lemma-7.5} that
$$
\aligned
\average_{D_{\theta^{k+1}} (\psi)} |u_\varep|^2
& =\average_{D_\theta (\psi_k)} | w|^2
\le \theta^{2\alpha}
\average_{D_1(\psi_k)} |w|^2
 =\theta^{2\alpha} 
 \average_{D_{\theta^k} (\psi)} |u_\varep|^2\\
& \le \theta^{2 (k+1)\alpha}
 \average_{D_1(\psi)} |u_\varep|^2,
\endaligned
$$
where we have used the induction assumption in the last inequality. 
This complete the proof.
\end{proof}

\begin{proof}[\bf Proof of Theorem \ref{theorem-7.2}]

By rescaling we may assume that $r=1$.
We may also assume that $\varep<\varep_0$, since the case $\varep\ge \varep_0$ follows from the boundary
H\"older estimates for second-order parabolic systems in divergence form with $VMO_x$ coefficients
in $C^1$ cylinders. Such estimates may be deduced from the $W^{1,p}$
estimates in \cite{Byun-2007}.
We may further assume that 
$ \| u_\varep\|_{L^2(D_1(\psi))} \le 1.
$
Under these assumptions we will show that
\begin{equation}\label{7.7-1}
\average_{D_\rho (\psi)} |u_\varep|^2 \le C \rho^{2\alpha},
\end{equation}
for any $0<\rho<1$.

To prove (\ref{7.7-1}), we first consider the case $\rho\ge \varep/\varep_0$.
Choose $k\ge 1$ such that $\theta^k\le \rho<\theta^{k-1}$.
Since $\varep\le \varep_0\rho<\varep_0 \theta^{k-1}$,
it follows from Lemma \ref{lemma-7.6} that
$$
\average_{D_\rho(\psi)} |u_\varep|^2
\le C \average_{D_{\theta^{k-1}} (\psi)} |u_\varep|^2
\le C \theta^{2k\alpha} 
\le C \rho^{2\alpha}.
$$

Next suppose that $0<\rho< \varep/\varep_0$.
Let $w(x,t)=u_\varep (\varep x, \varep^2 t)$.
Then $\big(\partial_t +\mathcal{L}_1 \big) w=0$ in $D_{1/\varep_0} (\psi_\varep)$, 
where $\psi_\varep (x^\prime)=\varep^{-1}\psi(\varep x^\prime)$.
By the boundary H\"older estimates for the parabolic operator $\partial_t +\mathcal{L}_1$ in $C^1$
cylinders,
we see that
$$
\average_{D_{\rho /\varep} (\psi_\varep)} |w|^2 
\le C \left(\frac{\rho}{\varep}\right)^{2\alpha}
 \average_{D_{1/\varep_0} (\psi_\varep)} |w|^2,
$$
where $C$ depends at most on $d$, $m$, $\alpha$, $\mu$, $A^\#$,
and $(M_0, \eta(r))$ in (\ref{condition-psi}).
This yields
$$
\average_{D_\rho(\psi)} |u_\varep|^2 
\le C \left(\frac{ \rho}{\varep}\right)^{2\alpha}
 \average_{D_{\varep/\varep_0} (\psi)} 
|u_\varep|^2 
\le C \rho^{2\alpha},
$$
where we have used the estimate (\ref{7.7-1}) for the case $\rho=\varep/\varep_0$ 
in the last inequality.
This completes the proof of Theorem \ref{theorem-7.2}.
\end{proof}

Before we give the proof of Theorem \ref{theorem-7.1} we make a few remarks
on the  change of the coordinate system by a rotation and its effects on the operator $\mathcal{L}_\varep$.
It is clear that a general rotation would destroy the $Y$-periodicity of the coefficients of the operator.
However, the following theorem, whose proof may be found in \cite{Schmutz-2008},  may be used to resolve this issue.

\begin{theorem}\label{rotation-theorem}
Let $\mathcal{O}=\big(\mathcal{O}_{ij}\big)$ be a $d\times d$ orthogonal matrix.
For any $\delta>0$, there exists a $d\times d$ orthogonal matrix $\mathcal{R}=\big(\mathcal{R}_{ij}\big)$
with rational entries such that
(1) $\| \mathcal{O}-\mathcal{R}\|_\infty =\sup_{ij} |\mathcal{O}_{ij} -\mathcal{R}_{ij} | <\delta$; 
(2) each entry of $\mathcal{R}$ has denominator less than a constant depending only on $d$ and $\delta$.
\end{theorem}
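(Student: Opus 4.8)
The plan is to deduce the statement from two ingredients: (i) the density in $O(d)$ of orthogonal matrices with rational entries, and (ii) a compactness argument turning this density into the uniform bound on denominators.

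For (i) I would argue on the two connected components of $O(d)$. On $SO(d)$ the tool is the Cayley transform $\Phi(S)=(I-S)(I+S)^{-1}$, defined on the vector space of $d\times d$ real skew-symmetric matrices (note $I+S$ is always invertible, since a real skew-symmetric matrix has purely imaginary eigenvalues). Using $S^{T}=-S$ and that polynomials in $S$ commute, one checks directly that $\Phi(S)$ is orthogonal with $\det\Phi(S)=1$; conversely, for $R\in O(d)$ with $\det(I+R)\neq0$ the matrix $(I-R)(I+R)^{-1}$ is skew-symmetric and is sent back to $R$ by $\Phi$. Hence $\Phi$ is a homeomorphism from the skew-symmetric matrices onto $U:=\{R\in O(d):\det(I+R)\neq0\}$, and in particular $U\subset SO(d)$. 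Since $R\mapsto\det(I+R)$ is a polynomial on the connected manifold $SO(d)$ that is nonzero at $R=I$, the set $U$ is dense in $SO(d)$. Finally, if $S$ has rational entries then $I\pm S$ are rational and nonsingular, so $\Phi(S)$ is a rational orthogonal matrix; as rational skew-symmetric matrices are dense in all skew-symmetric matrices, $\Phi$ carries them to a dense family of rational orthogonal matrices in $U$, hence in $\overline{U}=SO(d)$.

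The component $\{R\in O(d):\det R=-1\}$ requires separate treatment, and this is the step I expect to be the main obstacle: every orthogonal matrix of determinant $-1$ has $-1$ as an eigenvalue, so $I+R$ is singular there and that whole component lies outside the domain $U$ of $\Phi$. I would get around this by fixing the rational reflection $J=\mathrm{diag}(1,\dots,1,-1)$: left multiplication by $J$ is a homeomorphism of $SO(d)$ onto this component, it carries rational matrices to rational matrices, and it leaves $\|\cdot\|_{\infty}$ on the entries unchanged (it only flips the sign of one row). Hence rational orthogonal matrices are dense in this component as well, and therefore dense in all of $O(d)$.

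For (ii), recall $O(d)$ is compact, being closed and bounded in $\mathbb{R}^{d^{2}}$. Fix $\delta>0$. For each rational orthogonal matrix $\mathcal{R}$ set $V_{\mathcal{R}}=\{X\in O(d):\|X-\mathcal{R}\|_{\infty}<\delta\}$, an open subset of $O(d)$; by the density from (i) the family $\{V_{\mathcal{R}}\}$ covers $O(d)$. Extract a finite subcover $V_{\mathcal{R}_{1}},\dots,V_{\mathcal{R}_{m}}$ and let $N=N(d,\delta)$ be one more than the largest denominator occurring among the entries of $\mathcal{R}_{1},\dots,\mathcal{R}_{m}$. Then any orthogonal $\mathcal{O}$ lies in some $V_{\mathcal{R}_{j}}$, so $\mathcal{R}:=\mathcal{R}_{j}$ is orthogonal with rational entries, $\|\mathcal{O}-\mathcal{R}\|_{\infty}<\delta$, and every entry of $\mathcal{R}$ has denominator below $N$, which is exactly (1)--(2). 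Apart from the point highlighted in the third paragraph, the argument uses only routine linear algebra and the density of $\mathbb{Q}$ in $\mathbb{R}$, so I anticipate no analytic difficulties.
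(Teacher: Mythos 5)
Your argument is correct, but it differs in spirit from the source the paper relies on. The paper does not prove this statement internally; it cites Schmutz's 2008 paper, where the result is obtained constructively by approximating points on the unit sphere by rational points with explicit denominator control (via stereographic projection) and then building up an orthogonal matrix column by column with a rational Gram--Schmidt-type procedure. That route produces an explicit, computable bound $N(d,\delta)$. Your route instead splits the problem into density plus compactness: the Cayley transform $\Phi(S)=(I-S)(I+S)^{-1}$ gives a rational parametrization of the dense open set $U=\{R\in O(d):\det(I+R)\neq 0\}\subset SO(d)$ (with the rational skew-symmetric matrices mapping to rational orthogonal matrices), the reflection $J=\mathrm{diag}(1,\dots,1,-1)$ transfers density to the other component, and a finite subcover of the compact group $O(d)$ by $\delta$-balls centered at rational orthogonal matrices yields a bound on denominators by simple inspection. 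All the individual steps check out: $I+S$ is invertible for skew-symmetric $S$ because such $S$ has purely imaginary spectrum; $\Phi(S)$ is indeed orthogonal with determinant $1$ and inverts correctly; the set $U$ is dense in $SO(d)$ because the nonzero polynomial $R\mapsto\det(I+R)$ on a connected manifold has nowhere-dense zero set; every orthogonal matrix of determinant $-1$ does have $-1$ as an eigenvalue; and multiplication by $J$ only flips one row, hence preserves $\|\cdot\|_\infty$. The trade-off is that your compactness step is non-constructive -- you get a bound $N(d,\delta)$ but no formula for it -- whereas Schmutz's argument is quantitative. Since the theorem only asserts existence of such a constant, both routes suffice, and yours is shorter and more elementary.
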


Consider
$$
 \partial_t u_\varep -\text{div} \big( A(x/\varep, t/\varep^2)\nabla u_\varep\big) =F(x,t)
 \quad \text{ in } \big[\Omega \cap B(0, r_0)\big] \times (-r_0^2 , 0),
 $$
where $\Omega$ is a bounded Lipschitz domain in $\mathbb{R}^d$, $0\in \partial\Omega$, and
$r_0>0$ is  small.
There exists an orthogonal matrix $\mathcal{O}$ such that
$$
\Omega_1 \cap B(0, r_0) =\big\{ (y^\prime, y_d)\in \mathbb{R}^d: \ y_d >\psi_1 (y^\prime) \big\} \cap B(0, r_0),
$$
where 
$
\Omega_1 =\big\{ y\in \mathbb{R}^d: \ y= \mathcal{O} x \text{ for some } x\in \Omega\big\},
$
and $\psi_1$ is a Lipschitz function in $\mathbb{R}^{d-1}$ such that $\psi_1 (0)=0$ and
$\|\nabla \psi_1\|_\infty\le M$.
Observe that if $\mathcal{R}$ is an orthogonal matrix such that $\|\mathcal{R}-\mathcal{O}\|_\infty
<\delta$, where $\delta>0$, depending only on $d$ and $M$, is sufficiently small, then
$$
\Omega_2 \cap B(0, r_0/2) =\big\{ (y^\prime, y_d)\in \mathbb{R}^d: \ y_d >\psi_2 (y^\prime) \big\} \cap B(0, r_0/2),
$$
where 
$
\Omega_2 =\big\{ y\in \mathbb{R}^d: \ y= \mathcal{R} x \text{ for some } x\in \Omega\big\},
$
and $\psi_2$ is a Lipschitz function in $\mathbb{R}^{d-1}$ such that $\psi_2 (0)=0$ and
$\|\nabla \psi_2\|_\infty\le M+1$.
Since $\|\mathcal{O}-\mathcal{R}\|_\infty \le C_d \| \mathcal{O}^{-1} -\mathcal{R}^{-1}\|_\infty$,
in view of Theorem \ref{rotation-theorem}, we may choose $\mathcal{R}=\big(\mathcal{R}_{ij}\big)$ in such a way that
$\mathcal{R}^{-1}$ is an orthogonal matrix with rational entries and $N\mathcal{R}^{-1}$
is a matrix with integer entries, where $N$ is a large positive integer depending only on $d$ and $M$.

Let $ w_\varep (y,t)=u_\varep (x, N^2 t)$, where $y=N^{-1} \mathcal{R} x$.
Then
$$
\partial_t w_\varep - \text{div} \big( H(y/\varep, t/\varep^2) \nabla_y w_\varep \big) =\widetilde{F} (y,t)
$$
in $ \big[\Omega_3 \cap B(0, r_0/(2N))\big] \times (-r_0^2/(2N)^2, 0)$, where 
$$
\Omega_3 =\big\{ y\in \mathbb{R}^d:\ y=N^{-1} \mathcal{R} x \text{ for some } x\in \Omega \big\},
$$
$H(y,t)= (h_{ij}^{\alpha\beta} (y,t))$ with 
$$
h_{ij}^{\alpha\beta} (y,t) =\mathcal{R}_{ik} \mathcal{R}_{j\ell}
a_{k\ell}^{\alpha\beta} (N\mathcal{R}^{-1} y, N^2 t),
$$
and $\widetilde{F}(y,t)=N^2 F(N \mathcal{R}^{-1} x, N^2 t)$.
Note that $H(y,t)$ is elliptic and periodic with respect to $\mathbb{Z}^{d+1}$ and
$$
\Omega_3 \cap B(0, r_0/(2N))
=\big\{ (y^\prime, y_d)\in \mathbb{R}^d: \ y_d >\psi_2 (y^\prime) \big\} \cap B(0, r_0/(2N)).
$$
As a result, in the study of uniform boundary estimates for $\partial_t +\mathcal{L}_\varep$,
we may localize the problems to the setting where $\Omega\cap B(P, r_0)$ is given by the region above a graph.

\begin{proof}[\bf Proof of Theorem \ref{theorem-7.1}]
Suppose that $\big(\partial_t +\mathcal{L}_\varep\big) u_\varep =0$ in $\Omega_{2r} (x_0, t_0)$
and $u_\varep =0$ on $\Delta_{2r} (x_0, t_0)$, for
some $x_0\in \partial\Omega$, $t_0\in \mathbb{R}$, and $r>0$ small.
By translation and dilation we may assume that $r=1$ and $(x_0, t_0)=(0,0)$.
We may also assume that $\| u_\varep\|_{L^2(\Omega_2 (0,0))} \le 1$.
By the localization procedure described above we may further assume that
$$
\Omega\cap B(0, 4) =\big\{ (x^\prime, x_d)\in \mathbb{R}^d: \ 
x_d>\psi (x^\prime) \big\} \cap B(0, 4),
$$ 
where $\psi$ is a $C^1$ function in $\mathbb{R}^{d-1}$ satisfying $\psi (0)=0$ and
the condition (\ref{condition-psi}).

Using Theorem \ref{theorem-7.2}, we may deduce that for $0<\rho<1/4$,
$$
\left(\average_{\Omega_\rho (0, 0)} |u_\varep -\average_{\Omega_\rho (0,0)} u_\varep|^2\right)^{1/2}
\le C \rho^\alpha \| u_\varep\|_{L^2(\Omega_1 (0,0))}.
$$
By translation it follows that for any $(x,t)\in \Delta_1 (0,0)=\big[ B(0,1)\cap \partial\Omega\big] \times (-1, 0)$ and
$0<\rho<1/4$,
\begin{equation}\label{7.10-1}
\left(\average_{\Omega_\rho (x, t)} |u_\varep -\average_{\Omega_\rho (x,t)} u_\varep|^2\right)^{1/2}
\le C \rho^\alpha \| u_\varep\|_{L^2(\Omega_2 (0,0))},
\end{equation}
where $C$ depends only on $d$, $m$, $A$, and $\Omega$.
This, together with the interior H\"older estimates, implies that
the estimate (\ref{7.10-1}) in fact holds for any $(x,t)\in \Omega_1 (0,0)$ and $0<\rho<1/4$.
By the Campanato characterization of H\"older spaces we obtain
$$
\| u_\varep\|_{C^{\alpha, \alpha/2} (\Omega_1 (0,0))}
\le C \, \| u_\varep\|_{L^2(\Omega_2 (0,0))}.
$$
This completes the proof of Theorem \ref{theorem-7.1}.
\end{proof}


\section{Boundary $W^{1,p}$ estimates and proof of Theorems \ref{main-theorem-3} and
\ref{main-theorem-4}}

\setcounter{equation}{0}

In this section we study the boundary $W^{1,p}$ estimates for solutions of
$\big(\partial_t +\mathcal{L}_\varep \big) u_\varep =\text{div} (f)$.
We first treat the case $f=0$.

\begin{theorem}\label{theorem-9.1}
Assume that $A$ satisfies conditions (\ref{e1.3})-(\ref{e1.4}) and $A\in VMO_x$.
Let $\Omega$ be a $C^1$ domain in $\mathbb{R}^d$.
Suppose that $\big(\partial_t +\mathcal{L}_\varep \big) u_\varep =0 $
in $\Omega_{2r} (x_0, t_0)$ and $u_\varep=0$ on $\Delta_{2r} (x_0, t_0)$
for some $x_0\in \partial\Omega$ and $t_0\in \mathbb{R}$,
where $0<r< \text{diam}(\Omega)$.
Then, for any $p>2$,
\begin{equation}\label{9.1-1}
\left(\average_{\Omega_{r/4} (x_0, t_0)} |\nabla u_\varep|^p \right)^{1/p}
\le \frac{C_p}{r}
\left(\average_{\Omega_{2r} (x_0, t_0)} |u_\varep|^2 \right)^{1/2},
\end{equation}
where $C_p$ depends at most on $d$, $m$, $A$, and $\Omega$.
\end{theorem}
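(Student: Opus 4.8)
The plan is to mimic the proof of the interior estimate (Theorem~\ref{theorem-10.1}), but to replace the boundary Lipschitz estimate --- which is not available in a merely $C^1$ domain --- by the boundary H\"older estimate of Theorem~\ref{theorem-7.1}, combined with a Whitney--type covering of $\Omega_{r/4}(x_0,t_0)$. First I would reduce, by translation and dilation, to $r=1$ and $(x_0,t_0)=(0,0)$, normalize so that $\average_{\Omega_2(0,0)}|u_\varep|^2\le 1$, and invoke the rotation and localization procedure of the previous section (based on Theorem~\ref{rotation-theorem}) to assume that $\Omega\cap B(0,4)=\{(x',x_d):x_d>\psi(x')\}\cap B(0,4)$ for a $C^1$ function $\psi$ with $\psi(0)=0$ satisfying (\ref{condition-psi}). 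When $\varep\ge\varep_0$ for a fixed $\varep_0>0$ to be chosen, (\ref{9.1-1}) follows directly from the boundary $W^{1,p}$ estimate for parabolic operators with $VMO_x$ coefficients in $C^1$ cylinders (\cite{Byun-2007}): since $(A_\varep)^\#(\rho)=A^\#(\rho/\varep)\le A^\#(\rho/\varep_0)$, the relevant constant is uniform in $\varep\ge\varep_0$ (cf. Remark~\ref{remark-2.1}). So we may assume $0<\varep<\varep_0$.

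Fix the H\"older exponent $\alpha=1-\frac{1}{2p}\in\big(1-\frac1p,\,1\big)$; by Theorems~\ref{theorem-7.1}--\ref{theorem-7.2} with this $\alpha$ (and the normalization) we have $\average_{\Omega_\rho(z^*,\tau)}|u_\varep|^2\le C\rho^{2\alpha}$ for every $z^*\in\partial\Omega$ near the origin and every small $\rho$. The core of the argument is the following claim: for each $(z,\tau)\in\Omega_{1/4}(0,0)$ there is a backward parabolic cylinder $\mathcal Q\ni(z,\tau)$ of radius $\rho$ comparable to $\max\{\text{dist}(z,\partial\Omega),\varep\}$ (in particular $\rho\gtrsim\varep$) such that
\[
\Big(\average_{\mathcal Q}|\nabla u_\varep|^p\Big)^{1/p}\le C\,\rho^{\alpha-1}.
\]
If $\delta:=\text{dist}(z,\partial\Omega)\ge 8\varep$, take $\mathcal Q=Q_{\delta/4}(z,\tau)$: then $Q_{\delta/2}(z,\tau)\subset\Omega_2(0,0)$ with $u_\varep$ a solution there, so Theorem~\ref{theorem-10.1} bounds $(\average_{\mathcal Q}|\nabla u_\varep|^p)^{1/p}$ by $\frac{C}{\delta}(\average_{Q_{\delta/2}(z,\tau)}|u_\varep|^2)^{1/2}$, and since $Q_{\delta/2}(z,\tau)\subset\Omega_{3\delta/2}(z^*,\tau)$ for the nearest boundary point $z^*$, the H\"older decay bounds this by $C\delta^{\alpha-1}$. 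If $\delta<8\varep$, let $z^*\in\partial\Omega$ be nearest to $z$ and set $w(x,t)=u_\varep(z^*+\varep x,\tau+\varep^2 t)$; by the rescaling property (\ref{rescaling}), $w$ solves --- in a region containing $D_2(\psi_{z^*,\varep})$ and vanishing on $I_2(\psi_{z^*,\varep})$ --- a parabolic system whose coefficient matrix is a translate of $A$ (hence has the same $VMO_x$ modulus), where $\psi_{z^*,\varep}(x')=\varep^{-1}\big(\psi(z^{*\prime}+\varep x')-z^*_d\big)$ again satisfies (\ref{condition-psi}) with the same $(M_0,\eta)$. Applying the boundary $W^{1,p}$ estimate of \cite{Byun-2007} to $w$ on $D_2(\psi_{z^*,\varep})$, then undoing the scaling (the half-cylinders $D_\rho(\psi_{z^*,\varep})$ correspond, up to constants depending on $M_0$, to cylinders $\Omega_{\rho\varep}(z^*,\tau)$), yields $(\average_{\Omega_{c\varep}(z^*,\tau)}|\nabla u_\varep|^p)^{1/p}\le\frac{C}{\varep}(\average_{\Omega_{C\varep}(z^*,\tau)}|u_\varep|^2)^{1/2}\le C\varep^{\alpha-1}$; here we take $\mathcal Q=\Omega_{c\varep}(z^*,\tau)$, which contains $(z,\tau)$ because $\delta<8\varep$.

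Finally I would choose a Whitney--type covering $\{\mathcal Q_j\}$ of $\Omega_{1/4}(0,0)$ by cylinders of the above type, of radii $\rho_j$ comparable to $\max\{\text{dist}(\cdot,\partial\Omega),\varep\}$ and with bounded overlap, and sum:
\[
\int_{\Omega_{1/4}(0,0)}|\nabla u_\varep|^p
\le C\sum_j|\mathcal Q_j|\,\rho_j^{-p(1-\alpha)}
=C\sum_j\rho_j^{\,d+2-p(1-\alpha)}.
\]
Grouping the $\mathcal Q_j$ by dyadic radius $\rho_j\sim 2^{-k}$ --- for which $2^{-k}\gtrsim\varep$, and, since $\partial\Omega$ is $(d-1)$-dimensional and the cylinders are parabolic, the layer $\{\text{dist}(\cdot,\partial\Omega)\sim 2^{-k}\}$ is covered by $O(2^{k(d+1)})$ of them --- the last sum is $\le C\sum_{k\ge 0}2^{-k[1-p(1-\alpha)]}\le C(p)$, the geometric series converging precisely because $\alpha>1-\frac1p$. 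Since $|\Omega_{1/4}(0,0)|$ and $|\Omega_2(0,0)|$ are comparable to positive constants depending on $\Omega$, this gives $(\average_{\Omega_{1/4}(0,0)}|\nabla u_\varep|^p)^{1/p}\le C(\average_{\Omega_2(0,0)}|u_\varep|^2)^{1/2}$ after undoing the normalization; reversing the dilation (which produces the factor $1/r$) and the localizing rotation (which only affects the constant through $\Omega$) recovers (\ref{9.1-1}). The main obstacle, compared with the interior estimate, is exactly this: in a $C^1$ domain there is no pointwise (Lipschitz) control of $\nabla u_\varep$ near $\partial\Omega$, so one cannot close the argument by a single blow-up to scale $\varep$; instead one works with $L^p$ averages over a Whitney family and pushes the H\"older exponent $\alpha$ close enough to $1$, depending on $p$, that the sum over scales converges.
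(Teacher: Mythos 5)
Your proof is correct, and it rests on precisely the same two ingredients as the paper's: the interior $W^{1,p}$ estimate (Theorem \ref{theorem-10.1}) on cylinders of radius comparable to $\delta(\cdot)=\text{dist}(\cdot,\partial\Omega)$, and the boundary H\"older decay $|u_\varep|\lesssim\delta^\alpha$ from Theorems \ref{theorem-7.1}--\ref{theorem-7.2}, closed by choosing $\alpha>1-\tfrac1p$. The difference is one of packaging. The paper applies the interior estimate at scale $c\delta(x)$ to get $\int_{Q_{c\delta(x)}(x,t)}|\nabla u_\varep|^p\lesssim\int_{Q_{2c\delta(x)}(x,t)}|u_\varep/\delta|^p$, multiplies by $[\delta(x)]^{-d-2}$, integrates in $(x,t)$ over $\Omega_{1/2}(0,0)$, and uses Fubini to obtain $\average_{\Omega_{1/4}}|\nabla u_\varep|^p\lesssim\average_{\Omega_1}|u_\varep/\delta|^p\lesssim\int_{\Omega_1}\delta^{(\alpha-1)p}$, which is finite exactly when $(\alpha-1)p>-1$. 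Your Whitney covering and dyadic summation $\sum_k 2^{-k[1-p(1-\alpha)]}<\infty$ is the discrete incarnation of the same convergence condition, and the bookkeeping (number of $\rho\sim2^{-k}$ cylinders $\sim 2^{k(d+1)}$, volume $\sim 2^{-k(d+2)}$) is right. The one place you add unnecessary machinery is the case split $\delta\lessgtr 8\varep$ with a separate blow-up to scale $\varep$ and an appeal to Byun's boundary $W^{1,p}$ estimate for $\partial_t+\mathcal L_1$: the interior estimate of Theorem \ref{theorem-10.1} is uniform in $\varep$ and valid on cylinders of \emph{any} radius $\rho\le c\,\delta$, and the dyadic sum converges even if one lets $2^{-k}\to 0$ without truncating at $\varep$ — equivalently, $\int\delta^{(\alpha-1)p}<\infty$ with no lower cutoff. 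The $\varep$-scale blow-up is already encoded in the proof of Theorem \ref{theorem-7.2} (the boundary H\"older estimate), so in citing that result you have already used it; re-doing it explicitly is harmless but not needed.
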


\begin{proof}
By translation and dilation we may assume that $r=1$ and $(x_0, t_0)=(0,0)$.
It follows from the interior $W^{1,p}$ estimates in Theorem \ref{theorem-10.1}
that if $Q_{2\rho} (x,t)\subset \Omega_2 (0,0)$,
$$
\left(\average_{Q_\rho (x,t)} |\nabla u_\varep|^p \right)^{1/p}
\le \frac{C}{\rho} \left(\average_{Q_{2\rho} (x,t)} |u_\varep|^2\right)^{1/2}.
$$
This implies that if $(x,t)\in \Omega_{1/2} (0,0)$, then
\begin{equation}\label{9.1-3}
\int_{Q_{c\delta(x)} (x,t)} |\nabla u_\varep|^p
\le C \int_{Q_{2c\delta (x)} (x,t)} \left| \frac{u_\varep (y,s)}{\delta (y)}\right|^p \, dyds,
\end{equation}
where $\delta(y) =\text{dist}(y, \partial\Omega)$ and $c>0$ is sufficiently small.
By multiplying both sides of (\ref{9.1-3}) by $[\delta (x)]^{-d-2}$
and integrating the resulting inequality in $(x,t)$ over $\Omega_{1/2}(0, 0)$,
we obtain
\begin{equation}\label{9.1-5}
\average_{\Omega_{1/4} (0, 0)}
|\nabla u_\varep|^p 
\le C \average_{\Omega_1 (0, 0)} \left|\frac{u_\varep  (x,t)} {\delta (x)} \right|^p\, dxdt
\end{equation}
(see \cite[pp.2289-2290]{Shen-2008} for a similar argument in the elliptic case).

Finally, we note that by the boundary H\"older estimates in Theorem \ref{theorem-7.1},
$$
|u_\varep (x,t)|\le C \big[\delta (x)\big]^\alpha 
\| u_\varep\|_{L^2(\Omega_2 (0,0))}, \quad \text{ for any } (x,t)\in \Omega_1(0,0).
$$
This, together with (\ref{9.1-5}), gives
$$
\average_{\Omega_{1/4} (0,0)} |\nabla u_\varep|^p
\le C \average_{\Omega_1 (0,0)} |\delta (x)|^{(\alpha-1) p}\, dxdt \, \| u_\varep\|_{L^2(\Omega_2 (0,0))}^p
\le C\, \|u_\varep\|_{L^2(\Omega_2 (0,0))}^p,
$$
where we have chosen $\alpha\in (0,1)$ so that $(\alpha-1)p>-1$.
The proof is complete.
\end{proof}

\begin{proof}[\bf Proof of Theorem \ref{main-theorem-3}]
Let $2<p<\infty$.
It follows from (\ref{9.1-1}) and Lemma \ref{Poincare-lemma} that, 
if $\big(\partial_t +\mathcal{L}_\varep\big) u_\varep =0$
in $\Omega_{2r} (x_0, t_0)$ and $u_\varep =0$ on $\Delta_{2r} (x_0, t_0)$
for some $x_0\in \partial\Omega$ and $t_0\in \mathbb{R}$, where $0<r<\text{diam}(\Omega)$,
then
\begin{equation}\label{9.2-1}
\left(\average_{\Omega_{r/4} (x_0, t_0)} |\nabla u_\varep |^p\right)^{1/p}
\le C 
\left(\average_{\Omega_{2r} (x_0, t_0)} |\nabla u_\varep |^2\right)^{1/2}.
\end{equation}
Using the interior $W^{1,p}$ estimate in Theorem \ref{theorem-10.1} and some geometric 
consideration, it
is not hard to see that the estimate (\ref{9.2-1}) continues to hold if we replace the assumption
$x_0\in \partial\Omega$ by $x_0 \in \overline{\Omega}$.
The $W^{1,p}$ estimate (\ref{1.3-1}) now  follows from (\ref{9.2-1}) and standard
$L^2$ energy estimates by a real-variable argument, similar to that used in the proof of 
Theorem \ref{main-theorem-1}. We omit the details and refer the reader to \cite{Shen-2005-bounds, Geng} for a similar argument in the elliptic case.

Finally, we point out that if one replaces $Q_r (x_0, t_0)$ by $\Omega_r (x_0, t_0)$ and
$Q_{2r} (x_0, t_0)$ by $\Omega_{8r} (x_0, t_0)$, respectively,
the estimate in Lemma \ref{Poincare-lemma}
continues to hold for solutions of $\big(\partial_t +\mathcal{L} \big) u=\text{div} (f)$
in $\Omega_{8r} (x_0, t_0)$ and $u=0$ on $\Delta_{8r} (x_0, t_0)$, where $x_0\in \overline{\Omega}$.
To see this, we consider two cases: (1) $B(x_0, 2r)\cap \partial\Omega =\emptyset$; (2) 
$B(x_0, 2r)\cap \partial\Omega \neq \emptyset$.
Note that in the first case, where $B(x_0, 2r)\subset \Omega$,
 the desired estimate follows directly from Lemma \ref{Poincare-lemma}.
 In the second case we choose $y_0\in B(x_0, 2r)\cap \partial\Omega$ and use the estimate,
 $$
 \int_{\Omega_r (x_0, t_0)} |u -\average_{\Omega_r (x_0, t_0)} u |^2
  \le  \int_{\Omega_{3r} (y_0, t_0)} |u|^2
 \le C r^2 \int_{\Omega_{3r} (y_0, t_0)} |\nabla u|^2,
 $$
 where the last step follows from the Poincar\'e inequality as well as the assumption $u=0$ on $I_{8r} (x_0, t_0)$.
With these  observations, it is not hard to see that the boundary H\"older estimate (\ref{1.3-2})
follows from (\ref{1.3-1}), as in the interior case.
\end{proof}

\begin{proof}[\bf Proof of Theorem \ref{main-theorem-4}]
Let $u_\varep$ be a weak solution of $\big(\partial_t +\mathcal{L}_\varep \big) u_\varep
=F$ in $\Omega \times (0, T)$ and $u_\varep =0$ on the parabolic boundary
$\big[ \partial\Omega\times (0, T) \big] \cup \big[ \Omega \times \{ 0\}\big]$,
where $F\in L^p(0,T; W^{-1, p} (\Omega))$.
We need to show that
\begin{equation}\label{9.3-1}
\|\nabla u_\varep \|_{L^p(0,T; L^p(\Omega))}
\le C_p\, \| F\|_{L^p (0, T; W^{-1, p}(\Omega))}.
\end{equation}
By a simple duality argument we may assume that $p>2$.

We first consider the case where $F=\text{div} (f)$ for some $f=(f_i)\in L^p(0, T; L^p(\Omega))$.
We extend $u_\varep$ and $F$ by zero to $\Omega \times (-\infty, 0]$
 and $\partial\Omega \times (-\infty, 0]$,
respectively.
Note that since $u_\varep=0$ on the parabolic boundary of $\Omega\times (0,T)$,
$u_\varep$ is a weak solution of $\big(\partial_t +\mathcal{L}_\varep \big)u_\varep
=\text{div} (f) $ in $\Omega \times (-\infty, T)$ and
$u_\varep=0 $ on $\partial\Omega \times (-\infty, T)$.
This allows us to cover the set ${\Omega}\times (0, T)$
by a finite number  of  $\Omega_{r} (x_\ell, t_\ell)$ with the properties that $(x_\ell, t_\ell)\in
\overline{\Omega}\times [0, T]$ and  $r=c_0 \min (\text{diam}(\Omega), \sqrt{T})$, 
and apply the $W^{1,p}$  estimates in Theorems \ref{main-theorem-1} and \ref{main-theorem-3}
on each $\Omega_r (x_\ell, t_\ell)$.
It follows by summation that
$$
\|\nabla u_\varep\|_{L^p(0, T; L^p(\Omega))}
\le C \left\{ \| u_\varep\|_{L^2(0,T; L^2(\Omega))}
+\| f\|_{L^p(0, T; L^p(\Omega))} \right\}
\le C\| f\|_{L^p(0, T; L^p(\Omega))},
$$
where we have used the energy estimates as well as H\"older's inequality for the last inequality.

Finally, we note that if $F\in L^p(0, T; W^{-1, p}(\Omega))$, then $F =g +\text{div} (f)$, where
$g, f\in L^p(0, T; L^p(\Omega))$ and 
$$
\| g\|_{L^p(0, T;L^p (\Omega))}
+\| f\|_{L^p(0, T; L^p(\Omega))} \le C \| F\|_{L^p(0, T; W^{-1, p}(\Omega))}.
$$
Let $w$ be the solution of the heat equation $\big(\partial_t -\Delta \big) w =g$ in $\Omega\times (0, T)$
and $w=0$ on the parabolic boundary. Then
$$
\big(\partial_t +\mathcal{L}_\varep \big) (u_\varep -w) 
=\text{div} (f ) -\Delta w -\mathcal{L}_\varep (w).
$$
It follows that
$$
\aligned
\| \nabla u_\varep\|_{L^p(0, T; L^p(\Omega))} 
&\le \| \nabla (u_\varep -w )\|_{L^p(0, T; L^p(\Omega))} 
+\| \nabla w\|_{L^p(0, T; L^p(\Omega))}\\
& \le C \left\{ \| f\|_{L^p(0, T; L^p(\Omega))}
+\|\nabla w\|_{L^p(0, T; L^p(\Omega))}\right\} \\
& \le C \left\{ \| g\|_{L^p(0, T; L^p(\Omega))} 
+\| f\|_{L^p(0, T; L^p(\Omega))} \right\},
\endaligned
$$
where we have used the $W^{1,p}$ estimates for the heat equation in $C^1$ cylinders  in the last inequality.
 This completes the proof.
\end{proof}

\begin{remark}
{\rm 
By subtracting from $u_\varep$ a solution of $(\partial_t -\Delta ) w=0$ in $\Omega\times (0, T)$ with boundary data
$w=h$ on $\partial\Omega\times (0, T)$, one may handle the $W^{1,p}$ estimates
for the nonhomogenous initial-Dirichlet problem 
$\big (\partial_t +\mathcal{L} \big) u_\varep =F$ in $\Omega\times (0, T)$,
$u_\varep=h$ on $\partial\Omega \times (0, T)$, and $u_\varep =0$ on $\Omega \times \{ 0\}$.
We leave the details to the interested readers.
}
\end{remark}


\bibliography{gs1.bbl}

\medskip

\begin{flushleft}
Jun Geng, 
School of Mathematics and Statistics, 
Lanzhou University,
Lanzhou, P. R. China.

E-mail: gengjun@lzu.edu.cn
\end{flushleft}

\begin{flushleft}
Zhongwei Shen, 
 Department of Mathematics,
University of Kentucky,
Lexington, Kentucky 40506,
USA. 

E-mail: zshen2@uky.edu
\end{flushleft}

\medskip

\noindent \today

\end{document}